\numberwithin{equation}{section}
 \newtheorem{theorem}{Theorem}
\numberwithin{theorem}{section}
  \newtheorem{corollary}[theorem]{Corollary}
   \newtheorem{proposition}[theorem]{Proposition}
\theoremstyle{definition}
 \newtheorem{definition}[theorem]{Definition}
\theoremstyle{remark}
 \newtheorem{remark}[theorem]{Remark}
\def\bea{\begin{eqnarray}}
\def\eea{\end{eqnarray}}
\def\be{\begin{equation}}
\def\ee{\end{equation}}
\def\ba{\begin{align}}
\def\ea{\end{align}}
\def\bse{\begin{subequations}}
\def\ese{\end{subequations}}
\def\({\left(}
\def\){\right)}
\def\[{\left[}
\def\]{\right]}
\def\<{\left\langle}
\def\>{\right\rangle}
\newcommand{\IR}{\mathbb{R}}
\newcommand{\IC}{\mathbb{C}}
\newcommand{\IZ}{\mathbb{Z}}
\newcommand{\cB}{\mathcal{B}}
\newcommand{\cD}{\mathcal{D}}
\newcommand{\cI}{\mathcal{I}}
\newcommand{\cM}{\mathcal{M}}
\newcommand{\cP}{\mathcal{P}}
\newcommand{\cQ}{\mathcal{Q}}
\newcommand{\cZ}{\mathcal{Z}}
\newcommand{\I}{\mathrm{i}}
\newcommand{\de}{\mathrm{d}}
\DeclareMathOperator{\sign}{sign}
\DeclareMathOperator{\Erf}{Erf}
\DeclareMathOperator{\Erfc}{Erfc}
\DeclareMathOperator{\Arctan}{Arctan}
\DeclareMathOperator{\Pv}{Pv}
\renewcommand{\Im}{\operatorname{Im}}
\def\cl0{\tilde c_0}
\newcommand{\expe}[1]{{\bf e}\!\left( #1\right)}
\newcommand{\hf}{\frac{1}{2}}
\def\haf{\textstyle{1\over 2}}
\newcommand{\p}{\partial}
\newcommand{\pa}{\partial}
\newcommand{\nn}{\nonumber}
\newcommand{\bfb}{{ b}}
\newcommand{\bfc}{{ c}}
\newcommand{\bfk}{{ k}}
\newcommand{\bfp}{{ p}}
\newcommand{\bfx}{{ x}}
\newcommand{\bfmu}{{ \mu}}
\newcommand{\bfnu}{{ \nu}}
\newcommand{\non}{\nonumber}
\def\CYm{{{\mathfrak{Y}}}}
\newcommand{\Cn}{C} 
\newcommand{\Cudp}{C_3^\star}
\newcommand{\Cdtp}{C_1^\star}
\newcommand{\Ctup}{C_2^\star}
\def\bp#1{#1}
\begin{document}

\title[Indefinite theta series]{Indefinite theta series and generalized error functions\footnote{
Preprint: L2C:16-078, IPHT-T16/058, TCDMATH 16-09, CERN-TH-2016-142, arXiv:1606.05495}}

\author{Sergei Alexandrov}
\address{Laboratoire Charles Coulomb (L2C), UMR 5221 CNRS-Universit\'e de
Montpellier, F-34095, Montpellier, France}
\email{salexand@univ-montp2.fr}

\author{Sibasish Banerjee}
\address{IPhT, CEA, Saclay, Gif-sur-Yvette, F-91191, France}
\email{sibasish.banerjee@cea.fr}

\author{Jan Manschot}
\address{School of Mathematics, Trinity College, Dublin 2, Ireland}
\email{manschot@maths.tcd.ie}

\author{Boris Pioline}
\address{CERN, TH Department, Case C01600, CERN, CH-1211 Geneva 23, Switzerland}
\address{Sorbonne Universit\'e, Campus Pierre et Marie Curie, UMR 7589, LPTHE, F-75005, Paris, France}
\address{CNRS, UMR 7589, LPTHE, 4 place Jussieu, F-75005, Paris, France}

\email{pioline@lpthe.jussieu.fr}

\begin{abstract}
Theta series for lattices with indefinite signature $(n_+,n_-)$ arise in many areas of mathematics
including representation theory and enumerative algebraic geometry. Their
modular properties are well understood in the Lorentzian case ($n_+=1$),
but have remained obscure when $n_+\geq 2$. Using a higher-dimensional generalization
of the usual (complementary) error function, discovered in an independent physics project,
we construct the modular completion of a class of `conformal' holomorphic theta series ($n_+=2$).
As an application, we determine the modular properties of a generalized Appell-Lerch sum attached
to the lattice ${\operatorname A}_2$, which arose in the study of rank 3 vector bundles on $\mathbb{P}^2$.
The extension of our method to $n_+>2$ is outlined.
\end{abstract}

\maketitle


\section{Introduction}\label{sec:intro}

Theta series appear in a variety of subjects in mathematics and physics and
provide a large class of functions exhibiting modular properties.
Theta series for lattices with \bp{negative} definite
signature\footnote{\bp{Throughout this work we use admittedly unusual
    sign conventions in which a negative definite quadratic form leads
    to a convergent holomorphic theta series. Correspondingly, a
    time-like vector has positive norm whereas a space-like vector has
    negative norm.}} are well-known examples of holomorphic modular forms.
While holomorphic theta series for lattices with indefinite signature have been
studied since Hecke \cite{zbMATH02591849}, their modular properties
 are \bp{not well understood in general}. 
 Motivated by Donaldson invariants of four-manifolds, G\"ottsche and Zagier \cite{MR1623706} studied holomorphic theta series obtained by summing over \bp{cones in} 
 lattices with Lorentzian signature $(1,n-1)$. 
Zwegers \cite{Zwegers-thesis} succeeded in determining their modular properties
by constructing a non-holomorphic modular completion, \bp{laying down the basis for} the modern understanding of Ramanujan's mock theta
functions \cite{MR2605321, Zwegers-thesis}. \bp{An alternative route has been followed by Kudla
and Millson \cite{KudlaMillson},  who constructed cohomological indefinite theta
 series for any signature $(n_+,n_-)$, which are holomorphic in  cohomology.}

An important open question is to understand the modularity of \bp{scalar-valued
indefinite theta series for
lattices in general signature.}  In this work we investigate  a general class of convergent 
theta series for lattices with signature $(2,n-2)$. Since $O(2,n-2)$ is
the group of conformal symmetries of $\IR^{1,n-3}$, we refer to these objects as
`conformal theta series'. Holomorphic conformal theta series are obtained by
introducing an appropriate (locally constant) kernel $\Phi_2(x)$ in the sum, which restricts it to a subset of the lattice where the quadratic form is \bp{negative}.
We obtain the modular completion  by replacing $\Phi_2(x)$ with
a smooth kernel $\widehat \Phi_2(x)$ which asymptotes to $\Phi_2(x)$
exponentially fast in the limit $|x|\to \infty$ and satisfies the assumptions of Vign\'eras' theorem \cite{Vigneras:1977}.
The kernel $\widehat \Phi_2(x)$ is expressed in terms of special functions
$E_2$ and $M_2$ generalizing the usual (complementary) error
function. For special choices of parameters, $\widehat \Phi_2(x)$ may factorize into
a product of two kernels $\widehat \Phi_1$ for Lorentzian lattices. For such degenerate cases recent results were obtained in
\cite{BRZ2015, Mortenson:2016, WRaum:2012}. The
functions $E_2$ and $M_2$ were discovered in our study of instanton effects in string theory using twistor techniques
\cite{abmp-to-appear}. No familiarity with these subjects is however required for the present
paper. While we focus on the conformal case for most of this work, in the last section we
introduce higher dimensional versions of $E_2$ and $M_2$, which can be used
to determine the modular completion  of indefinite theta series for lattices
with arbitrary signature $(n_+,n_-)$.

This progress on  indefinite theta series allows us to address the modularity of another
important class of $q$-series with a wide range of applications in
mathematics and mathematical physics,  namely Appell-Lerch sums. The
classical Appell-Lerch sum introduced in \cite{Appell:1886, Lerch:1892}
is well-known to be related to a Lorentzian theta series, which allows
to deduce its modular properties \cite{Zwegers-thesis}. Generalized
Appell-Lerch sums involving denominators of higher degree have appeared in the
context of topological invariants of moduli spaces of vector bundles on
4-manifolds \cite{Manschot:2014cca}. Upon expanding the factors in the denominator into
geometric series, they can be recast as indefinite theta series for lattices with signature
$(n_+,n_-)$. Our techniques can be used to characterize the modular
properties of such generalized Appell-Lerch sums. As a demonstration of the power of our
approach, we determine the modular completion of the generalized Appell-Lerch
sum attached to the lattice ${\operatorname A}_2$, which arises in the study of rank 3 vector bundles
on $\mathbb{P}^2$.

In the rest of this introduction, we review Appell-Lerch sums and their generalization.
Moreover, we give a brief summary for the interested reader
on the appearance of indefinite theta series in the context of gauge theory and vector bundles,
and on the physical background which led to the new techniques described in this work.
(The hurried reader can however safely skip to Section \ref{sec_vz}.)

\subsection{Indefinite theta series and Appell-Lerch sums}

As mentioned above, a class of functions which are closely related to theta series for Lorentzian lattices
are Appell-Lerch sums \cite{Appell:1886, Lerch:1892}.
The classical Appell-Lerch sum is a \bp{function of $\tau\in\mathbb{H}$ in the Poincar\'e upper half-plane  and 
$u,v\in \mathbb{C}\backslash(\tau \mathbb{Z}+\mathbb{Z})$, holomorphic in $\tau$ and meromorphic in $u$ and $v$, defined by}
\be
\label{ALsum}
\mu(u,v,\tau) :=
\frac{e^{\pi\I u}}{\theta_{1}(v,\tau)}\sum_{n\in \mathbb{Z}} \frac{(-1)^nq^{n(n+1)/2}e^{2\pi\I v n}}{1-e^{2\pi\I u}q^n},
\ee
where \bp{$q:=e^{2\pi\I\tau}$} and $\theta_1(v,\tau)$ is the Jacobi theta function
\be
\theta_1(v,\tau):=\I\sum_{r\in \mathbb{Z}+\frac{1}{2}}(-1)^{r-\frac{1}{2}}q^{\frac{r^2}{2}}e^{2\pi \I vr}.
\ee
This function plays a central role in the theory of $q$-series, and
in particular in  recent advances on mock modular forms
\cite{Zwegers-thesis}. Furthermore, the coefficients of $\mu(u,v)$ are
related to various enumeration problems. For example, specializations
of $\mu(u,v)$ are known to enumerate partition statistics (in
particular the rank of the partition \cite{MR2605321}) and
class numbers, i.e. the numbers of equivalence classes of binary
quadratic forms with fixed determinant \cite{bringmann:2007,
  Bringmann:2010sd, Zagier:1975}. The classical Appell-Lerch sum also appears in various other
areas of mathematics and physics, including the representation theory of superconformal algebras
 \cite{Eguchi:1988af, Kac:2001}, mirror symmetry for elliptic
curves \cite{Polishchuk:2001} and black hole counting in string theory
\cite{Dabholkar:2012nd}. It also shows up in the context of
gauge theories as reviewed in the next subsection.

The form of $\mu(u,v)$ naturally lends itself to higher-dimensional generalizations.
To state the most general form, we consider a \bp{negative} definite lattice of dimension $n_-$
with associated quadratic form $\mathcal{Q}$ and bilinear form $\mathcal{B}$.
Let $\{m_i\}$, $i=1,\dots,n_+$, be a set of $n_+$ vectors in $\mathbb{Z}^{n_-}$;
let $u\in\mathbb{C}^{n_+}\backslash
(\tau \mathbb{Z}^{n_+}+\mathbb{Z}^{n_+})$ and $v\in \mathbb{C}^{n_-}\backslash ({\rm zeroes\,\,of\,\,}\Theta_{\mathcal{Q}}(v,\tau))$.
Then we define the generalized Appell-Lerch sum by
\be
\label{genAppell}
\mu_{\mathcal{Q},\{m_i\}}(u,v,\tau):=\frac{e^{\pi\I \sum_{j=1}^{n_+}u_j}}{\Theta_{\mathcal{Q}}(v,\tau)}
\sum_{k\in \mathbb{Z}^{n_-}}\frac{q^{\bp{-} \tfrac12\mathcal{Q}(k)}e^{2\pi\I \mathcal{B}(v,k)}}
{\prod_{j=1}^{n_+}(1-e^{2\pi\I u_j} q^{\bp{-} \mathcal{B}(m_j,k)})},
\ee
with
\be
\label{Theta}
\Theta_{\cQ}(v,\tau):=\sum_{k\in \mathbb{Z}^{n_-}} q^{\bp{-}\tfrac12\cQ(k)} e^{2\pi \I \cB(v,k)}.
\ee
We refer to the pair $(n_+,n_-)$ as the ``signature'' of the Appell-Lerch sum,
since it is the signature of the quadratic form  obtained after expanding the $n_+$ factors
in the denominator of \eqref{genAppell} into  geometric sums.

When $n_+=1$, these functions reduce to the multi-variable Appell-Lerch sums studied in
\cite{Zwegers:2010}. For $n_+>1$, functions similar to $\mu_{\mathcal{Q},\{m_i\}}(u,v,\tau)$
appear as characters of affine superalgebras \cite[Eq. (0.13)]{Kac:2013}
and as generating functions of Gromov-Witten invariants of elliptic orbifolds \cite{BRZ2015}.
They also occur in the study of vector bundles on rational surfaces
\cite[Section 5]{Manschot:2014cca}.
In the latter case, the vectors $\{m_i\}$ are such that $\mu_{\mathcal{Q},\{m_i\}}(u,v)$
does not reduce to products of classical Appell-Lerch sums.

The classical Appell-Lerch sum can be viewed as an indefinite theta function for
a signature $(1,1)$ lattice as in Section \ref{secLorentzian}, with one of the reference vectors $C'$
taken to be null. Its modular completion $\widehat \mu(u,v)$ then
follows from Zwegers' results on indefinite theta functions \cite{Zwegers-thesis}.
The techniques discussed in Sections \ref{derror} and \ref{2n-2} also allow us to
determine the modular properties of the generalized  Appell-Lerch sums \eqref{genAppell}
for $n_+=2$. In Section \ref{genappell} we illustrate this for a specific example where
$\mathcal{Q}$ corresponds to the quadratic form of the ${\rm A}_2$ root lattice.
Following the procedure outlined in Section \ref{sec_gen} one may also determine the modular completion for $n_+>2$.

\subsection{Indefinite theta series, vector bundles and gauge theory}

An important motivation and source of insight for the theory of indefinite
theta series is the context of vector bundles on 4-manifolds
\cite{Gottsche:1999ix, MR1623706, Manschot:2014cca}, which we now
review briefly. Given a polarization $J$ of a smooth algebraic surface $S$, one considers
the moduli space $\mathcal{M}_J(\gamma)$ of vector bundles with fixed Chern
character $\gamma=(r,c_1, c_2)$ which are semi-stable
with respect to $J$. In favorable
circumstances, these moduli spaces are smooth and projective, and
one can consider their topological invariants such as the Poincar\'e
polynomial of $\mathcal{M}_J(\gamma)$ or the Donaldson invariant.

To explain the appearance of indefinite theta series in this context,
let $\Omega(\gamma,w;J)$ be the following variant of
the Poincar\'e polynomial of $\mathcal{M}_J(\gamma)$:
\be
\Omega(\gamma,w;J) :=w^{-d(\gamma)/2}\sum_{j=0}^{d(\gamma)}
b_{j}\bp{(\mathcal{M}_J(\gamma))}\, w^{j},
\ee
where  $b_i(X)=\dim H^i(X)$ is the $i$'th Betti number of $X$, and $d(\gamma)=\dim(\mathcal{M}_J(\gamma))$.
It is natural to consider generating functions of the
$\Omega(\gamma, w;J)$'s, holding the rank $r$ and first Chern class
$c_1$ fixed, and summing over the second Chern class $c_2$,
\be
h_{r,c_1}(z,\tau;J):=\sum_{c_2\in{\mathbb{N}}} \Omega(\gamma,w;J)\,q^{c_2-\delta},
\ee
with $w=e^{2\pi\I z}$ \bp{and $\delta$ a suitable rational constant}. 
For $w=-1$, these generating functions are
also known as Vafa-Witten partition functions, since they
appear in the path integral of topologically twisted
$\mathcal{N}=4$ supersymmetric gauge theory with gauge group $U(r)$
\cite{Vafa:1994tf}. The path integral in general however involves  additional
non-holomorphic terms (except in special
cases, e.g. when $S$ is a
K3 surface). Invariance under electric-magnetic duality predicts
that the path integral transforms as a modular form, and thus in turn
that $h_{r,c_1}(z,\tau;J)$ possesses modular properties. However the
non-holomorphic terms are poorly understood and the precise
modular properties of $h_{r,c_1}(z,\tau;J)$ are in general obscure.

A class of algebraic surfaces for which $h_{r,c_1}(z,\tau;J)$ can be
determined explicitly are the rational surfaces whose second homology
group is a lattice $\Lambda$ of Lorentzian signature. For these surfaces in rank 2,
$h_{2,c_1}(z,\tau;J)$ was found to involve an indefinite theta function,
which sums over a subset of $\Lambda$ determined by $J$
\cite{Gottsche:1999ix, MR1623706}. For rank $r>2$,
$h_{r,c_1}(z,\tau;J)$ can be expressed in terms of indefinite theta
functions for lattices of signature $(n_+,n_-)$ with
$n_+\leq r-1$ \cite{Manschot:2010nc, Manschot:2014cca, Toda:2014}. Specializing to the projective plane $S=\mathbb{P}^2$ and $r=2$, the theta
series can be brought to the form of the classical Appell-Lerch sum
\cite{Bringmann:2010sd, Yoshioka:1994} using the blow-up formula. For $r>2$, $h_{r,c_1}(z,\tau;J)$
can be instead expressed in terms of generalized
Appell-Lerch sums (\ref{genAppell}) \cite{Manschot:2014cca}. The main building block for $r=3$,
is a generalized Appell-Lerch sum based on the ${\rm A}_2$ root lattice,
for which various identities were proven in \cite{BMR2015}.
We determine its modular completion in Section \ref{genappell}. The
application of this modular completion to the functions $h_{3,c_1}(z,\tau;J)$
will be discussed elsewhere \cite{U3toappear}.

\subsection{Physical background}

The main building blocks used to construct completions of theta series of signature $(2,n-2)$
are the `double error' functions $M_2$ and $E_2$. These functions were
found by studying the following physics problem:
compute D-instanton corrections to the hypermultiplet moduli space
$\cM_H$ in type II string theory compactified on a compact Calabi-Yau three-fold $\CYm$ (see e.g.
\cite{Alexandrov:2011va,Alexandrov:2013yva} for surveys of recent progress on this issue).
In the context of type IIB theory, a subset of these D-instantons consists of D3-branes wrapping
a divisor $\cD$, bound to an arbitrary number of D1-branes and D(-1) instantons.
The S-duality symmetry of type IIB string theory requires these instanton corrections to be modular invariant,
while supersymmetry demands that they should be encoded in terms of a modular completion of some holomorphic theta series
on the twistor space $\cZ$ associated to $\cM_H$. In the one-instanton approximation, it was shown
in \cite{Alexandrov:2012au} that these theta series are indefinite theta
series of Lorentzian signature $(1,b_2(\CYm)-1)$ \`a la Zwegers. In \cite{Alexandrov:2016tnf,abmp-to-appear},
we consider two-instanton contributions to the metric, where wall-crossing phenomena start playing
a r\^ole,
and find that double D3-instantons are described by the completion of holomorphic theta series of conformal signature
$(2,2b_2(\CYm)-2)$, of the type  analyzed in this paper. The special functions $M_2$ and $E_2$  originate as Penrose-type integrals
of holomorphic transition functions on $\cZ$. The present work, however, does not assume any familiarity with D-instantons or twistors.

\subsection{Outline}

In \S\ref{sec_vz}, we recall Vign\'eras' theorem, which is the main tool in our analysis, and
Zwegers' construction of holomorphic theta series for Lorentzian lattices. As a warm up for
the higher signature case, we provide suggestive integral representations of
the error functions $E_1$ and $M_1$ which appear prominently in Zwegers' work.
In \S\ref{sec_m2e2}, we introduce special
functions $M_2$ (a solution of Vign\'eras' equation on $\IR^2$ with discontinuities on real codimension one loci,
exponentially \bp{decreasing}  at infinity) and $E_2$  (a smooth solution of Vign\'eras' equation on $\IR^2$,
locally constant at infinity), and use them to construct solutions of Vign\'eras' equation on
$\IR^{2,n-2}$. In \S\ref{sec_confth}, we use these building blocks to construct holomorphic
theta series for lattices with signature $(2,n-2)$, parametrized by
two pairs of time-like, \bp{positive definite} vectors,
and find their modular completion.  In \S\ref{sec_appell}, we apply this technology to the generalized
Appell-Lerch sum which arises in the study of vector bundles with rank 3 on $\mathbb{P}^2$.
In \S\ref{sec_gen}, we outline the extension of our method to signature $(n_+,n_-)$ with $n_+>2$,
and as a first step, construct the triple error functions $E_3$ and $M_3$ relevant for
the case $n_+=3$.

\subsection{Acknowledgments} J.~M. thanks Kathrin Bringmann, Thomas Creutzig, Robert Osburn,
Larry Rolen, Martin Westerholt-Raum, Don Zagier and Sander Zwegers for discussions
about the generating functions derived in \cite{Manschot:2010nc, Manschot:2014cca}. 
B.~P. is grateful to Trinity College Dublin for hospitality during part of this work.
\vspace{.3cm}
\\
\bp{
\noindent {\it Historical note}: After posting the first version of this manuscript on arXiv, 
we were informed that similar results to ours for signature $(2,n-2)$ have been obtained independently 
by Zagier and Zwegers in unpublished work from 2003, and more recently by 
Martin Westerholt-Raum \cite{Raum:2016}. 
In upcoming work, Zagier and Zwegers plan to discuss the modularity of indefinite theta series for lattices with general signature. 
The extension of the functions $E_2(C_1,C_2;x)$ and $M_2(C_1,C_2;x)$ to the case $\Delta_{12}=0$
discussed around \eqref{E2Delta0}, and the discussion of the case $C_1=C_2$ in Section \ref{sec_sig21}, were added in the second release of this work, following a suggestion by Zagier. 
The relation of our conformal theta series to the cohomological theta series of \cite{KudlaMillson}
was clarified  by Kudla 
\cite{Kudla:2016} who also provided weaker convergence conditions for
 the theta series $\vartheta_{\bfmu}[\Phi_2,0]$ than those stated in Theorem \ref{convergence2n}. 
 The construction of generalized error functions to arbitrary
signature along the lines suggested in Section 6 was worked out by Nazaroglu \cite{Nazaroglu:2016}, and the
geometry underlying the corresponding theta series was spelled out 
by Funke and Kudla \cite{FunkeKudla:2016}. Several works since then
have applied this
construction to find the modular completion of various $q$-series.
}

\section{Vign\'eras' theorem and Lorentzian theta series \label{sec_vz}}

In this section, we recall Vign\'eras' theorem \cite{Vigneras:1977}, which provides a general framework
for indefinite theta series of arbitrary signature. We then review Zwegers' construction of holomorphic
theta series of signature $(1,n-1)$, and discuss some useful integral representations of the
error and complementary error functions $E_1(u)=\Erf(u\sqrt{\pi})$ and
$M_1(u)=-\sign(u)\, \Erfc(|u|\sqrt{\pi})$ which play a central role
in obtaining the modular completion of these theta series.

\subsection{Vign\'eras' theorem}

Let $\Lambda$ be an $n$-dimensional lattice equipped with a symmetric bilinear form
$B(x,y)$, where $x,y\in \Lambda \otimes \mathbb{R}$, such that its associated quadratic form $Q(x)=B(x,x)$
has signature $(n_+,n_-)$ and is integer valued, i.e. $Q(k)\in\IZ$ for $k\in\Lambda$.
Furthermore, let $\bfp\in\Lambda$  be a characteristic vector
(such that $Q(\bfk)+B(\bfk,\bfp)\in 2\mathbb{Z}$, $\forall \,\bfk \in \Lambda$),
$\bfmu\in\Lambda^*/\Lambda$ a glue vector, and $\lambda$ an arbitrary integer.
With the usual notations \bp{$\expe{x}:=e^{2\pi\I x}$,  
$q=\expe{\tau}$,} $\tau=\tau_1+\I\tau_2\in \mathbb{H}$ and
$b,c\in \Lambda\otimes \mathbb{R}$, we consider the following family of theta series
\be
\label{Vignerasth}
\vartheta_{\bfmu}[\Phi,\lambda](\tau, \bfb, \bfc):=\tau_2^{-\lambda/2} \sum_{{\bfk}\in \Lambda+\bfmu+\hf\bfp}
(-1)^{B(\bfk,\bfp)}\,\Phi(\sqrt{2\tau_2}(\bfk+\bfb))\, q^{-\frac12 Q(\bfk+\bfb)}\,\expe{B(\bfc,\bfk+\haf\bfb)}
\ee
defined by the kernel $\Phi(\bfx)$ \bp{on $\Lambda\otimes\mathbb{R}$}. 
The theta series \eqref{Vignerasth} is independent
of the choice of characteristic vector $p$ so we omit it in the notation.
We choose the kernel $\Phi(\bfx)$ so that
$f(\bfx):=\Phi(\bfx)\,e^{\frac{\pi}{2}Q(\bfx)}\in L^1(\Lambda\otimes\mathbb{R})$,
which ensures the absolute convergence of the sum.
For any such $\Phi$ and $\lambda\in\IZ$, \eqref{Vignerasth} satisfies the
following quasi-periodicity properties
\be
\begin{split}
\label{Vigell}
\vartheta_{\bfmu}[\Phi ,\lambda]\left(\tau, \bfb+\bfk,\bfc\right) =&\,(-1)^{B(\bfk,\bfp)}\,
\expe{-\haf B(\bfc,\bfk)} \vartheta_{\bfmu}[\Phi,\lambda]\left(\tau, \bfb,\bfc\right),
\\
\vphantom{A^A \over A_A}
\vartheta_{\bfmu}[\Phi, \lambda]\left(\tau, \bfb,\bfc+\bfk \right)=&\,(-1)^{B(\bfk,\bfp)}\,
\expe{\haf B(\bfb, \bfk)} \vartheta_{\bfmu}[\Phi ,\lambda]\left(\tau, \bfb,\bfc\right).
\end{split}
\ee

Now let us require that $\Phi,\lambda$ satisfy the following two conditions:
\begin{enumerate}
\item[i)]
Let $D(\bfx)$ be any differential operator of order $\leq 2$, and
$R(\bfx)$ any polynomial of degree $\leq 2$. Then \bp{$f(\bfx)$}
is such that $f(\bfx)$, $D(\bfx)f(\bfx)$ and $R(\bfx)f(\bfx)\in$
$L^2(\Lambda\otimes\mathbb{R})\bigcap L^1(\Lambda\otimes\mathbb{R})$.
\item[ii)]
$\Phi(\bfx)$ and $\lambda$ satisfy
\be
\label{Vigdif}
\left[ B^{-1}(\partial_{\bfx},\partial_{\bfx})+ 2\pi \bfx\pa_{\bfx}  \right] \Phi(\bfx)  = 2\pi \lambda\,  \Phi(\bfx),
\ee
where $B^{-1}$ is the bilinear form on the dual lattice $\Lambda^*$,
whose matrix representation is the inverse of the matrix representation of $B$,
\bp{ $B^{-1}(\partial_{\bfx},\partial_{\bfx})$ is the Laplace operator and 
and $x\partial_x$ is the Euler operator.}
\end{enumerate}

\begin{theorem}[\cite{Vigneras:1977}]
Under the conditions i) and ii), the theta series $\vartheta_{\bfmu}[\Phi,\lambda](\tau, \bfb, \bfc)$
transforms as a vector-valued Jacobi form of weight
\bp{$\lambda+\tfrac{n}{2}$}. Namely, it satisfies \eqref{Vigell} and 
\be
\begin{split}
\vartheta_{\bfmu}[\Phi,\lambda]\left( -1/\tau, \bfc,-\bfb\right)
=&\,\frac{(-\I\tau)^{\lambda+\frac{n}{2}}}{\sqrt{|\Lambda^*/\Lambda|}}\,
\expe{ \bp{\tfrac{Q(\bfp)+\lambda+n_+}{4}}} \sum_{\bfnu\in\Lambda^*/\Lambda}
\expe{B(\bfmu,\bfnu)}
\vartheta_{\bfnu}[\Phi,\lambda]\left(\tau, \bfb,\bfc\right),
\\
\vartheta_{\bfmu}[\Phi,\lambda] \left(\tau+1, \bfb,\bfc+\bfb\right)
=&\,\expe{-\tfrac12 Q(\bfmu+\tfrac12 \bfp)}
\vartheta_{\bfmu}[ \Phi,\lambda] \left( \tau, \bfb,\bfc\right).
\end{split}
\label{eq:thetatransforms}
\ee
\end{theorem}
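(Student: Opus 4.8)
The plan is to reduce the statement to the two modular transformations in \eqref{eq:thetatransforms}, since these together with the elliptic relations \eqref{Vigell} are exactly what it means to be a vector-valued Jacobi form of the asserted weight, and since $S:\tau\mapsto-1/\tau$ and $T:\tau\mapsto\tau+1$ generate $SL(2,\IZ)$. The relations \eqref{Vigell} hold for every admissible kernel---they follow from a shift $\bfk\mapsto\bfk-\bfk_0$ of the summation variable and use neither condition i) nor ii)---so I would simply quote them and focus on the two generators (the cocycle consistency being automatic once these are verified). Condition i) will enter only to license the analytic manipulations below (absolute convergence, Poisson resummation, Fubini, Fourier inversion), while condition ii) is what pins the weight.

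The $T$-transformation is the easy half, treated by direct substitution. Replacing $\tau\mapsto\tau+1$ and $\bfc\mapsto\bfc+\bfb$ in \eqref{Vignerasth} leaves $\tau_2$, the prefactor $\tau_2^{-\lambda/2}$ and the argument $\sqrt{2\tau_2}(\bfk+\bfb)$ of $\Phi$ untouched, and multiplies the summand by $\expe{-\hf Q(\bfk+\bfb)}\,\expe{B(\bfb,\bfk+\hf\bfb)}=\expe{-\hf Q(\bfk)}$, a factor independent of $\bfb$ and $\bfc$. Writing $\bfk=\bfmu+\hf\bfp+\ell$ with $\ell\in\Lambda$, the phase $\expe{-\hf Q(\bfk)}$ factorizes, and by integrality of $Q$ on $\Lambda$ together with the characteristic-vector condition $Q(\ell)+B(\ell,\bfp)\in2\IZ$ the $\ell$-dependent part collapses to $(-1)^{2B(\ell,\bfp)}=1$, leaving precisely the overall multiplier $\expe{-\hf Q(\bfmu+\hf\bfp)}$ of the second line of \eqref{eq:thetatransforms}. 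This step is pure bookkeeping.

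The substance is the $S$-transformation, which I would obtain by Poisson resummation over the shifted lattice $\Lambda+\bfmu+\hf\bfp$. Writing the summand as $g_\tau(\bfk+\bfb)\,\expe{B(\bfc,\bfk+\hf\bfb)}$ with the dressed kernel $g_\tau(\bfx)=\Phi(\sqrt{2\tau_2}\,\bfx)\,e^{-\pi\I\tau Q(\bfx)}$, condition i) lets me write $\Phi=f\,e^{-\frac{\pi}{2}Q}$ with $f\in L^1(\Lambda\otimes\IR)$, whence $|g_\tau(\bfx)|=|f(\sqrt{2\tau_2}\,\bfx)|$; thus the series converges absolutely and Poisson summation is legitimate despite the indefiniteness of $Q$. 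The resummation converts the sum over $\Lambda$ into one over $\Lambda^*$, the $\bfc$-twist shifting the dual variable so that the elliptic arguments exchange roles as $(\bfb,\bfc)\mapsto(\bfc,-\bfb)$ (the position/momentum swap), while the half-integral shift $\hf\bfp$ produces the phase $\expe{\frac14 Q(\bfp)}$ upon completing the square, the sum $\sum_{\bfnu}\expe{B(\bfmu,\bfnu)}$ over $\Lambda^*/\Lambda$, and the normalization $|\Lambda^*/\Lambda|^{-1/2}$. Everything then hinges on the Fourier-transform identity for the dressed kernel,
\[
\widehat{g_\tau}(\xi)=(-\I\tau)^{\lambda+\frac{n}{2}}\,g^{*}_{-1/\tau}(\xi),\qquad g^{*}_{\tau'}(\xi)=\Phi\!\left(\sqrt{2\tau'_2}\,\xi\right)e^{-\pi\I\tau' Q^{-1}(\xi)},
\]
the hat denoting the Fourier transform with respect to $B$, $Q^{-1}$ the dual quadratic form, and $\tau'_2=\Im(-1/\tau)$.

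Proving this Fourier identity is the main obstacle, and it is here that \eqref{Vigdif} is indispensable. After the rescaling $\bfx\mapsto\bfx/\sqrt{2\tau_2}$ the identity becomes a Gaussian Fourier transform of $\Phi$ against $e^{-\pi\I\tau Q(\bfx)/(2\tau_2)}$ with a linear source $B(\bfx,\xi)$, which I would evaluate by completing the square in the exponent. The differential equation is exactly what guarantees that the result reassembles into the dual dressed kernel $g^{*}_{-1/\tau}$ rather than some unrelated function: it forces $\Phi$ to behave homogeneously of effective degree $\lambda$ under the combined action of the Laplacian $B^{-1}(\p_{\bfx},\p_{\bfx})$ and the Euler operator $\bfx\p_{\bfx}$, so that the Jacobian produced by the inversion carries the exponent $(-\I\tau)^{\lambda+n/2}$---the $n/2$ being the dimension of the Gaussian transform and the $\lambda$ the eigenvalue in \eqref{Vigdif}. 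Conceptually, \eqref{Vigdif} states that $\Phi$ is a weight-$(\lambda+\frac{n}{2})$ eigenvector for the maximal compact of the Weil (oscillator) representation of the metaplectic cover of $SL(2,\IR)$ on $L^2(\Lambda\otimes\IR)$, and the transformation law is then the intertwining of the theta map with the metaplectic action; this is the structural reason the two generators close with the asserted weight and multiplier system. The only residual issues are analytic---differentiation under the integral, Fubini, and the validity of Poisson summation for an indefinite Gaussian---which are precisely what the $L^1\cap L^2$ hypotheses on $f$, $Df$ and $Rf$ in condition i) are tailored to supply.
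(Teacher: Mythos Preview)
The paper does not prove this theorem: it is quoted from Vign\'eras \cite{Vigneras:1977} and stated without proof, serving as the background tool on which the rest of the paper relies. There is therefore no ``paper's own proof'' to compare against.

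That said, your sketch is the standard route and is essentially how Vign\'eras' original argument proceeds: the elliptic relations and the $T$-transformation are bookkeeping, while the $S$-transformation is Poisson summation together with a Fourier-transform identity for the dressed kernel $g_\tau(\bfx)=\Phi(\sqrt{2\tau_2}\,\bfx)\,e^{-\pi\I\tau Q(\bfx)}$, and condition ii) is precisely the statement that $\Phi$ sits in the correct $K$-type of the Weil representation so that the Fourier transform of $g_\tau$ reproduces $g_{-1/\tau}$ up to the weight factor $(-\I\tau)^{\lambda+n/2}$. Your identification of where each hypothesis enters is accurate: i) for the analytic legitimacy of Poisson summation and Fourier inversion in the presence of an indefinite form, ii) for the weight.

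One small computational slip: in the $T$-step you say the $\ell$-dependent part ``collapses to $(-1)^{2B(\ell,\bfp)}=1$''. The actual $\ell$-dependent phase is $\expe{-B(\bfmu,\ell)-\tfrac12 B(\bfp,\ell)-\tfrac12 Q(\ell)}$; the first piece is $1$ because $\bfmu\in\Lambda^*$, and the remaining piece is $1$ because $\tfrac12(Q(\ell)+B(\bfp,\ell))\in\IZ$ by the characteristic-vector condition. The conclusion is the same, but the expression $(-1)^{2B(\ell,\bfp)}$ is not what appears.
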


\begin{remark} \label{rk_hol}  
The elliptic and modular transformations above are those of a theta series with
characteristics $b$ and $c\in\Lambda \otimes \mathbb{R}$.
They are  related to the standard multi-variable Jacobi forms in the sense of \cite{MR781735} under the change of variables
\be
\bp{\tilde {\vartheta}_{\mu}(\tau, z) := \expe{\tfrac12 B(\bfb,b\tau-c)}
\vartheta_{\bfmu}(\tau, \bfb,\bfc)} \,,
\qquad
z=b \tau-c\, .
\ee
Later in this work, remarks on the holomorphy of $\vartheta_{\bfmu}$ with respect to $\tau$ or
$z$ are to be understood as statements about $\tilde {\vartheta}_{\mu}\!\left(\tau, z\right)$.
\end{remark}

\begin{remark}
The class of theta series \eqref{Vignerasth} is closed under the action of
the Maass raising and lowering operators \bp{$\partial_{\tau}-\tfrac{\I w}{2\tau_2}$ and 
$\tau_2^2 \partial_{\bar\tau}$}, which map modular forms of weight $w$ to
modular forms of weight $w \bp{\pm} 2$:
\be
\label{covdertheta}
\begin{split}
\left(\partial_{\tau}-\tfrac{\I(\lambda+\tfrac{n}{2})}{2\tau_2}\right) \vartheta_{\bfmu}[\Phi,\lambda]
=&\,
\vartheta_{\bfmu}\! \left[-\tfrac{\I}{4} \left( \bfx \partial_{\bfx} \Phi
+ [\lambda+n+2\pi Q(\bfx)] \Phi \right), \lambda+2 \right],\\
\tau_2^2 \partial_{\bar\tau}\, \vartheta_{\bfmu}[\Phi,\lambda]
=&\,
\vartheta_{\bfmu}\! \left[\tfrac{\I}{4} \left( \bfx \partial_{\bfx} \Phi - \lambda \Phi\right) , \lambda-2 \right].
\end{split}
\ee
We refer to $\tau_2^2 \partial_{\bar\tau}$ and to its counterpart $\tfrac{\I}{4} ( \bfx \partial_{\bfx}  - \lambda)$,
as the `shadow' operators, and we occasionally omit the argument $\lambda$ when it is determined from $\Phi$ via \eqref{Vigdif}.
\end{remark}

\begin{remark}
For reasons which will become clear shortly, we shall be interested
in functions $\widehat\Phi(x)$ which asymptote  to
a  locally polynomial, homogeneous function $\Phi(x)$ of degree $\lambda$
with exponential accuracy as $|x|\to\infty$ along generic radial rays.
In this case, $\widehat\Phi(x)$ can be recovered from its shadow
$\Psi=\tfrac{\I}{4} ( \bfx \partial_{\bfx}  - \lambda)\widehat\Phi$ and from its value at infinity
$\Phi$ by integrating along radial rays,
\be
\label{PhifromPsi}
\widehat\Phi(x) = \Phi(x) +4\I \int_1^{\infty} \Psi(t x)\, \bp{\frac{\de t}{t^{\lambda+1}}\,.}
\ee
Inserting this in $\vartheta_{\bfp,\bfmu}[\widehat\Phi]$ and changing variable from $t$ to
$\bar w=\tau-2\I\tau_2 t^2$, we find that $\vartheta_{\bfp,\bfmu}[\widehat\Phi]$ and
$\vartheta_{\bfmu}[\Phi]$ differ by a term proportional to the Eichler integral of
$\vartheta_{\bfmu}[\Psi]$,
\be
\label{wPhiperiod}
\vartheta_{\bfmu}[\widehat\Phi,\lambda]\left(\tau,\bfb,\bfc\right)=
\vartheta_{\bfmu}[\Phi,\lambda]\left(\tau,\bfb,\bfc\right)
-4\int_{-\I\infty}^{\bar\tau} \vartheta_{\bfmu}
[\Psi ,\lambda-2]\left( \tau, \bar w, \bfb,\bfc\right) \bp{\frac{\de \bar w}{(\tau-\bar w)^2}} \,.
\ee
Here, for a real-analytic function  $\Psi(x)$ on $\Lambda\otimes \IR$,
$\vartheta_{\bfmu}[\Psi]\left(\tau, \bar w,\bfb,\bfc\right)$
is defined by analytically extending \eqref{Vignerasth} away from the locus $\bar\tau=\tau_1-\I \tau_2$, i.e
by replacing $\bar \tau$ with $\bar w$.
\end{remark}

\begin{remark} \label{rk_Borcherds}
The simplest application of this theorem is to choose an $n_+$-dimensional
time-like plane $\cP$ inside $\Lambda\otimes \IR$, and decompose $x=x_+ + x_-$ where $x_+\in\cP$ and $B(x_+,x_-)=0$.
The function
$\Phi(x)=e^{-\pi Q(x_+)}$ then satisfies the assumptions (i), (ii) with
$\lambda=-n_+$. The resulting  theta series $\vartheta_{\bfmu}[\Phi]$ is 
\bp{the familiar Siegel theta series, a
vector-valued Jacobi form
of weight $\tfrac{n_--n_+}{2}$ (recall our unusual sign convention for the quadratic form), 
which is however not holomorphic in $\tau$.} The more general Siegel theta series
depending on a homogenous polynomial of degree $(m_+,m_-)$ in $(x_+,x_-)$
constructed in \cite{0919.11036} can also be understood in this framework.
\end{remark}

\subsection{Lorentzian theta series}
\label{secLorentzian}
In order to obtain a theta series which is holomorphic in $\tau$, it is necessary
to choose a function $\Phi(x)$ which is locally homogeneous of degree $\lambda$. 
However, such functions
do not satisfy the conditions i) and ii) above (except if $\Phi$ is strictly constant,
which is only admissible if $B$ is negative definite), and the resulting theta series
will not be modular.  For Lorentzian signature $(n_+,n_-)=(1,n-1)$, the modular anomalies
of such indefinite theta series are now well
understood, thanks to the work of G\"ottsche and Zagier \cite{MR1623706} and Zwegers \cite{Zwegers-thesis}.
We recall the following theorem from their work:

\begin{theorem}\cite{MR1623706, MR2605321, Zwegers-thesis}
\label{modularLorentz}
Let $Q(x)$ be a quadratic form of signature $(1,n-1)$. For any vector $C$ with $Q(C)>0$, we
denote $E_1(C;x):=E_1\!\left( \tfrac{B(C,x)}{\sqrt{Q(C)}} \right)$ where
$E_1(u):=\Erf(u\sqrt{\pi})$ is the (rescaled) error function.
For any pair of  linearly independent vectors $C, C' \in \Lambda\otimes \IR$ such that $Q(C), Q(C'), B(C,C')>0$, the following holds:
\begin{itemize}
\item[i)]  The theta series $\vartheta_{\bfmu}[\Phi_1,0]$ with kernel
\be
\label{PhiZwegers0}
\Phi_1(x) := \frac12 \,\Bigl[ \sign B(C,x)  - \sign B(C',x) \Bigr]
\ee
is convergent, holomorphic in $\tau$  and $z$ (in the sense of Remark \ref{rk_hol}),
away from real codimension-1
loci where $B(C,k+b)=0$ or $B(C',k+b)=0$ for some $k\in\Lambda+\mu+\tfrac12 p$.
\item[ii)] The theta series
$\vartheta_{\bfmu}[\widehat\Phi_1,0]$ with kernel
\be
\label{PhiZwegers}
\widehat\Phi_1(x) := \frac12 \,\Bigl[ E_1(C;x)- E_1(C';x)\Bigr]
\ee
is a non-holomorphic vector-valued Jacobi form of weight \bp{$\tfrac{n}{2}$}.
\item[iii)] The shadow of $\vartheta_{\bfmu}[\widehat\Phi_1,0]$ is the Gaussian theta series
$\vartheta_{\bfmu}[\Psi_1,-2]$
with kernel
\be
\Psi_1(x) :=  \frac{\I}{4} \left[ \tfrac{B(C,x)}{\sqrt{Q(C)}}\, e^{-\frac{\pi B(C,x)^2}{Q(C)}}  -
\tfrac{B(C',x)}{\sqrt{Q(C')}}\, e^{-\frac{\pi B (C',x)^2}{Q(C')}}  \right].
\ee
\end{itemize}
\end{theorem}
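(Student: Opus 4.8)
The plan is to handle the three parts separately, with Vign\'eras' theorem serving as the engine for (ii) and the Maass lowering formula \eqref{covdertheta} doing the work for (iii). The single analytic input I would isolate first is the Lorentzian \emph{reverse Cauchy--Schwarz inequality}: for a timelike $C$ in signature $(1,n-1)$, writing $x=\alpha C+y$ with $B(C,y)=0$ and using that $C^\perp$ is negative definite (so $Q(y)\leq 0$), one finds
\be
B(C,x)^2 = Q(C)\,Q(x)-Q(C)\,Q(y)\geq Q(C)\,Q(x),
\ee
with equality iff $x\parallel C$. Equivalently $u^2\geq Q(x)$ for $u=B(C,x)/\sqrt{Q(C)}$, and likewise $u'^2\geq Q(x)$ for $u'=B(C',x)/\sqrt{Q(C')}$.

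For part (i), I would first pin down the support of $\Phi_1$. Since $Q(C),Q(C')>0$ and $B(C,C')>0$, the vectors $C,C'$ lie in the same cone, so for any timelike $x$ the numbers $B(C,x)$ and $B(C',x)$ carry the common sign of that cone; hence $\Phi_1(x)=0$ whenever $Q(x)>0$. On the boundary rays $B(C,x)=0$ (resp.\ $B(C',x)=0$) one has $x\in C^\perp$ (resp.\ $C'^\perp$), which is spacelike, so $Q(x)<0$ on the whole closed support away from the origin. By homogeneity $Q(x)\leq -\delta|x|^2$ there, whence $f(x)=\Phi_1(x)\,e^{\frac\pi2 Q(x)}$ is Gaussian-suppressed and the sum \eqref{Vignerasth} converges absolutely. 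Holomorphy is then immediate: $\Phi_1$ is homogeneous of degree $0$, so $\Phi_1(\sqrt{2\tau_2}(\bfk+\bfb))=\Phi_1(\bfk+\bfb)$ and, with $\lambda=0$, all $\tau_2$-dependence drops out, leaving a series manifestly holomorphic in $q$ and $z$ except on the jump loci $B(C,\bfk+\bfb)=0$ or $B(C',\bfk+\bfb)=0$.

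For part (ii), I would verify the two hypotheses of Vign\'eras' theorem for $\widehat\Phi_1$ with $\lambda=0$. Condition (ii) is a direct computation: with $g=E_1$, so $g'(u)=2e^{-\pi u^2}$ and $g''(u)=-2\pi u\,g'(u)$, the chain rule together with the identity $B^{-1}(BC,BC)=Q(C)$ gives $B^{-1}(\pa_\bfx,\pa_\bfx)\,g(u)=g''(u)$ and $2\pi\,\bfx\pa_\bfx\,g(u)=2\pi u\,g'(u)$, whose sum vanishes; by linearity $\widehat\Phi_1$ solves \eqref{Vigdif}. The main obstacle is condition (i), the requirement that $f=\widehat\Phi_1\,e^{\frac\pi2 Q}$, its derivatives and its polynomial multiples all lie in $L^1\cap L^2$. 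Here I would use $E_1(u)-\sign(u)=-\sign(u)\,\Erfc(|u|\sqrt\pi)$, which decays like $e^{-\pi u^2}$, so that $\widehat\Phi_1-\Phi_1$ is Gaussian-suppressed: in the spacelike region $e^{\frac\pi2 Q(x)}$ already decays, while in the timelike region the bounds $u^2,u'^2\geq Q(x)$ give $\tfrac\pi2 Q(x)-\pi\min(u^2,u'^2)\leq-\tfrac\pi2 Q(x)\to-\infty$, dominating the growth of $e^{\frac\pi2 Q(x)}$; the null directions follow by the same estimate in a limit, and derivatives and polynomial factors only insert extra powers of $x$ absorbed by the Gaussian. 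With both conditions in hand, Vign\'eras' theorem yields that $\vartheta_{\bfmu}[\widehat\Phi_1,0]$ is a vector-valued Jacobi form of weight $(\tfrac n2,0)$; it is non-holomorphic precisely because $\widehat\Phi_1$ is not homogeneous of degree $0$, so the argument $\sqrt{2\tau_2}(\bfk+\bfb)$ reintroduces genuine $\tau_2$-dependence.

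Finally, part (iii) is a corollary of \eqref{covdertheta}, which with $\lambda=0$ identifies the shadow kernel as $\tfrac{\I}{4}\,\bfx\pa_\bfx\widehat\Phi_1$. Using $\bfx\pa_\bfx\,g(u)=u\,g'(u)=2u\,e^{-\pi u^2}$ from the computation above reproduces exactly the stated $\Psi_1$, and $\vartheta_{\bfmu}[\Psi_1,-2]$ is manifestly Gaussian. I expect the only genuinely delicate step in the whole argument to be the integrability estimates of condition (i), namely controlling $\widehat\Phi_1\,e^{\frac\pi2 Q}$ uniformly as $x$ crosses the light cone; everything else is either support bookkeeping or the differential identities already recorded.
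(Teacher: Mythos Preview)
Your overall strategy matches the paper's exactly: Vign\'eras for (ii), the lowering formula \eqref{covdertheta} for (iii), and for both (i) and (ii) the decomposition $E_1=\sign+M_1$ with $|M_1(u)|\leq e^{-\pi u^2}$. Your treatment of part (i) is a clean geometric variant of what the paper does: instead of your compactness argument (support of $\Phi_1$ lies in $\{Q<0\}$, hence $Q(x)\leq-\delta|x|^2$ there), the paper writes down the explicit negative-definite form
\[
Q_{C,C'}(x)=Q(x)+\tfrac{2B(C,C')\,B(C,x)B(C',x)}{\Delta(C,C')}
\]
via the Gram determinant $\Delta(x,C,C')$, and observes $Q\leq Q_{C,C'}$ on the support. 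Both are correct; the paper's explicit form is what generalizes to signature $(2,n-2)$ later.

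There is, however, a genuine gap in your integrability argument for (ii). Your region-by-region bounds do not patch: in the spacelike region you bound $|M_1(C;x)e^{\frac\pi2 Q(x)}|\leq e^{\frac\pi2 Q(x)}$, and in the timelike region you bound it by $e^{-\frac\pi2 Q(x)}$ using $u^2\geq Q(x)$. Both exponents tend to $0$ as $x$ approaches the light cone, so neither gives a uniform Gaussian bound, and ``the null directions follow by the same estimate in a limit'' does not rescue this. The fix is already implicit in your own reverse Cauchy--Schwarz. Combine the $M_1$ decay with the weight into a single quadratic form:
\[
\Bigl|M_1(C;x)\,e^{\frac\pi2 Q(x)}\Bigr|\leq e^{\frac\pi2\bigl(Q(x)-2u^2\bigr)}=e^{\frac\pi2 Q_C(x)},\qquad Q_C(x)=Q(x)-\tfrac{2B(C,x)^2}{Q(C)}.
\]
Your inequality $u^2\geq Q(x)$, with equality iff $x\parallel C$, gives $Q_C(x)=Q(x)-u^2-u^2< -u^2\leq 0$ whenever $x\not\parallel C$ and $u\neq0$; when $u=0$ one has $x\in C^\perp$, negative definite, so $Q_C(x)=Q(x)<0$; when $x\parallel C$, $Q_C(x)=-u^2<0$. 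Hence $Q_C$ is negative definite, and you get the uniform bound $|f|\leq e^{-\frac{\pi\delta}{2}|x|^2}$ needed for condition (i) of Vign\'eras. This is precisely the argument the paper gives (phrased via the Gram determinant $\Delta(x,C)<0$), and it replaces your three separate regions with one line.
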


\begin{proof}
To establish the convergence of  $\vartheta_{\bfmu}[\Phi_1,0]$, observe that
whenever $\{x,C,C'\}$ are linearly independent, they span a signature $(1,2)$ plane, so
the determinant $\Delta(x,C,C')$ of the Gram matrix of $B$ on $\{x,C,C'\}$
is strictly positive. The latter evaluates to
\be
\label{DxCC}
\Delta(x,C,C')= Q_{C,C'}(x)\, \Delta(C,C')
 -\left[ Q(C)\, B(C',x)^2 + Q(C')\, B(C,x)^2 \right],
\ee
where $\Delta(C,C')=Q(C) Q(C')- B(C,C')^2<0$ and
\be
Q_{C,C'}(x):=  Q(x)+\tfrac{2B(C,C')\, B(C,x) B(C',x)}{\Delta(C,C')}.
\ee
Thus $Q_{C,C'}(x)<0$. If $\{x,C,C'\}$ are linearly dependent,
$\Delta(x,C,C')$ vanishes, so $Q_{C,C'}(x)\leq 0$, with equality when
$B(C,x)=B(C',x)=0$, hence $x=0$.
Therefore, the quadratic form $Q_{C,C'}(x)$ is negative definite. Now, $\Phi_1(x)$ vanishes
unless $B(C,x)\, B(C',x)\leq 0$. For those $x$, since $B(C,C')>0$, we have $Q(x)\leq Q_{C,C'}(x)$, so
$f(x)=\Phi_1(x)\, e^{\tfrac{\pi}{2} Q(x)}$ is dominated by $e^{\tfrac{\pi}{2} Q_{C,C'}(x)}$. In particular,
it lies in $L^1(\Lambda\otimes \IR)$, so  $\vartheta_{\bfmu}[\Phi_1,0]$ converges.
Holomorphy in $\tau$ and $z$ follows from the fact that $\Phi_1(x)$ is locally constant.

To establish the convergence of $\vartheta_{\bfmu}[\widehat\Phi_1]$, we
decompose $E_1(u)=M_1(u)+\sign(u)$ where
\be
\label{defM1}
M_1(u):=-\sign(u)\, \Erfc(|u|\sqrt{\pi})
\ee
is exponentially \bp{decreasing}  at large $u$, obtaining
\be
\label{PhiZwegersdec}
\widehat\Phi_1(x) = \Phi_1(x) +
\frac12\, M_1(C;x)-\frac12\,M_1(C';x),
\ee
where we denote $M_1(C;x) = M_1\!\left( \tfrac{B(C,x)}{\sqrt{Q(C)}} \right)$.
Focusing on the second term in \eqref{PhiZwegersdec}, we observe that whenever $\{x,C\}$
are linearly independent, $\Delta(x,C):=  Q(x)Q(C)-B(C,x)^2< 0$,
so $Q_C(x):=  Q(x)-2\tfrac{B(C,x)^2}{Q(C)}$ $\leq Q(x)-\tfrac{B(C,x)^2}{Q(C)}<0$, while equality
holds only for $x=0$. Thus the quadratic form $Q_C(x)$ is negative definite.
Since $|M_1(u)|\leq e^{-\pi u^2}$ for all $u\in \IR$, it follows that
$f(x):=  M_1(C;x)\, e^{\tfrac{\pi}{2}Q(x)}$ is dominated by $e^{\tfrac{\pi}{2}Q_C(x)}$.
The same argument applies to the
last term in \eqref{PhiZwegersdec}. Thus, $\widehat\Phi_1(x)$ satisfies condition i) of Vign\'eras' theorem.
Moreover, using $E'_1(u)=2 e^{-\pi u^2}$
one can check that $\widehat \Phi_1$ satisfies \eqref{Vigdif} with $\lambda=0$. This proves that
$\vartheta_{\bfmu}[\widehat\Phi_1]$ is a vector-valued Jacobi form of weight \bp{$\tfrac{n}{2}$}.
Its shadow is easily computed using \eqref{covdertheta}.
\end{proof}

\begin{remark}
For the case at hand, Eq. \eqref{wPhiperiod} shows that
the non-holomorphic theta series $\vartheta_{\bfmu}[\widehat\Phi_1]$ decomposes into
the sum of the holomorphic theta series $\vartheta_{\bfmu}[\Phi_1]$ and an Eichler integral
of the Gaussian theta series $\vartheta_{\bfmu}[\Psi_1]$. Both terms transform anomalously
under modular transformations, but the anomalies cancel in the sum.
\end{remark}

\begin{remark}
\label{remarknullvectors}
When $C$ and $C'$ degenerate to null vectors in $\Lambda$, the shadow
$\Psi_1$ vanishes and $\vartheta_{\bfmu}[\Phi_1]$ becomes a meromorphic Jacobi form,
away from the loci where $B(C,k+b)=0$ or $B(C',k+b)=0$ for some $k\in\Lambda+\mu+\tfrac12 p$ \cite{MR1623706}.
\end{remark}

\subsection{Integral representations of $M_1$ and $E_1$}

Our aim in the remainder of this work will be to generalize this construction to
holomorphic theta series of signature $(n_+,n_-)$ with $n_+>1$.  To prepare the ground,
it will be useful to note that the complementary error function $M_1(u)$ defined in \eqref{defM1}
has a contour integral representation
\be
\label{defM1int}
M_1(u)= \frac{\I}{\pi} \int_{\ell}\,
e^{-\pi z^2 -2\pi \I z u}\,\frac{\de z}{z} ,
\ee
where the contour $\ell=\IR-\I u$ runs parallel to the real axis through the saddle point at $z=-\I u$.
Indeed, setting $z=u(v-\I)$, one finds
\be
\label{defM1int2}
M_1(u) = \frac{\I}{\pi}\, \sign(u) \int_{\IR} e^{-\pi u^2 (v^2+1)}\, \frac{\de v}{v-\I}
=-\frac{2}{\pi}\, \sign(u) \int_0^\infty e^{-\pi u^2(v^2+1)}\, \frac{\de v}{v^2+1}.
\ee
Using $\Erfc(z)=\tfrac{2}{\pi} \int_0^{\infty}e^{-v^2 (z^2+1)}\, \tfrac{\de v}{v^2+1}$
for $|{\rm Arg}(z)|\leq \tfrac{\pi}{4}$ \cite[7.7.1]{NIST:DLMF}, we recover
\eqref{defM1}.
The representations \eqref{defM1int} and \eqref{defM1int2} make several facts obvious.
First, from \eqref{defM1int2} $M_1(u)$ is a real-valued, odd function of $u$, $C^\infty$ away from $u=0$, and exponentially
\bp{decreasing}  as $|u|\to\infty$. Second, from the pole at $z=0$ in \eqref{defM1int},
we see that $M_1(0^+)-M_1(0^-)=-2$. Third, the fact that $M_1(u)$ satisfies
Vign\'eras' equation \eqref{Vigdif} with $\lambda=0$ on $\IR\backslash\{0\}$
is easily seen by acting with the differential operator $\partial_u^2+2\pi u\partial_u$
on the integrand in \eqref{defM1int},
\be
(\partial_u^2+2\pi u\partial_u) \, e^{-\pi z^2-2\pi\I u z} = 2\pi z\, \partial_z  e^{-\pi z^2-2\pi\I u z} ,
\ee
which becomes a total derivative after dividing by $z$. Its shadow
$\tfrac{\I}{4}u\partial_u M_1(u)=\tfrac{\I}{2}\, u\, e^{-\pi u^2}$ is also easily computed
by acting with $\tfrac{\I}{4}u\partial_u$ on the integrand in \eqref{defM1int}.

Similarly, it will be useful to note that the error function $E_1(u)$
can be represented as
\be
\label{defE1int}
E_1(u) = \int_{\IR}  e^{-\pi(u-u')^2} \sign(u')\, \de u' \, ,
\ee
upon using $\int_{-\infty}^u e^{-\pi u'^2} \de u'= \tfrac12\(1+\Erf(u \sqrt{\pi})\)$.
Eq. \eqref{defE1int} means that $E_1(u)$ is the image of the function $\sign(u)$
under the heat kernel operator $e^{\frac{1}{4\pi}\partial_u^2}$. This  makes it manifest  that $E_1$ is a $C^\infty$
function on $\IR$ which asymptotes to $\sign(u)$ as $|u|\to\infty$.
The fact that $E_1(u)$ is a solution of Vign\'eras' equation on $\IR$ with $\lambda=0$
with the same shadow as $M_1$ is also easily seen by acting with the differential
operators $\partial_u^2+2\pi u\partial_u$ and $\tfrac{\I}{4}u\partial_u$ on the integrand
in \eqref{defE1int}.

To show that the integrals \eqref{defM1int} and \eqref{defE1int} satisfy $E_1(u)=M_1(u)+\sign(u)$,
we first move the contour $\ell$ in \eqref{defM1int} towards the real axis, such
that it avoids the pole at $z=0$ from below when $u>0$,
or from above when $u<0$. Equivalently, we write
\be
\label{defM1int3}
M_1(u) = \frac{\I}{\pi}\, \lim_{\epsilon\to 0+}  \int_{\ell}
e^{-\pi z^2 -2\pi \I z u}\,  \frac{\de z}{z-\I\epsilon \sign u}\ .
\ee
Using $ \mathop{\lim}\limits_{\epsilon\to0^+} \frac{1}{z-\I\epsilon}=\Pv(1/z)
+\I\pi \delta(z)$, where $\Pv(1/z)=\mathop{\lim}\limits_{\epsilon\to 0^+} \frac{z}{z^2+\epsilon^2}$
is the principal value, we see that
\be
\label{defM1int4}
M_1(u)+\sign(u) =  \frac{\I}{\pi} \int_{\IR} \Pv(1/z)\, e^{-\pi z^2 -2\pi \I z u} \,  \de z\, .
\ee
Using the fact that $\Pv(1/z)$ is the Fourier transform of $-\I \pi  \sign u$,  we recognize
the right-hand side of \eqref{defM1int4} as $E_1(u)$.
The representations \eqref{defM1int} and \eqref{defE1int} will be key for generalizing
the error functions $M_1$ and $E_1$ relevant for Lorentzian theta series
to the case of signature $(n_+,n_-)$ with $n_+>1$.

\section{Double error functions \label{sec_m2e2}}
\label{derror}

In this section we construct special functions $E_2(\alpha;u_1,u_2)$ and $M_2(\alpha;u_1,u_2)$,
analogous to the error and complementary error functions  $E_1(u)$ and $M_1(u)$ of the previous section,
which satisfy Vign\'eras' equation with $\lambda=0$ on $\IR^2$ (away from codimension one loci, in the case of $M_2$).
The parameter $\alpha$ controls the angle between the two lines where
$M_2$ jumps. We then promote these functions to
solutions $E_2(C_1,C_2;x)$ and $M_2(C_1,C_2;x)$ of Vign\'eras' equation  on $\IR^{2,n-2}$,
parametrized by pairs of vectors $C_1,C_2$ spanning a time-like two-plane.

\subsection{Double error functions}

In this subsection we define the double error functions $M_2$ and $E_2$ and,
in  Proposition \ref{EManalytic}, establish their main  analytic and asymptotic properties.
Proposition \ref{vign2n-2} proves that $E_2$ and $M_2$ are solutions of Vign\'eras' equation,
while Proposition \ref{EMem} expresses $M_2$ and $E_2$ in terms of new functions $m_2$ and $e_2$
which are convenient for analytic and numerical studies.
Theorem \ref{thmkerE2} gives an alternative definition of $E_2$ which generalizes naturally
to higher dimensions. Two-parameter versions of $M_2$ and $E_2$ are briefly mentioned
at the end.

\begin{definition}
\label{DefM2}
Let $\alpha\in \IR$,
$(u_1,u_2)\in \IR^2$, $u_1\neq 0$, $u_2\neq \alpha u_1$.
The `complementary double error function' $M_2(\alpha;u_1,u_2)$ is defined by the absolutely convergent integral
\be
\label{defM2}
M_2(\alpha;u_1, u_2) :=-\frac{1}{\pi^2}
\int_{\mathbb{R}-\I u_1} \left(
 \int_{\mathbb{R}-\I u_2}\,
\frac{e^{-\pi z_1^2 - \pi z_2^2 -2\pi \I (u_1 z_1 + u_2 z_2)}}{z_1(z_2-\alpha z_1)}\,  \de z_2 \right)  \de z_1\ .
\ee
\end{definition}

\begin{definition}
\label{DefE2}
Let $\alpha\in \IR$,
$(u_1,u_2)\in \IR^2$. For $u_1\neq 0$ and $u_2\neq \alpha u_1$, the `double error function' $E_2(\alpha;u_1, u_2)$
is defined in terms of $M_2(\alpha;u_1, u_2)$ by
\be
\label{defE2}
\begin{split}
E_2(\alpha;u_1, u_2) :=&\, M_2(\alpha;u_1, u_2)  +\sign(u_1)\, M_1\!\left(u_2 \right)
+\sign(u_2-\alpha u_1)\, M_1\!\left(\tfrac{u_1+\alpha u_2}{\sqrt{1+\alpha^2}}\right)
\\
&\, + \sign(u_2)\,  \sign(u_1+ \alpha u_2) .
\end{split}
\ee
\end{definition}

\begin{proposition} The functions $M_2$ and $E_2$ satisfy:
\label{EManalytic}
\begin{enumerate}
\item[i)] For any  $\alpha\in \mathbb{R}$, $M_2(\alpha;u_1,u_2)$ is a real valued $C^\infty$ function on $\IR^2$ away from the loci
$u_1=0$ and $u_2=\alpha u_1$. It is discontinuous across these  loci,
with jumps given by
\be
\label{M2xdisc}
\begin{split}
M_2(\alpha;u_1, u_2) \sim & \, -\sign(u_1)\, M_1(u_2)\quad \mbox{as}\quad u_1\to 0,
\\
M_2(\alpha;u_1, u_2) \sim &\, -\sign(u_2-\alpha u_1)\, M_1\!\left(\tfrac{u_1+\alpha u_2}{\sqrt{1+\alpha^2}}\right)
\quad
\mbox{as} \quad u_2-\alpha u_1\to 0.
\end{split}
\ee
\bp{where $\sim$ indicates that the difference between the left and
  right-hand side is continuous} \bp{in a neighbourhood of the respective loci.} 
In contrast, $E_2(\alpha;u_1,u_2)$ extends to a continuous $C^\infty$ function on $\IR^2$.
\item[ii)] For $\alpha=0$, $M_2$  and $E_2$ factorize into products of $M_1$ and $E_1$ respectively,
\be
\label{M2lim}
M_2(0;u_1,u_2)= M_1(u_1)\, M_1(u_2) ,
\qquad
E_2(0;u_1,u_2) = E_1(u_1) \, E_1(u_2).
\ee
\item[iii)] For large $(u_1,u_2)$, $M_2(\alpha;u_1,u_2)$ is exponentially \bp{decreasing},   and behaves as
\be
\label{M2largex}
M_2(\alpha;u_1, u_2) \sim -\frac{e^{-\pi(u_1^2+u_2^2)}}{\pi^2 u_1(u_2-\alpha u_1)}\, .
\ee
In contrast,  $E_2(\alpha;u_1,u_2)$ is locally constant at infinity,
\be
\label{E2largex}
E_2(\alpha;u_1, u_2) \sim \sign(u_2)\,  \sign(u_1+\alpha u_2) .
\ee

\end{enumerate}
\end{proposition}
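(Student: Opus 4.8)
The three assertions are essentially independent, so I would prove \eqref{M2lim} first as a warm-up. Setting $\alpha=0$ in \eqref{defM2}, the kernel factorizes as $\frac{1}{z_1z_2}e^{-\pi z_1^2-2\pi\I u_1z_1}e^{-\pi z_2^2-2\pi\I u_2z_2}$, so the double integral splits into a product of two copies of the one-dimensional representation \eqref{defM1int}, giving $M_2(0;u_1,u_2)=M_1(u_1)M_1(u_2)$ immediately. Feeding this together with $\alpha=0$ into \eqref{defE2} and regrouping by means of $E_1=M_1+\sign$ turns the four terms into $(M_1(u_1)+\sign(u_1))(M_1(u_2)+\sign(u_2))=E_1(u_1)E_1(u_2)$, which is the second half of \eqref{M2lim}.

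The workhorse for the rest is the real form of \eqref{defM2} obtained by writing $z_j=x_j-\I u_j$ with $x_j\in\IR$; the linear terms in the exponent cancel against the Gaussian shift, leaving
\[
M_2=-\frac{e^{-\pi(u_1^2+u_2^2)}}{\pi^2}\int_{\IR^2}\frac{e^{-\pi(x_1^2+x_2^2)}\,\de x_1\,\de x_2}{(x_1-\I u_1)\bigl((x_2-\alpha x_1)-\I(u_2-\alpha u_1)\bigr)}.
\]
Since $|x_1-\I u_1|\geq|u_1|$ and $|(x_2-\alpha x_1)-\I(u_2-\alpha u_1)|\geq|u_2-\alpha u_1|$, for $u_1\neq0$ and $u_2\neq\alpha u_1$ the integrand and all its $(u_1,u_2)$-derivatives are dominated by the Gaussian, so differentiation under the integral yields $C^\infty$-smoothness off the two loci, and combining complex conjugation with $x_j\mapsto-x_j$ shows $M_2$ is real. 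This representation also settles part (iii) for $M_2$: the factor $e^{-\pi(u_1^2+u_2^2)}$ is explicit, and along a generic ray the Gaussian concentrates at $x=0$, where evaluating the smooth factor and using $\int_\IR e^{-\pi x^2}\de x=1$ reproduces the leading behaviour \eqref{M2largex}; the formula degenerates on the rays $u_1=0$ and $u_2=\alpha u_1$, where the decay crosses over to the slower $M_1$-type rate, but exponential suppression holds in every direction.

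For the jumps \eqref{M2xdisc} I would read off the boundary values of the same integrand. As $u_1\to0^\pm$ the factor $(x_1-\I u_1)^{-1}$ tends to $\Pv(1/x_1)\pm\I\pi\delta(x_1)$, so across $u_1=0$ the discontinuity of the integrand is $2\I\pi\delta(x_1)$ times the remaining data; performing the $x_1$-integral and recognising the residual $x_2$-integral as \eqref{defM1int} gives $M_2(u_1\to0^+)-M_2(u_1\to0^-)=-2M_1(u_2)$, i.e. the singular part $-\sign(u_1)M_1(u_2)$. The locus $u_2=\alpha u_1$ is treated identically, the rescaling $x_1\mapsto x_1\sqrt{1+\alpha^2}$ producing the stated jump $-\sign(u_2-\alpha u_1)M_1\bigl(\tfrac{u_1+\alpha u_2}{\sqrt{1+\alpha^2}}\bigr)$.

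Proving that $E_2$ is not merely continuous but $C^\infty$ is the heart of part (i), and here I would exhibit $E_2$ as a double principal-value integral with a $u$-independent prescription. Starting from \eqref{defM2}, the change of variables $w_1=z_1$, $w_2=z_2-\alpha z_1$ decouples the two poles to $w_1=0$ and $w_2=0$ (the coupling surviving only in the Gaussian $e^{-\pi w_1^2-\pi(w_2+\alpha w_1)^2}$) and moves the contours to $\IR-\I u_1$ and $\IR-\I(u_2-\alpha u_1)$. Sliding both contours onto the real axis with the one-dimensional half-residue rule behind \eqref{defM1int4}, namely $\int_{\IR-\I u}z^{-1}e^{-\pi z^2-2\pi\I zu}\,\de z=\Pv\int_\IR z^{-1}e^{-\pi z^2-2\pi\I zu}\,\de z+\I\pi\sign(u)$, leaves a double principal-value integral $J$ whose prescription no longer depends on $u$; hence $J\in C^\infty$ by differentiation under the integral, exactly as \eqref{defM1int4} makes $E_1$ smooth. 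The half-residues reassemble, through \eqref{defM1int} and $E_1=M_1+\sign$, into the $M_1$-corrections of \eqref{defE2} plus sign bilinears, and the one genuinely delicate step is the identity
\[
\bigl(\sign(u_1)-\sign(u_1+\alpha u_2)\bigr)\bigl(\sign(u_2)-\sign(u_2-\alpha u_1)\bigr)=0,
\]
which collapses those bilinears to the single term $\sign(u_2)\sign(u_1+\alpha u_2)$ of \eqref{defE2} and thereby gives $E_2=J$. This identity holds because $(u_1+\alpha u_2,u_2-\alpha u_1)$ is the image of $(u_1,u_2)$ under a rotation by $\arctan\alpha\in(-\tfrac\pi2,\tfrac\pi2)$, which is too small to reverse the sign of both coordinates at once; establishing it and matching every residue is the main obstacle. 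Granting $E_2=J$, smoothness is immediate, and \eqref{E2largex} then follows by letting $|(u_1,u_2)|\to\infty$ along a generic ray in \eqref{defE2}, where $M_2$ and both $M_1$-terms are exponentially small and only $\sign(u_2)\sign(u_1+\alpha u_2)$ survives.
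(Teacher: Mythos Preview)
Your argument is correct, and for parts (ii) and (iii) it essentially coincides with the paper's (the paper just says ``saddle point method'' for \eqref{M2largex}, which is your Gaussian-concentration reading of the shifted real integral). The genuine divergence is in part (i), specifically in showing that $E_2$ is $C^\infty$. The paper does this by explicit differentiation: acting with $\partial_{u_1},\partial_{u_2}$ on the integral \eqref{defM2} yields closed formulas for $\partial_{u_i}M_2$ in terms of Gaussians times $M_1$, and combining these with the derivatives of the correction terms in \eqref{defE2} produces
\[
\partial_{u_1}E_2=\tfrac{2}{\sqrt{1+\alpha^2}}\,e^{-\frac{\pi(u_1+\alpha u_2)^2}{1+\alpha^2}}E_1\!\left(\tfrac{u_2-\alpha u_1}{\sqrt{1+\alpha^2}}\right),
\quad
\partial_{u_2}E_2=2e^{-\pi u_2^2}E_1(u_1)+\tfrac{2\alpha}{\sqrt{1+\alpha^2}}\,e^{-\frac{\pi(u_1+\alpha u_2)^2}{1+\alpha^2}}E_1\!\left(\tfrac{u_2-\alpha u_1}{\sqrt{1+\alpha^2}}\right),
\]
which are visibly $C^\infty$. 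Your route---decoupling the poles via $w_2=z_2-\alpha z_1$, sliding both contours to $\mathbb{R}$ with half-residues, and invoking the sign identity $(\sign u_1-\sign(u_1+\alpha u_2))(\sign u_2-\sign(u_2-\alpha u_1))=0$ to identify $E_2$ with the double principal-value integral~$J$---is equally valid and arguably more conceptual: it treats $E_2$ as the exact two-dimensional analogue of the one-dimensional identity \eqref{defM1int4}, and it generalizes cleanly to $E_{n_+}$. The paper in fact records your sign identity later, as \eqref{42sign2}, but uses it for a different purpose. What the paper's approach buys in exchange is the explicit derivative formulas above, which are then reused to compute the shadow of $E_2$ in the next proposition; your approach would require a separate calculation for that. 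For the jump behaviour of $M_2$, the paper instead rescales $v_1=z_1/u_1+\I$, $v_2=(z_2-\alpha z_1)/(u_2-\alpha u_1)+\I$ to obtain an explicit real integral \eqref{M2intuv} from which both limiting values (not just their difference) can be read off directly; your Plemelj/distributional argument reaches the same conclusion and is perfectly adequate for the stated claim.
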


\begin{proof}\noindent
\begin{enumerate}
\item[i)]To prove \eqref{M2xdisc}, we make the change of variables
$v_1=\tfrac{z_1}{u_1}+\I$ and $v_2=\tfrac{z_2-\alpha z_1}{u_2-\alpha u_1}+\I$ in \eqref{defM2}.
This brings $M_2$ to the form
\be
\label{M2intuv}
\begin{split}
M_2(\alpha;u_1, u_2)=&-\frac{1}{\pi^2} \,\sign(u_1)\,\sign(u_2-\alpha u_1)\,
e^{-\pi  u_1^2-\pi u_2^2}\\
&\times\int_{\mathbb{R}^2} 
e^{-\pi u_1^2 v_1^2-\pi [\alpha u_1 v_1+v_2(u_2-\alpha u_1)]^2}\, \frac{\de v_1\, \de v_2}{(v_1-\I)(v_2-\I)}\, ,
\end{split}
\ee
from which \eqref{M2xdisc} follows. The \bp{second and third} terms in \eqref{defE2} ensure
that $E_2(\alpha;u_1,u_2)$  is continuous across the loci $u_1=0$ and $u_2=\alpha u_1$.
The apparent discontinuities in  \eqref{defE2} on the loci $u_2=0$ and $u_1=-\alpha u_2$
also cancel due to
the fact that $M_1(x)\sim -\sign(x)$ as $x\to 0$. The partial derivatives of $M_2(\alpha;u_1,u_2)$
are obtained by acting with $\partial_{u_1}$ and
$\partial_{u_2}$ on \eqref{defM2} and using \eqref{defM1int},
\be
\label{M2deriv}
\begin{split}
\partial_{u_1} M_2(\alpha;u_1, u_2) =&\, \tfrac{2}{\sqrt{1+\alpha^2}} \,
e^{-\frac{\pi(u_1+\alpha u_2)^2}{1+\alpha^2}} M_1\!\left( \tfrac{u_2-\alpha u_1}{\sqrt{1+\alpha^2}}\right),
\\
\partial_{u_2} M_2(\alpha;u_1, u_2) =&\, 2 e^{-\pi u_2^2} M_1(u_1)
+ \tfrac{2\alpha}{\sqrt{1+\alpha^2}}\,
e^{-\frac{\pi(u_1+\alpha u_2)^2}{1+\alpha^2}} M_1\!\left( \tfrac{u_2-\alpha u_1}{\sqrt{1+\alpha^2}}\right).
\end{split}
\ee
The partial derivatives of $E_2(\alpha;u_1,u_2)$, computed from \eqref{defE2} and \eqref{M2deriv},
\be
\label{E2deriv}
\begin{split}
\partial_{u_1} E_2(\alpha;u_1, u_2) =&\, \tfrac{2}{\sqrt{1+\alpha^2}} \,
e^{-\frac{\pi(u_1+\alpha u_2)^2}{1+\alpha^2}} E_1\!\left( \tfrac{u_2-\alpha u_1}{\sqrt{1+\alpha^2}}\right),
\\
\partial_{u_2} E_2(\alpha;u_1, u_2) =&\, 2 e^{-\pi u_2^2} E_1(u_1)
+ \tfrac{2\alpha}{\sqrt{1+\alpha^2}} \,e^{-\frac{\pi(u_1+\alpha u_2)^2}{1+\alpha^2}}
E_1\!\left( \tfrac{u_2-\alpha u_1}{\sqrt{1+\alpha^2}}\right),
\end{split}
\ee
are $C^\infty$, therefore $E_2(\alpha;u_1, u_2)$ extends to a $C^\infty$ function of $(u_1,u_2)\in\IR^2$.
\item[ii)]
This is an immediate consequence of the definitions.
\item[iii)] Eq. \eqref{M2largex} is obtained by
 the saddle point method on the integral \eqref{defM2}. Eq. \eqref{E2largex}
follows from \eqref{M2largex} and from the fact that $M_1(u)$ is exponentially \bp{decreasing} 
as $|u|\to\infty$.

\end{enumerate}
\end{proof}

\begin{proposition}
\label{vign2n-2}
The functions $M_2(\alpha;u_1, u_2)$ and $E_2(\alpha;u_1, u_2)$ are solutions of
Vign\'eras' equation with $\lambda=0$ on $\IR^2$
equipped with the quadratic form
$Q(u)=Q(u_1,u_2)=u_1^2+u_2^2$
in their respective domains of definition,
\be
\label{VigM2}
\begin{split}
&\left[ \pa_{u_1}^2+\pa_{u_2}^2+2\pi (u_1 \partial_{u_1}+u_2\partial_{u_2}) \right] M_2(\alpha;u_1, u_2)=0,
\\
&\left[ \pa_{u_1}^2+\pa_{u_2}^2+2\pi (u_1 \partial_{u_1}+u_2\partial_{u_2}) \right]\, E_2(\alpha;u_1, u_2) =0.
\end{split}
\ee
Their shadows are given by
\be
\label{shadowM2}
\frac{\I}{4}\, (u_1\pa_{u_1}+u_2\pa_{u_2})\, M_2(\alpha;u_1,u_2) = \frac{\I}{2}
\left[ u_2 \, e^{-\pi u_2^2}  M_1(u_1) + \tfrac{u_1+\alpha u_2}{\sqrt{1+\alpha^2}}\,
e^{-\frac{\pi(u_1+\alpha u_2)^2}{1+\alpha^2} }
M_1\!\left(\tfrac{u_2-\alpha u_1}{\sqrt{1+\alpha^2}}\right)
\right],
\ee
\be
\label{shadowE2}
\frac{\I}{4}\, (u_1 \pa_{u_1} + u_2 \pa_{u_2} )\, E_2(\alpha;u_1,u_2) = \frac{\I}{2}
\left[ u_2 \, e^{-\pi u_2^2} \, E_1(u_1) + \tfrac{u_1+\alpha u_2}{\sqrt{1+\alpha^2}}\,
e^{-\frac{\pi(u_1+\alpha u_2)^2}{1+\alpha^2} }\,
E_1\!\left(\tfrac{u_2-\alpha u_1}{\sqrt{1+\alpha^2}}\right)
\right].
\ee
\end{proposition}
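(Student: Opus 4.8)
The plan is to establish the equation for $M_2$ directly from its integral representation \eqref{defM2}, and then to obtain the equation for $E_2$ together with both shadow formulas from the first-derivative expressions \eqref{M2deriv} and \eqref{E2deriv} already proved in Proposition \ref{EManalytic}. Throughout I write $\cV=\pa_{u_1}^2+\pa_{u_2}^2+2\pi(u_1\pa_{u_1}+u_2\pa_{u_2})$ for the Vign\'eras operator at $\lambda=0$. A preliminary observation that organizes the whole argument is that $\cV$, as well as the shadow operator $\tfrac{\I}{4}(u_1\pa_{u_1}+u_2\pa_{u_2})$, is invariant under $O(2)$ rotations of the plane $(u_1,u_2)$, since both the Laplacian and the Euler vector field $u_1\pa_{u_1}+u_2\pa_{u_2}$ are. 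In particular, if $\xi$ denotes any single coordinate obtained by an orthogonal rotation of $(u_1,u_2)$, then any function depending on $\xi$ alone that solves the one-dimensional Vign\'eras equation in $\xi$ is automatically annihilated by $\cV$.

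For $M_2$ I would act with $\cV$ under the integral in \eqref{defM2}. The key step is the two-dimensional analogue of the identity used below \eqref{defM1int}: with $G=e^{-\pi z_1^2-\pi z_2^2-2\pi\I(u_1 z_1+u_2 z_2)}$ denoting the Gaussian integrand, a one-line computation gives
\be
\cV\, G=2\pi\,(z_1\pa_{z_1}+z_2\pa_{z_2})\,G,
\ee
so that $\cV$ acting on the $u$-dependence becomes the Euler operator acting on the $z$-dependence. Integrating by parts along the (straight) contours, whose boundary contributions vanish by the Gaussian decay of $G$, the formal adjoint of $z_1\pa_{z_1}+z_2\pa_{z_2}$ is $-(2+z_1\pa_{z_1}+z_2\pa_{z_2})$, whence
\be
\cV M_2=\frac{2}{\pi}\int_{\IR-\I u_1}\de z_1\int_{\IR-\I u_2}\de z_2\;G\,\Bigl(2+z_1\pa_{z_1}+z_2\pa_{z_2}\Bigr)\frac{1}{z_1(z_2-\alpha z_1)}=0,
\ee
the point being that the kernel $\tfrac{1}{z_1(z_2-\alpha z_1)}$ is homogeneous of degree $-2$, hence an eigenfunction of the Euler operator with eigenvalue $-2$, which exactly cancels the $+2$. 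I expect the only genuine subtlety to be the justification of differentiating under the integral sign: the contours $\IR-\I u_i$ move with $u$, and the integrand has poles on $z_1=0$ and $z_2=\alpha z_1$. This is handled by fixing the contours on the appropriate side of the poles within each chamber of constant $\sign(u_1)$ and $\sign(u_2-\alpha u_1)$, where $M_2$ is represented by a contour independent of $u$ and dominated convergence applies; the chamber walls are precisely the discontinuity loci of \eqref{M2xdisc}, which is why the equation holds only in the domain of definition of $M_2$.

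The shadow \eqref{shadowM2} then follows with no further integration: forming $u_1\pa_{u_1}M_2+u_2\pa_{u_2}M_2$ from \eqref{M2deriv}, the two contributions proportional to $M_1\!\bigl(\tfrac{u_2-\alpha u_1}{\sqrt{1+\alpha^2}}\bigr)$ combine, with coefficient $\tfrac{2(u_1+\alpha u_2)}{\sqrt{1+\alpha^2}}$, and multiplying by $\tfrac{\I}{4}$ reproduces the right-hand side of \eqref{shadowM2}. For $E_2$ I would exploit the decomposition \eqref{defE2}: by the rotation-invariance remark above, each of the terms $\sign(u_1)M_1(u_2)$ and $\sign(u_2-\alpha u_1)M_1\!\bigl(\tfrac{u_1+\alpha u_2}{\sqrt{1+\alpha^2}}\bigr)$ is, away from its jump locus, a locally constant multiple of a function of a single rotated coordinate that solves the one-dimensional Vign\'eras equation, hence lies in the kernel of $\cV$; the last term $\sign(u_2)\sign(u_1+\alpha u_2)$ is locally constant. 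Together with $\cV M_2=0$ this yields $\cV E_2=0$ on the complement of the jump loci, and since $E_2\in C^\infty(\IR^2)$ by Proposition \ref{EManalytic} the continuous function $\cV E_2$ vanishes on a dense open set and therefore identically. Finally, the shadow \eqref{shadowE2} is obtained from \eqref{E2deriv} by the identical combination used for \eqref{shadowM2}, with $M_1$ replaced by $E_1$ throughout.
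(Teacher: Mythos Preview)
Your proof is correct and follows essentially the same route as the paper: act with the Vign\'eras operator under the integral, convert the $u$-derivatives into the Euler operator $z_1\pa_{z_1}+z_2\pa_{z_2}$ acting on the Gaussian, and integrate by parts. Your homogeneity argument (the kernel $1/[z_1(z_2-\alpha z_1)]$ has degree $-2$, cancelling the Jacobian term) is a slightly cleaner packaging of what the paper does by explicitly computing $\pa_{z_1}\tfrac{1}{z_2-\alpha z_1}+\pa_{z_2}\tfrac{z_2}{z_1(z_2-\alpha z_1)}=0$; the two are equivalent. Your treatment of $E_2$ (each added term in \eqref{defE2} solves the one-variable equation in a rotated coordinate, then extend from a dense open set by smoothness) and of the shadows (read off directly from \eqref{M2deriv}, \eqref{E2deriv}) matches the paper's, with your contour and continuity remarks being a bit more explicit than the paper's terse ``the additional terms are solutions'' and ``the shadows follow from \eqref{M2deriv} and \eqref{E2deriv}.''
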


\begin{proof}
Acting with Vign\'eras' operator on the integral representation \eqref{defM2} of $M_2$ gives
\be
\begin{split}
& -\frac{2}{\pi} \int_{\ell_1}\int_{\ell_2}\,
\frac{1}{z_1(z_2-\alpha z_1)} 
\left[ \left(z_1 \p_{z_1} + z_2 \p_{z_2}\right) e^{-\pi (z_1^2 + z_2^2) - 2\pi\I(u_1 z_1 + u_2 z_2) }
\right]\,\de z_1 \de z_2
\\ &
= \frac{2}{\pi} \int_{\ell_1}\int_{\ell_2}\,  \left[\p_{z_1}
\left(\tfrac{1}{z_2-\alpha z_1}\right) + \p_{z_2}\left(\tfrac{z_2}{z_1(z_2-\alpha z_1)}\right)\right]
e^{-\pi (z_1^2 + z_2^2) - 2\pi\I(u_1 z_1 + u_2 z_2) } \, \de z_1 \de z_2 =0,
\end{split}
\ee
where the second line follows from the first by partial integration. This proves the first line in \eqref{VigM2}.
The additional terms in \eqref{defE2} are solutions of Vign\'eras' equation with $\lambda=0$, which proves the second line.
The shadows follow from \eqref{M2deriv} and \eqref{E2deriv}.
\end{proof}

We now introduce two functions $m_2(u_1,u_2)$ and $e_2(u_1,u_2)$  which serve
as building blocks for $M_2$ and $E_2$.

\begin{definition}
For $(u_1,u_2)\in\IR^2$, $u_1\neq 0$, we define
\be
\label{defm2}
m_2(u_1,u_2) :=   2 u_2\, \int_{1}^\infty \, e^{-\pi t^2 u_2^2 } \, M_1(t u_1) \, \de t \ .
\ee
\end{definition}

\begin{definition}
For $(u_1,u_2)\in\IR^2$ we define
\be
\label{defe2}
e_2(u_1,u_2) :=   2 u_2 \int_{0}^1 \, e^{-\pi t^2 u_2^2 } \, E_1(t u_1) \de t \ .
\ee
\end{definition}

\begin{proposition}
\label{emanalytic}
The functions $m_2(u_1,u_2)$ and $e_2(u_1,u_2)$ satisfy the following properties:
\begin{itemize}
\item[i)] $m_2$ is a solution of Vign\'eras' equation on $\IR^2$ with $\lambda=0$, $C^\infty$
away from the locus $u_1=0$, odd with respect to either of its arguments, exponentially \bp{decreasing}  at infinity;
near $u_1=0$, it behaves as $m_2(u_1,u_2)\sim \sign(u_1)\, M_1(u_2)$.

\item[ii)] $e_2$ is  a $C^\infty$ solution of Vign\'eras' equation with $\lambda=0$ on $\IR^2$,
odd with respect to either of its arguments, which asymptotes to $e(tu_1,tu_2)\sim
\tfrac{2}{\pi}{\rm Arctan}(u_1/u_2)$ as $t\to+\infty$.

\item[iii)] For small $(u_1,u_2)$, $e_2$ is given by the convergent series
\be
\label{defe2ser}
e_2(u_1,u_2) =
\sum_{k=0}^{\infty} \sum_{\ell=0}^{\infty} \frac{(-\pi)^{k+\ell}u_1^{2k+1} u_2^{2\ell+1}}{k!\,\ell!\, (k+\tfrac12) (k+\ell+1)}.
\ee

\item[iv)] $m_2$ is \bp{bounded} on $\IR^2$ by
\be
\label{m2bound}
|m_2(u_1,u_2)| < \frac{ |u_2|}{\sqrt{u_1^2+u_2^2}}
\,e^{-\pi(u_1^2+u_2^2)}.
\ee

\item[v)] For large $(u_1,u_2)$, $m_2$ has the asymptotic expansion
\be
\label{defm2ser}
 m_2(u_1,u_2)= u_2\, e^{-\pi(u_1^2+u_2^2)}
 \sum_{k=0}^{\infty}  \sum_{\ell=0}^{\infty} \frac{(-1)^{k+\ell+1}\, \Gamma(\ell+\tfrac12)\, (\ell+k)!}
 {\pi^{k+\ell+\tfrac52}\, \ell!\, u_1^{2\ell+1} (u_1^2+u_2^2)^{k+1}}\, .
\ee

\item[vi)] $m_2$ and $e_2$ satisfy
\bea
\label{m2e2}
m_2(u_1,u_2)+e_2(u_1,u_2) &=&\frac{2}{\pi}\, {\rm Arctan}\frac{u_1}{u_2}  + \sign(u_1) \, M_1(u_2)  ,
\\
\label{e21221}
e_2(u_1,u_2)+e_2(u_2,u_1) &=& \bp{E_1(u_1)\, E_1(u_2)} .
\eea

\end{itemize}
\end{proposition}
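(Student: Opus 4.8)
The plan is to treat $m_2$ and $e_2$ in parallel, since both have the common shape $2u_2\int e^{-\pi t^2 u_2^2}F(tu_1)\,\de t$ with $F=M_1$ on $[1,\infty)$ or $F=E_1$ on $[0,1]$, and to derive every assertion from three one-dimensional inputs: the Vignéras identities $M_1''(v)=-2\pi v M_1'(v)$ (off $v=0$) and $E_1''(v)=-2\pi v E_1'(v)$, the derivative formula $E_1'(v)=2e^{-\pi v^2}$, and the splitting $E_1=M_1+\sign$ from \eqref{defM1}. Smoothness in (i) and (ii) comes from differentiating under the integral sign — legitimate because $M_1$ is $C^\infty$ and exponentially small off $v=0$, while $E_1$ is entire and the $e_2$-integral runs over the compact interval $[0,1]$ — and oddness in each variable is immediate from the oddness of $M_1,E_1$ together with the explicit prefactor $u_2$ and the evenness of $e^{-\pi t^2u_2^2}$. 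For the Vignéras equations, writing $L=\partial_{u_1}^2+\partial_{u_2}^2+2\pi(u_1\partial_{u_1}+u_2\partial_{u_2})$, I would apply $L$ under the integral: the $M_1''$ terms are rewritten via $M_1''(tu_1)=-2\pi t u_1 M_1'(tu_1)$ and combined with the first-order $\partial_{u_1}$ terms into a single integral proportional to $\int t(1-t^2)e^{-\pi t^2u_2^2}M_1'(tu_1)\,\de t$; one integration by parts in $t$ (boundary terms vanishing because the weight $t(1-t^2)$ is zero at $t=0,1$ and $e^{-\pi t^2u_2^2}$ kills the endpoint at $t=\infty$) converts it into $M_1$-terms that cancel the remaining ones identically, giving $Lm_2=0$, and verbatim with $E_1$ gives $Le_2=0$. (Conceptually this reflects that $m_2,e_2$ are radial integrals of the functions $u_2e^{-\pi u_2^2}M_1(u_1)$ and $u_2e^{-\pi u_2^2}E_1(u_1)$, which solve \eqref{Vigdif} with $\lambda=-2$.)

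Next I would handle the limits, bounds and Taylor series. For the jump in (i) I let $u_1\to0^+$, so $M_1(tu_1)\to M_1(0^+)=-1$ and $m_2\to-2u_2\int_1^\infty e^{-\pi t^2u_2^2}\,\de t$; the substitution $s=tu_2$ identifies this with $M_1(u_2)$, whence $m_2\sim\sign(u_1)M_1(u_2)$ by oddness. For the bound (iv) I use $|M_1(v)|\le e^{-\pi v^2}$ to get $|m_2|\le 2|u_2|\int_1^\infty e^{-\pi t^2 R^2}\,\de t=\tfrac{|u_2|}{R}\Erfc(R\sqrt\pi)$ with $R=\sqrt{u_1^2+u_2^2}$, and then the elementary inequality $\Erfc(x)<e^{-x^2}$ for $x>0$ (the difference $e^{-x^2}-\Erfc(x)$ vanishes at $0$ and $\infty$ and is unimodal), which also supplies the exponential suppression claimed in (i). For (iii) I substitute the everywhere-convergent series of $E_1(tu_1)$ and $e^{-\pi t^2u_2^2}$, integrate term by term using $\int_0^1 t^{2k+2\ell+1}\,\de t=\tfrac1{2(k+\ell+1)}$, and rewrite $\tfrac{2}{2k+1}=\tfrac1{k+1/2}$ to reach \eqref{defe2ser}.

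The one substantive computation is the full-ray integral
\[
2u_2\int_0^\infty e^{-\pi t^2u_2^2}M_1(tu_1)\,\de t=\tfrac{2}{\pi}\Arctan\tfrac{u_1}{u_2}-\sign(u_1)\sign(u_2),
\]
which I would prove by inserting \eqref{defM1int2}, exchanging integrations, performing the Gaussian $t$-integral, and reducing to $\int_0^\infty\frac{\de v}{(v^2+1)\sqrt{u_1^2v^2+u_1^2+u_2^2}}$; the substitutions $v=\tan\phi$ then $w=\sin\phi$ turn this into $\int_0^1\frac{\de w}{\sqrt{(u_1^2+u_2^2)-u_2^2w^2}}=\tfrac1{|u_2|}\arcsin\tfrac{|u_2|}{\sqrt{u_1^2+u_2^2}}$, and the complementarity of $\arcsin\tfrac{|u_2|}{\sqrt{u_1^2+u_2^2}}$ with $\arctan\tfrac{|u_1|}{|u_2|}$ yields the right-hand side after restoring signs. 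Granting this, the asymptotics in (ii) follow from $e_2(tu_1,tu_2)=2u_2\int_0^t e^{-\pi s^2u_2^2}E_1(su_1)\,\de s\to\tfrac2\pi\Arctan\tfrac{u_1}{u_2}$ as $t\to\infty$ (the two $\sign$-contributions cancelling), and the first relation of (vi) follows by splitting the $t$-ranges, using $E_1=M_1+\sign$ on $[0,1]$, and noting $2u_2\int_0^1 e^{-\pi t^2u_2^2}\,\de t=E_1(u_2)$. The second relation of (vi) is the cleanest: from $E_1'(v)=2e^{-\pi v^2}$ and $E_1(0)=0$, integrating $\tfrac{\de}{\de t}\big[E_1(tu_1)E_1(tu_2)\big]$ over $[0,1]$ gives $E_1(u_1)E_1(u_2)=e_2(u_1,u_2)+e_2(u_2,u_1)$ at once.

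Finally, for (v) I would insert the asymptotic $\Erfc$-expansion, i.e. $M_1(u)\sim-\tfrac{e^{-\pi u^2}}{\pi^{3/2}u}\sum_\ell(-1)^\ell\Gamma(\ell+\tfrac12)\pi^{-\ell}u^{-2\ell}$, into $m_2$ and evaluate $\int_1^\infty t^{-2\ell-1}e^{-\pi R^2t^2}\,\de t$ by Watson's lemma: the substitution $t^2=1+s/(\pi R^2)$ gives $\tfrac{e^{-\pi R^2}}{2\pi R^2}\int_0^\infty(1+\tfrac{s}{\pi R^2})^{-\ell-1}e^{-s}\,\de s$, and expanding the binomial factor generates the inner sum over $k$ with coefficient $(-1)^k(\ell+k)!/\ell!$, reproducing \eqref{defm2ser}. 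The only genuinely delicate point is the $\Arctan$ identity above — getting the angle and sign structure right (the $\arcsin$–$\arctan$ complementarity and the precise $\sign(u_1)\sign(u_2)$ term), since all of (ii) and (vi) hinge on it; part (v) is routine once the $\Erfc$ expansion is inserted but must be presented as a bona fide asymptotic series, the termwise integration being justified because the remainder after truncating the $\Erfc$ expansion is controlled by the first omitted term.
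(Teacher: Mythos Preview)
Your proof is correct. The overall architecture matches the paper's, but you take a different route at two points, and it is worth flagging the trade-offs.

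For the $\Arctan$ asymptotics in (ii) and the identity \eqref{m2e2}, you compute the full-ray integral $2u_2\int_0^\infty e^{-\pi t^2u_2^2}M_1(tu_1)\,\de t$ by inserting the $M_1$ representation \eqref{defM1int2}, doing the Gaussian $t$-integral, and then a chain of trigonometric substitutions culminating in the $\arcsin$--$\arctan$ complementarity. This works, but the paper's approach is shorter: it uses instead the representation $E_1(u)=2u\int_0^1 e^{-\pi u^2 v^2}\,\de v$, so that $2u_2\int_0^\infty e^{-\pi t^2u_2^2}E_1(tu_1)\,\de t$ becomes, after swapping the $t$ and $v$ integrals and doing the elementary Gaussian, simply $\tfrac{2u_1u_2}{\pi}\int_0^1\tfrac{\de v}{u_2^2+u_1^2v^2}=\tfrac{2}{\pi}\Arctan(u_1/u_2)$, with no $\arcsin$ detour and no sign bookkeeping. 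Both routes give (ii) and feed into \eqref{m2e2}; the paper's is more economical because the $E_1$ kernel already has the rational-function form that produces $\Arctan$ directly.

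Conversely, your argument for \eqref{e21221} --- integrating $\tfrac{\de}{\de t}[E_1(tu_1)E_1(tu_2)]$ over $[0,1]$ using $E_1'(v)=2e^{-\pi v^2}$ --- is cleaner than the paper's, which derives it by manipulating the double Taylor series \eqref{defe2ser}. Your method makes the symmetry transparent and avoids any series work. Everything else (Vign\'eras' equation via integration by parts in $t$, the bound (iv) via $|M_1|\le e^{-\pi u^2}$ and $\Erfc(x)<e^{-x^2}$, the series (iii), and Watson's lemma for (v)) is essentially the same as the paper, just written out in more detail.
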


\begin{proof}\noindent
\begin{enumerate}
\item[i-ii)] One verifies that $m_2$ and $e_2$ satisfy Vign\'eras' equation using
$E'_1(u)=2 e^{-\pi u^2}=M'_1(u)$ (away from $u=0$). The last statement in i) follows
by substituting $M_1(tu_1)\sim -\sign(tu_1)$ in \eqref{defm2} and using
$M_1(u)=-2u\int_1^\infty\, e^{-\pi u^2 v^2}\, \de v $.
The last
statement in ii) can be shown by using $E_1(u)=2u\int_0^1\de v \, e^{-\pi u^2 v^2}$
and exchanging the $t$ and $v$ integrals.
\item[iii)] Eq. \eqref{defe2ser} follows by inserting  the Taylor expansion
$E_1(u)=\sum\limits_{k=0}^{\infty} \frac{(-\pi)^k u^{2k+1}}{ k!\, (k+\tfrac12)}$ in \eqref{defe2}.

\item[iv)] Eq. \eqref{m2bound} follows from the fact that $|M_1(u)|$ is \bp{bounded} on $\IR$ by $e^{-\pi u^2}$
so that
\be
|m_2(u_1,u_2)| < 2|u_2|\, \int_1^{\infty}\, e^{-\pi(u_1^2+u_2^2)t^2}\, \de t
= \tfrac{ |u_2|}{\sqrt{u_1^2+u_2^2}} \Erfc\!\left(\sqrt{\pi(u_1^2+u_2^2)} \right) <
\tfrac{ |u_2|\,e^{-\pi(u_1^2+u_2^2)}}{\sqrt{u_1^2+u_2^2}}\, .
\ee

\item[v)]

Eq. \eqref{defm2ser} follows by inserting the asymptotic expansion
$M_1(u)= -\frac{1}{\pi}\, \sign(u)\, e^{-\pi u^2}$ $\times\sum\limits_{n=0}^{\infty}(-1)^n\Gamma(n+\tfrac12)
 |u \sqrt{\pi} |^{-2n-1}$  in \eqref{defm2}.

\item[vi)] Eq. \eqref{m2e2} follows by using $M_1(tu_1)=E_1(tu_1)-\sign(tu_1)$.
Eq. \eqref{e21221}
is a consequence of \eqref{defe2ser}.

\end{enumerate}
\end{proof}

\begin{proposition}
\label{EMem}
$M_2(\alpha;u_1,u_2)$ and $E_2(\alpha;u_1,u_2)$ can be expressed in terms of
$m_2$ and $e_2$ via
\be
\label{M2m2}
M_2(\alpha;u_1,u_2) = -m_2(u_1,u_2) - m_2\!\left( \tfrac{u_2-\alpha u_1}{\sqrt{1+\alpha^2}}, \tfrac{u_1+\alpha u_2}{\sqrt{1+\alpha^2}} \right),
\ee
\be
\label{E2e2}
E_2(\alpha;u_1,u_2) =e_2(u_1,u_2)+ e_2\!\left( \tfrac{u_2-\alpha u_1}{\sqrt{1+\alpha^2}}, \tfrac{u_1+\alpha u_2}{\sqrt{1+\alpha^2}} \right)
+ \frac{2}{\pi}\, \Arctan \alpha \, .
\ee
\end{proposition}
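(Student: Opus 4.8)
The plan is to establish the $M_2$ identity \eqref{M2m2} first, and then deduce the $E_2$ identity \eqref{E2e2} from it using the definition \eqref{defE2} of $E_2$ together with the relation \eqref{m2e2}. Throughout I abbreviate $w_1=\tfrac{u_2-\alpha u_1}{\sqrt{1+\alpha^2}}$ and $w_2=\tfrac{u_1+\alpha u_2}{\sqrt{1+\alpha^2}}$, and note that $(u_1,u_2)\mapsto(w_1,w_2)$ is an orthogonal transformation with $w_1^2+w_2^2=u_1^2+u_2^2$; hence $m_2(w_1,w_2)$ is again exponentially suppressed at infinity and discontinuous precisely across $w_1=0$, i.e.\ across $u_2=\alpha u_1$.

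For \eqref{M2m2} I would avoid computing the full gradient of $m_2$ and instead match the two sides through their radial behaviour. Rescaling the integration variable in \eqref{defm2} gives the scaling relation $m_2(su_1,su_2)=2u_2\int_s^\infty\de t\,e^{-\pi t^2u_2^2}M_1(tu_1)$, and differentiating at $s=1$ yields the Euler derivative $(u_1\pa_{u_1}+u_2\pa_{u_2})\,m_2=-2u_2\,e^{-\pi u_2^2}M_1(u_1)$. Since the Euler field $u_1\pa_{u_1}+u_2\pa_{u_2}$ is invariant under the orthogonal change of variables to $(w_1,w_2)$, the same computation gives $(u_1\pa_{u_1}+u_2\pa_{u_2})\,m_2(w_1,w_2)=-2w_2\,e^{-\pi w_2^2}M_1(w_1)$. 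Hence the Euler derivative of $-m_2(u_1,u_2)-m_2(w_1,w_2)$ equals $2\,u_2e^{-\pi u_2^2}M_1(u_1)+2\,w_2e^{-\pi w_2^2}M_1(w_1)$, which is exactly the Euler derivative $(u_1\pa_{u_1}+u_2\pa_{u_2})M_2$ read off from the shadow formula \eqref{shadowM2}. The two lines $u_1=0$ and $u_2=\alpha u_1$ cut $\IR^2$ into open sectors, each a union of rays through the origin, and on each sector both sides of \eqref{M2m2} are smooth and, by \eqref{M2largex} and Proposition \ref{emanalytic}(i), exponentially suppressed as $|x|\to\infty$. Integrating the common Euler derivative inward along each ray from infinity, where both functions vanish, then forces equality sector by sector, which proves \eqref{M2m2}.

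To obtain \eqref{E2e2} I would substitute \eqref{M2m2} into \eqref{defE2}, rewriting its third term as $\sign(w_1)\,M_1(w_2)$. Applying \eqref{m2e2} in the rearranged form $-m_2(a,b)+\sign(a)\,M_1(b)=e_2(a,b)-\tfrac{2}{\pi}\Arctan\tfrac{a}{b}$ to the pairs $(u_1,u_2)$ and $(w_1,w_2)$ then turns the two $m_2$ terms, combined with $\sign(u_1)M_1(u_2)$ and $\sign(w_1)M_1(w_2)$, into $e_2(u_1,u_2)+e_2(w_1,w_2)$ up to arctangent corrections. The identity \eqref{E2e2} thus reduces to the scalar statement
\[
-\tfrac{2}{\pi}\Arctan\tfrac{u_1}{u_2}-\tfrac{2}{\pi}\Arctan\tfrac{w_1}{w_2}+\sign(u_2)\,\sign(u_1+\alpha u_2)=\tfrac{2}{\pi}\Arctan\alpha .
\]

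The main obstacle is this last trigonometric identity, whose content is some branch bookkeeping. I would prove it with the scale-invariant parametrization $u_1=\rho\sin\psi$, $u_2=\rho\cos\psi$ ($\rho>0$) and $\alpha=\tan\theta$, $\theta=\Arctan\alpha\in(-\tfrac{\pi}{2},\tfrac{\pi}{2})$, under which $\tfrac{u_1}{u_2}=\tan\psi$, $\tfrac{w_1}{w_2}=\cot(\psi+\theta)$, $\sign(u_2)=\sign(\cos\psi)$ and $\sign(u_1+\alpha u_2)=\sign(\sin(\psi+\theta))$. Between consecutive discontinuities the two arctangents are linear in $\psi$ with opposite slopes $\mp\tfrac{2}{\pi}$ and the sign product is constant, so the left-hand side is piecewise constant; a short check that the jumps of the arctangents across $\cos\psi=0$ and $\sin(\psi+\theta)=0$ are cancelled by those of $\sign(\cos\psi)\sign(\sin(\psi+\theta))$ shows it is globally constant, and evaluating at $\psi=0$ fixes the constant to $\tfrac{2}{\pi}\theta$. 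As a consistency check, at $\alpha=0$ this collapses to $\Arctan\tfrac{u_1}{u_2}+\Arctan\tfrac{u_2}{u_1}=\tfrac{\pi}{2}\sign(u_1u_2)$ and \eqref{E2e2} reduces to \eqref{e21221}, consistently with the factorization \eqref{M2lim}.
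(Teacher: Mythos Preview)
Your proof is correct and follows essentially the same route as the paper: for \eqref{M2m2} both arguments integrate the shadow \eqref{shadowM2} inward along radial rays from infinity, using the exponential decay of both sides; for \eqref{E2e2} both reduce to the same arctangent identity (your displayed trigonometric statement is exactly the paper's \eqref{Arctanrule} rearranged). The only organizational difference is that the paper repeats the radial-integration argument directly for $E_2$ via the shadow \eqref{shadowE2} and the auxiliary function $\tilde e_2(u_1,u_2)=2u_2\int_1^\infty\de t\,e^{-\pi t^2u_2^2}E_1(tu_1)=\tfrac{2}{\pi}\Arctan\tfrac{u_1}{u_2}-e_2(u_1,u_2)$, whereas you substitute the already-proven \eqref{M2m2} into \eqref{defE2} and convert $m_2$ to $e_2$ via \eqref{m2e2}; both paths land on the same expression before invoking the arctangent identity.
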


\begin{proof} \eqref{M2m2} follows by integrating
the differential equation \eqref{shadowM2} along a radial ray $t(u_1,u_2)$,
$t\in [1,\infty[$, and using the fact that $M_2(\alpha;tu_1,tu_2)$ is exponentially
\bp{decreasing}  as $t\to\infty$. Similarly, integrating \eqref{shadowE2} along a radial ray
and using \eqref{E2largex}
leads to
\be
\label{E2e2h}
E_2(\alpha;u_1,u_2) =  -\tilde e_2(u_1,u_2)
- \tilde e_2\!\left( \tfrac{u_2-\alpha u_1}{\sqrt{1+\alpha^2}},
\tfrac{u_1+\alpha u_2}{\sqrt{1+\alpha^2}} \right) +\sign(u_2)\, \sign(u_1+\alpha u_2),
\ee
where
\be
\label{defe2t}
\tilde e_2(u_1,u_2):=   2 u_2\, \int_{1}^\infty e^{-\pi t^2 u_2^2 } \, E_1(t u_1) \, \de t 
= \frac{2}{\pi} \Arctan\, \frac{u_1}{u_2} - e_2(u_1,u_2).
\ee
This reduces to \eqref{E2e2} upon using the identity
\be
\label{Arctanrule}
 {\rm Arctan}\,\tfrac{u_2-\alpha u_1}{u_1+\alpha u_2}+
{\rm Arctan}\,\tfrac{u_1}{u_2}+  {\rm Arctan}\,\alpha
=\tfrac{\pi}{2}\,  \sign(u_2)\sign(u_1+\alpha u_2).
\ee
\end{proof}

\begin{remark}
Eq. \eqref{M2m2}, \eqref{E2e2}, \eqref{m2e2}, \eqref{defe2ser}
provide an efficient way of evaluating $M_2$ and $E_2$ numerically.
For illustration, plots of $m_2(u_1,u_2)$, $e_2(u_1,u_2)$, $M_2(\alpha;u_1,u_2)$ and $E_2(\alpha;u_1,u_2)$ for $\alpha=1$
are shown in Figures \ref{fig-plotsme} and \ref{fig-plotsME}.
\end{remark}

As an immediate consequence of Propositions \ref{emanalytic} i) and \ref{EMem}, we state the following corollary.
\begin{corollary}
\label{cordiscrete}
$M_2(\alpha;u_1,u_2)$ and $E_2(\alpha;u_1,u_2)$ satisfy the discrete symmetries
\bea
\label{M2discrete}
M_2(\alpha;u_1,u_2) &=& - M_2(-\alpha;-u_1,u_2) = -M_2(-\alpha;u_1,-u_2) = M_2(\alpha;-u_1,-u_2)
\nn\\
&=&M_2\!\left( \alpha; \tfrac{u_2-\alpha u_1}{\sqrt{1+\alpha^2}},\tfrac{u_1+\alpha u_2}{\sqrt{1+\alpha^2}}\right),
\\
\label{E2discrete}
E_2(\alpha;u_1,u_2) &=& - E_2(-\alpha;-u_1,u_2) = -E_2(-\alpha;u_1,-u_2) = E_2(\alpha;-u_1,-u_2)
\nn\\
&=&E_2\!\left( \alpha; \tfrac{u_2-\alpha u_1}{\sqrt{1+\alpha^2}},\tfrac{ u_1+\alpha u_2}{\sqrt{1+\alpha^2}}\right).
\eea
\end{corollary}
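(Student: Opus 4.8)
The plan is to feed the representations of $M_2$ and $E_2$ in terms of $m_2$ and $e_2$ supplied by Proposition~\ref{EMem} into each claimed identity, and then reduce everything to the oddness of $m_2$ and $e_2$ in each of their two arguments, established in Proposition~\ref{emanalytic}. It is convenient to abbreviate the rotated pair by
\be
(u_1',u_2')=\left(\tfrac{u_2-\alpha u_1}{\sqrt{1+\alpha^2}},\,\tfrac{u_1+\alpha u_2}{\sqrt{1+\alpha^2}}\right),
\ee
so that \eqref{M2m2} becomes $M_2(\alpha;u_1,u_2)=-m_2(u_1,u_2)-m_2(u_1',u_2')$ and \eqref{E2e2} becomes $E_2(\alpha;u_1,u_2)=e_2(u_1,u_2)+e_2(u_1',u_2')+\tfrac{2}{\pi}\Arctan\alpha$.

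For the three sign-reversal relations in the first lines of \eqref{M2discrete} and \eqref{E2discrete}, I would record how $(u_1',u_2')$ responds to each substitution. Replacing $(\alpha,u_1)$ by $(-\alpha,-u_1)$ fixes $u_1'$ and flips the sign of $u_2'$; oddness then sends both $m_2(u_1,u_2)$ and $m_2(u_1',u_2')$ to their negatives, giving $M_2(-\alpha;-u_1,u_2)=-M_2(\alpha;u_1,u_2)$. Symmetrically, $(\alpha,u_2)\to(-\alpha,-u_2)$ flips $u_1'$ and fixes $u_2'$, while $(u_1,u_2)\to(-u_1,-u_2)$ flips both; these produce the remaining two $M_2$ relations (the third being the composite of the first two). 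The identical bookkeeping applies to $E_2$, the only new point being that the additive constant $\tfrac{2}{\pi}\Arctan\alpha$ is odd in $\alpha$ and so acquires precisely the sign demanded under $\alpha\to-\alpha$.

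For the last relation in each display the key observation is that the linear map $(u_1,u_2)\mapsto(u_1',u_2')$ is an involution: a one-line computation gives $\tfrac{u_2'-\alpha u_1'}{\sqrt{1+\alpha^2}}=u_1$ and $\tfrac{u_1'+\alpha u_2'}{\sqrt{1+\alpha^2}}=u_2$, reflecting the fact that the associated matrix is orthogonal of determinant $-1$. Hence evaluating $M_2(\alpha;\cdot,\cdot)$ at $(u_1',u_2')$ simply interchanges the two summands $-m_2(u_1,u_2)$ and $-m_2(u_1',u_2')$, leaving the value unchanged, and likewise $E_2(\alpha;u_1',u_2')$ swaps the two $e_2$ terms while fixing $\tfrac{2}{\pi}\Arctan\alpha$. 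I do not anticipate any real obstacle: the whole argument is the sign bookkeeping above together with this involutivity, both of which follow directly once Propositions~\ref{EMem} and \ref{emanalytic} are in hand.
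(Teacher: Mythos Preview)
Your proof is correct and follows exactly the approach the paper indicates: the corollary is stated there as an immediate consequence of Propositions~\ref{emanalytic} and~\ref{EMem}, and you have spelled out precisely the sign bookkeeping and the involutivity of the rotation that make this immediate. There is nothing to add.
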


The following theorem gives an alternative definition of the double error function $E_2(\alpha;u_1,u_2)$,
which naturally suggests the higher-dimensional generalization
discussed in Section \ref{sec_gen}:

\begin{theorem} \label{thmkerE2}
$E_2(\alpha;u_1,u_2)$ is obtained by acting with the heat kernel operator
$e^{\tfrac{1}{4\pi} (\partial_{u_1}^2+\partial_{u_2}^2)}$ on the locally constant function
$\sign(u_2) \sign(u_1+\alpha u_2)$. Equivalently, it admits the integral representation
\be
\label{defE2int}
\begin{split}
E_2(\alpha;u_1,u_2) =
\int_{\IR^2} \, e^{-\pi(u_1-u'_1)^2-\pi(u_2-u'_2)^2} \sign(u'_2) \, \sign(u'_1+\alpha u'_2)\, \de u'_1 \de u'_2 \ .
\end{split}
\ee
\end{theorem}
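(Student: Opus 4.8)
The plan is to prove the stated identity $E_2=G$, where $G(\alpha;u_1,u_2)$ denotes the right-hand side of \eqref{defE2int}, by showing that the two sides have identical partial derivatives and identical behavior at infinity. First I would dispose of the word ``equivalently'': the operator $e^{\frac{1}{4\pi}(\partial_{u_1}^2+\partial_{u_2}^2)}$ factorizes as $e^{\frac{1}{4\pi}\partial_{u_1}^2}\,e^{\frac{1}{4\pi}\partial_{u_2}^2}$, and in one variable the heat semigroup acts by Gaussian convolution, $(e^{\frac{1}{4\pi}\partial_u^2}f)(u)=\int_\IR \de u'\,e^{-\pi(u-u')^2}f(u')$, with the normalization $\int_\IR e^{-\pi t^2}\de t=1$; this is exactly the representation \eqref{defE1int} already used for $E_1$. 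Applying it successively in $u_1$ and $u_2$ to $g(u'_1,u'_2)=\sign(u'_2)\,\sign(u'_1+\alpha u'_2)$ produces precisely the double integral in \eqref{defE2int}, so the two formulations coincide. The convolution converges absolutely since $g$ is bounded and the Gaussian is integrable, and $G$ is $C^\infty$ by differentiation under the integral sign.

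The heart of the argument is to compute $\partial_{u_1}G$ and $\partial_{u_2}G$ and match them against \eqref{E2deriv}. Writing $K=e^{-\pi(u_1-u'_1)^2-\pi(u_2-u'_2)^2}$ and using $\partial_{u_i}K=-\partial_{u'_i}K$, I would integrate by parts in the primed variables, the Gaussian killing all boundary terms, to transfer each derivative onto $g$. Since $g$ is locally constant, its distributional derivatives are supported on the two lines $u'_2=0$ and $u'_1+\alpha u'_2=0$: explicitly $\partial_{u'_1}g=2\,\sign(u'_2)\,\delta(u'_1+\alpha u'_2)$ and $\partial_{u'_2}g=2\,\delta(u'_2)\,\sign(u'_1+\alpha u'_2)+2\alpha\,\sign(u'_2)\,\delta(u'_1+\alpha u'_2)$. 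Each delta collapses one integration and leaves a single Gaussian integral, which after completing the square is recognized as an $E_1$ through the representation \eqref{defE1int} together with the oddness of $E_1$. This reproduces \eqref{E2deriv} exactly, so $\nabla G=\nabla E_2$ throughout $\IR^2$.

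Since $\IR^2$ is connected and both $E_2$ (by Proposition \ref{EManalytic}(i)) and $G$ are $C^\infty$ with equal gradients, $G-E_2$ must be a constant. To show this constant vanishes, I would let $|(u_1,u_2)|\to\infty$ along a ray in the interior of one of the four sectors cut out by the lines $u_2=0$ and $u_1+\alpha u_2=0$: there the Gaussian localizes and $G$ tends to the constant value of $g$ on that sector, namely $\sign(u_2)\,\sign(u_1+\alpha u_2)$, while $E_2$ tends to the same limit by \eqref{E2largex}. Hence $G\equiv E_2$, which is the assertion.

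The main obstacle I anticipate is the careful distributional handling of the integration by parts: justifying that one may differentiate under the integral and integrate by parts when $g$ is only piecewise constant, so that the derivatives of $g$ are genuinely the line-supported delta distributions above and no boundary terms survive. Conceptually this is underwritten by the scaling identity that applying $e^{s\Delta}$ to a function homogeneous of degree $\lambda$ yields, at $s=\tfrac{1}{4\pi}$, a solution of Vign\'eras' equation with parameter $\lambda$; with $\lambda=0$ this explains a priori why $G$ should coincide with the Vign\'eras solution $E_2$ of Proposition \ref{vign2n-2}. Nonetheless, the gradient computation above is the cleanest self-contained route, since it establishes the identity directly and sidesteps any separate uniqueness theorem for \eqref{VigM2}.
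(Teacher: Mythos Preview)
Your proposal is correct and follows essentially the same logic as the paper's proof: verify that the convolution integral and $E_2$ share the same derivative information and the same asymptotics along rays, hence agree. The only difference is a minor one of packaging. The paper computes the \emph{shadow} $\tfrac{\I}{4}(u_1\partial_{u_1}+u_2\partial_{u_2})$ of the right-hand side of \eqref{defE2int}, matches it to \eqref{shadowE2}, and then invokes the radial-integration principle \eqref{PhifromPsi} (two $C^\infty$ functions with the same shadow and the same radial asymptotics coincide). You instead compute the full gradient $(\partial_{u_1},\partial_{u_2})$, match it to \eqref{E2deriv}, and conclude that the difference is a constant fixed by the asymptotics. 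Your route is marginally more elementary, since it avoids the radial-integration formula entirely and reduces the identification to ``equal gradients on a connected domain $\Rightarrow$ equal up to a constant''; the paper's route is marginally more economical, since it computes only the single scalar combination $u_i\partial_{u_i}$ rather than both partials. Either way the distributional integration by parts you describe is exactly what is needed, and your treatment of it is fine.
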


\begin{proof}
The right-hand side of \eqref{defE2int} has the same asymptotic behavior \eqref{E2largex}
as $E_2(\alpha;u_1,u_2)$ in the limit where
$(u_1,u_2)$ goes to infinity along a fixed ray. Its shadow can be computed easily
using  \eqref{defE1int}
and matches that of $E_2(\alpha;u_1,u_2)$. Two $C^\infty$ functions with the same shadow
and same asymptotics along radial rays are necessarily equal.
\end{proof}

\begin{remark}
The integral representations \eqref{defM2} and \eqref{defE2int} are related by Fourier transform
over $u'_1$ and $u'_2$, similarly to the relation between \eqref{defM1int} and \eqref{defE1int}.
\end{remark}

\begin{remark}
The operator $e^{\tfrac{1}{4\pi} (\partial_{u_1}^2+\partial_{u_2}^2)}$ is a special case of
the operator $e^{-\tfrac{\Delta}{4\pi}}$ introduced by Borcherds \cite{0919.11036}
in order to construct Siegel
theta functions depending on homogeneous polynomial $P(x_+,x_-)$ of degree $(m_+,m_-)$  (see Remark \ref{rk_Borcherds}).
More generally, for any  {\it locally} polynomial, homogeneous function $P(x_+,x_-)$
of degree $(m_+,m_-)$, the kernel
$\Phi(x)=\bigl[ e^{-\tfrac{\Delta}{4\pi}}P(x_+,x_-)  \bigr]e^{-\pi Q(x_+)}$ is a
$C^{\infty}$ solution of Vign\' eras' equation with
$\lambda=-n_+ +m_- -m_+$, and thus gives rise to a generalized Siegel-Narain theta
series  $\vartheta_{\bfmu}[\Phi]$ of weight $\tfrac{n_--n_+}{2}+m_- -m_+$.
\end{remark}

Finally, we briefly consider a natural two-parameter generalization of \eqref{defM2},
\be
\label{defM2ab}
M_2(\alpha,\beta;u_1, u_2):=  \frac{\alpha-\beta}{\pi^2}
\int_{\mathbb{R}-\I u_1} \left(  \int_{\mathbb{R}-\I u_2}\,
\frac{e^{-\pi (z_1^2 +z_2^2) -2\pi \I (u_1 z_1 + u_2 z_2)}}{(z_1-\alpha z_2)(z_1-\beta z_2)}
 \de z_2\, \right)\de z_1.
\ee
Using $\tfrac{\alpha}{z_1-\alpha z_2}-\tfrac{\beta}{z_1-\beta z_2} = \tfrac{(\alpha-\beta) z_1}{(z_1-\alpha z_2)(z_1-\beta z_2)}\,$,
we can express \eqref{defM2ab} in terms of the one-parameter $M_2$,
\be
\label{M2abtoM2}
M_2(\alpha,\beta;u_1, u_2) = M_2(1/\alpha;u_1, u_2) - M_2(1/\beta;u_1, u_2)
=M_2\!\left( \tfrac{1+\alpha\beta}{\alpha-\beta};
\tfrac{u_1-\beta u_2}{\sqrt{1+\beta^2}},\tfrac{u_2+\beta u_1}{\sqrt{1+\beta^2}}\right).
\ee
Similarly, the natural two-parameter generalization of \eqref{defE2}
\be
\label{defE2ab}
E_2(\alpha,\beta;u_1,u_2) := \sign(\alpha-\beta)\,
\int_{\IR^2} \, e^{-\pi(u_1-u'_1)^2-\pi(u_2-u'_2)^2} \sign(u'_2+\alpha u'_1) \, \sign(u'_2+\beta u'_1)\, \de u'_1 \de u'_2 
\ee
can be shown to satisfy \eqref{M2abtoM2} with $M_2$ replaced by $E_2$.

\begin{figure}
\centerline{
\hfill
\includegraphics[height=4.5cm]{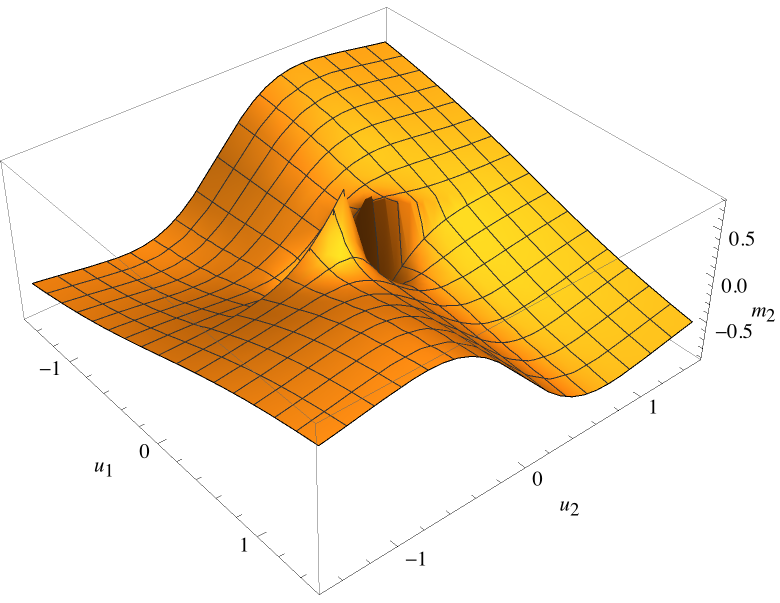}
\hfill\hfill
\includegraphics[height=4.5cm]{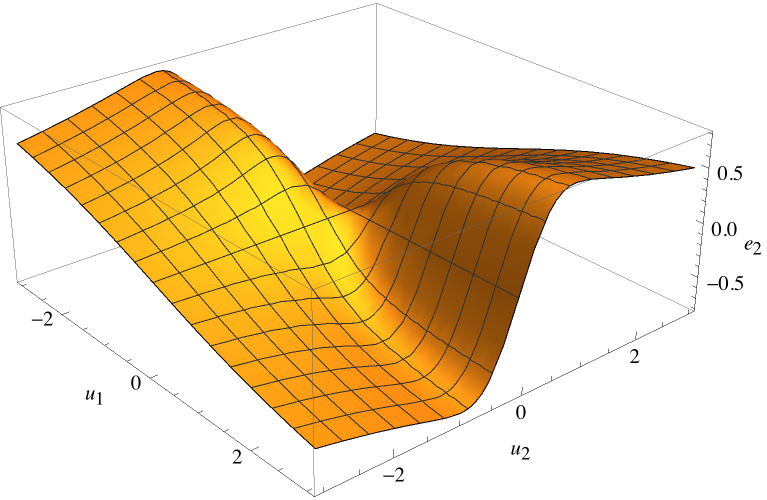}
\hfill
}
\caption{Plots of the functions $m_2(u_1,u_2)$ (left) and $e_2(u_1,u_2)$ (right).}
\label{fig-plotsme}
\end{figure}

\begin{figure}
\centerline{
\hfill
\includegraphics[height=4.5cm]{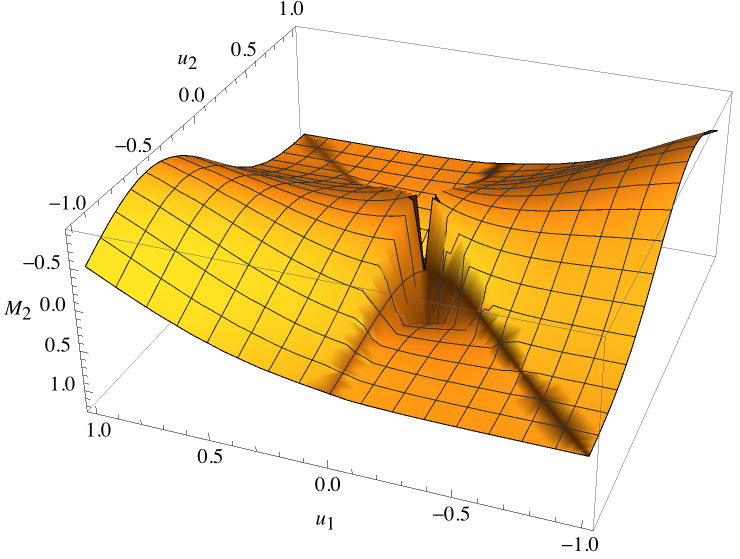}
\hfill\hfill
\includegraphics[height=4.5cm]{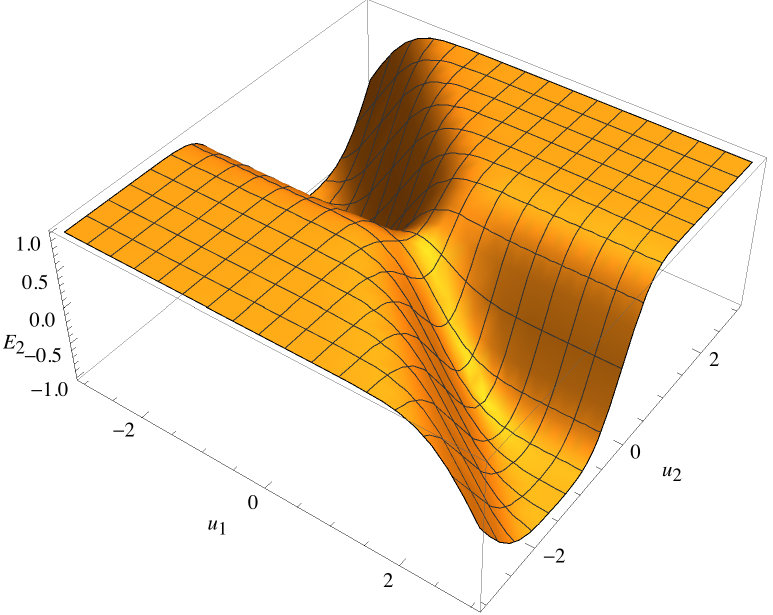}
\hfill
}
\caption{Plots of the functions $M_2(1;u_1,u_2)$ (left) and $E_2(1;u_1,u_2)$ (right).}
\label{fig-plotsME}
\end{figure}

\subsection{Boosted error functions}

We shall now use the functions $E_2(\alpha;u_1,u_2)$ and $M_2(\alpha;u_1,u_2)$ to
construct solutions $E_2(C_1,C_2;x)$ and $M_2(C_1,C_2;x)$ of Vign\'eras' equation on $\IR^{2,n-2}$
(away from suitable loci in the case of $M_2$),
parametrized by a pair of linearly independent vectors $C_1,C_2$ spanning a time-like two-plane.
The latter condition means that
\be
\label{Delta12pos}
Q(C_1)>0,
\qquad
Q(C_2)>0,
\qquad
\Delta(C_1,C_2):=  Q(C_1) Q(C_2) - B(C_1,C_2)^2> 0 .
\ee

Let us introduce some useful notations. We define the vectors
\be
\label{defperp}
C_{1\perp 2} := C_1-\tfrac{B(C_1,C_2)}{Q(C_2)}\, C_2,
\qquad
C_{2\perp 1} := C_2-\tfrac{B(C_1,C_2)}{Q(C_1)}\,C_1,
\ee
such that $B(C_{1\perp 2},C_2)=B(C_{2\perp 1},C_1)=0$, and the linear forms
\be
\label{u12}
u_1(x) :=   \tfrac{B(C_{1\perp 2},x)}{\sqrt{Q(C_{1\perp 2})}}\, ,
\qquad
u_2(x) :=  \tfrac{B(C_2,x)}{\sqrt{Q(C_2)}}\, ,
\ee
such that
\be
\label{u12alpha}
\tfrac{u_2(x)-\alpha u_1(x)}{\sqrt{1+\alpha^2}} =
\tfrac{B({ C_{2\perp 1}},x)}{\sqrt{Q({C_{2\perp 1}})}}\, ,
\qquad
\tfrac{u_1(x)+\alpha u_2(x)}{\sqrt{1+\alpha^2}} =
 \tfrac{B(C_1,x)}{\sqrt{Q(C_1)}}\, ,
\ee
where $\textstyle \alpha=\frac{B(C_1,C_2)}{\sqrt{\Delta(C_1,C_2)}}$. To motivate these
definitions, we note that in a basis where the quadratic form $Q(\bfx)=x_1^2 + x_2^2$
$- \sum_{i=3\dots  n} x_i^2$ is diagonal and the vectors $C_1, C_2$ are chosen as
\be
C_1 = \tfrac{t_1}{\sqrt{1+\alpha^2}}\, (1, \alpha , 0, \dots),
\qquad
C_2=t_2 \, (0,1,0,\dots), \qquad t_1,t_2\in \IR^+,
\ee
the linear forms $u_1(x)$ and $u_2(x)$ reduce to $x_1$ and $x_2$. The
function $E_2(\alpha;u_1(x),u_2(x))$ is then a solution of Vign\'eras' equation on
$\IR^{2,n-2}$ which asymptotes to $ \sign[u_1(x)+\alpha u_2(x)]$ $\sign[u_2(x)] =
\sign B(C_1,x) \sign B(C_2,x)$.

\begin{definition}
\label{DefE2C}
Let $C_1, C_2$ be a  pair of time-like vectors  with $\Delta(C_1,C_2)>0$. We define
the boosted double error function $E_2(C_1,C_2;x)$ by
\be
\label{defE2C}
E_2(C_1,C_2;x) := E_2\!\left( \tfrac{B(C_1,C_2)}{\sqrt{\Delta(C_1,C_2)}};
\tfrac{B(C_{1\perp 2},x)}{\sqrt{Q(C_{1\perp 2})}}\, ,
\tfrac{B(C_2,x)}{\sqrt{Q(C_2)}} \right).
\ee
\end{definition}

\begin{proposition}
\label{PropE2C}
The function $E_2$ satisfies:
\begin{enumerate}
\item[i)]
$E_2(C_1,C_2;x)$ is a $C^\infty$ function on $\IR^{2,n-2}$,
invariant under independent rescalings of $C_1$ and $C_2$ by a positive real number,
invariant under $C_1\leftrightarrow C_2$, and odd under $C_1\mapsto-C_1$ or $C_2\mapsto-C_2$.

\item[ii)] In the limit where $C_1$ and $C_2$ are orthogonal, $E_2(C_1,C_2;x)$ factorizes
\be
\label{E2lim0}
\lim_{B(C_1,C_2)\to 0} E_2(C_1,C_2;x)=  E_1(C_1;x)\,
E_1(C_2;x).
\ee

\item[iii)] In the asymptotic region where $|B(C_1,x)|\to\infty$, keeping $B(C_2,x)$ finite,
\be
E_2(C_1,C_2;x)\sim \sign B(C_1,x)\, E_1(C_2;x).
\ee

\item[iv)] In the asymptotic region where both $|B(C_1,x)|,|B(C_2,x)|\to \infty$,
\be
\label{E2Clargex}
E_2(C_1,C_2;x)\sim \sign B(C_1,x)\, \sign B(C_2,x).
\ee

\item[v)] $E_2(C_1,C_2;x)$ satisfies Vign\'eras' equation with $\lambda=0$ on $\IR^{2,n-2}$,
\be
\label{VigdifE2C}
\left[ B^{-1}(\partial_{\bfx},\partial_{\bfx})   + 2\pi  \bfx\pa_{\bfx}  \right] E_2(C_1,C_2;x) = 0 ,
\ee
and its shadow is given by
\be
\label{E2Cshadow}
 \frac{\I}{2}
\left[
\tfrac{B(C_2,x)}{\sqrt{Q(C_2)}} e^{-\frac{\pi B(C_2,x)^2}{Q(C_2)}}\,
E_1(C_{1\perp 2};x)
+ \tfrac{B(C_1,x)}{\sqrt{Q(C_1)}} e^{-\frac{\pi B(C_1,x)^2}{Q(C_1)}}\,
E_1(C_{2\perp 1};x)
\right].
\ee
\item[vi)] $E_2(C_1,C_2;x)$ admits the integral representation
\be
\label{defE2Cint}
E_2(C_1,C_2;x) = \int_{\left<C_1,C_2\right>} 
\sign B(C_1,y)\, \sign B(C_2,y)\, e^{-\pi Q(y-x_+)}\, \de^2 y\,
\ee
where $\de^2 y$ is the uniform measure on the two-plane $\left<C_1,C_2\right>$ spanned by
$\{C_1,C_2\}$, normalized such that
$\int_{\left<C_1,C_2\right>} e^{-\pi Q(y)}  \,\de^2 y =1$, and
\be
\label{defx12}
x_+= \tfrac{B(C_1,x) Q(C_2)-B(C_2,x) B(C_1,C_2)}{\Delta(C_1,C_2)}\, C_1 +
\tfrac{B(C_2,x) Q(C_1)-B(C_1,x) B(C_1,C_2)}{\Delta(C_1,C_2)}\, C_2
\ee
is the orthogonal projection of $x$ on $\left<C_1,C_2\right>$.

\end{enumerate}
\end{proposition}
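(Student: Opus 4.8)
The plan is to exploit the fact that, by Definition \ref{DefE2C}, the boosted function $E_2(C_1,C_2;x)$ is nothing but the planar double error function $E_2(\alpha;u_1,u_2)$ of Section \ref{derror} evaluated at $\alpha=B(C_1,C_2)/\sqrt{\Delta(C_1,C_2)}$ and at the two linear forms $u_1(x),u_2(x)$ of \eqref{u12}, so that almost every assertion follows by transporting a corresponding property of $E_2(\alpha;u_1,u_2)$ through this substitution. First I would assemble the elementary linear algebra that makes the substitution transparent. The vectors $\{C_{1\perp2},C_2\}$ of \eqref{defperp} form an orthogonal basis of the timelike plane $\langle C_1,C_2\rangle$, since $B(C_{1\perp2},C_2)=0$ and $Q(C_{1\perp2})=\Delta(C_1,C_2)/Q(C_2)>0$ by \eqref{Delta12pos}. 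Setting $e_1=C_{1\perp2}/\sqrt{Q(C_{1\perp2})}$ and $e_2=C_2/\sqrt{Q(C_2)}$, one has $Q(e_a)=1$, $B(e_1,e_2)=0$, and $u_a(x)=B(e_a,x)$, so that $u_1,u_2$ are the orthonormal coordinates of the orthogonal projection $x_+$ of $x$ onto $\langle C_1,C_2\rangle$, with $Q(x_+)=u_1^2+u_2^2$. The identities \eqref{u12alpha} then record that the two linear combinations entering the planar function are $B(C_1,x)/\sqrt{Q(C_1)}$ and $B(C_{2\perp1},x)/\sqrt{Q(C_{2\perp1})}$. I would also note once and for all how the data transforms: under $C_1\to\lambda_1 C_1$, $C_2\to\lambda_2 C_2$ with $\lambda_i>0$ both $\alpha$ and $u_1,u_2$ are invariant; under $C_1\to -C_1$ one has $\alpha\to-\alpha$, $u_1\to-u_1$, $u_2\to u_2$; and under $C_1\leftrightarrow C_2$ the pair $(u_1,u_2)$ is sent to $\bigl(\tfrac{u_2-\alpha u_1}{\sqrt{1+\alpha^2}},\tfrac{u_1+\alpha u_2}{\sqrt{1+\alpha^2}}\bigr)$ while $\alpha$ is unchanged.

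With these preliminaries, items (i)--(iv) are immediate. Smoothness in (i) follows from Proposition \ref{EManalytic}(i) together with the linearity of $u_1(x),u_2(x)$; the rescaling invariance is the invariance just recorded, and the $C_1\leftrightarrow C_2$ symmetry and the oddness under $C_a\to -C_a$ are precisely the discrete symmetries \eqref{E2discrete} of Corollary \ref{cordiscrete} read through the transformation rules above. Item (ii) is the $\alpha\to0$ factorization of Proposition \ref{EManalytic}(ii): when $B(C_1,C_2)\to0$ one has $\alpha\to0$ and $C_{1\perp2}\to C_1$, so $E_2(0;u_1,u_2)=E_1(u_1)E_1(u_2)=E_1(C_1;x)E_1(C_2;x)$. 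Item (iv) is the large-argument asymptotic \eqref{E2largex}, since $|B(C_1,x)|,|B(C_2,x)|\to\infty$ forces both $u_2$ and $u_1+\alpha u_2$ to infinity; using \eqref{u12alpha} this gives $\sign B(C_2,x)\,\sign B(C_1,x)$. Item (iii) is the refinement in which only $B(C_1,x)$ tends to infinity: here $\tfrac{u_1+\alpha u_2}{\sqrt{1+\alpha^2}}=B(C_1,x)/\sqrt{Q(C_1)}\to\pm\infty$ while $u_2$ stays bounded, and the cleanest way to read off the limit is from the heat-kernel representation \eqref{defE2int}: the Gaussian in the corresponding integration variable localizes, $\sign(u_1'+\alpha u_2')\to\sign B(C_1,x)$, and the remaining Gaussian integral against $\sign(u_2')$ returns $E_1(u_2)=E_1(C_2;x)$ by \eqref{defE1int}.

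The one genuinely new computation, and the step I expect to be the crux, is (v): that the full Vign\'eras operator on $\IR^{2,n-2}$ pulls back to the planar Vign\'eras operator under $x\mapsto(u_1(x),u_2(x))$. The Euler part is automatic, because for a function of linear forms $\sum_i x^i\partial_{x^i}F(u(x))=\sum_a u_a\,\partial_{u_a}F$. For the second-order part I would use $u_a(x)=B(e_a,x)$, whose $x$-gradient has components $(Be_a)_i$; then $B^{-1}(\partial_x,\partial_x)$ acting on $F(u(x))$ produces $\sum_{a,b}\langle Be_a,Be_b\rangle_{B^{-1}}\,\partial_{u_a}\partial_{u_b}F$, where $\langle Be_a,Be_b\rangle_{B^{-1}}=B(e_a,e_b)=\delta_{ab}$ by the orthonormality established above, and there is no contribution from the negative-definite directions since $F$ depends only on $u_1,u_2$. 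Hence $B^{-1}(\partial_x,\partial_x)F(u(x))=(\partial_{u_1}^2+\partial_{u_2}^2)F$, so \eqref{VigdifE2C} reduces to the planar Vign\'eras equation \eqref{VigM2} satisfied by $E_2(\alpha;u_1,u_2)$ (Proposition \ref{vign2n-2}). The shadow $\tfrac{\I}{4}\,x\partial_x E_2(C_1,C_2;x)$ equals $\tfrac{\I}{4}(u_1\partial_{u_1}+u_2\partial_{u_2})E_2(\alpha;u_1,u_2)$ by the Euler computation, and substituting \eqref{u12alpha} into the right-hand side of \eqref{shadowE2} turns it verbatim into \eqref{E2Cshadow}.

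Finally, for the integral representation (vi) I would change variables in \eqref{defE2int} to the orthonormal frame $\{e_1,e_2\}$ of the plane. Writing a point of $\langle C_1,C_2\rangle$ as $y=y_1e_1+y_2e_2$ gives $Q(y)=y_1^2+y_2^2$, so the normalized measure is $\de^2 y=\de y_1\,\de y_2$; moreover $B(C_2,y)=\sqrt{Q(C_2)}\,y_2$ and, using $Q(C_{1\perp2})=\Delta(C_1,C_2)/Q(C_2)$, $B(C_1,y)=\tfrac{\sqrt{\Delta(C_1,C_2)}}{\sqrt{Q(C_2)}}\,(y_1+\alpha y_2)$, whence $\sign B(C_2,y)=\sign(y_2)$ and $\sign B(C_1,y)=\sign(y_1+\alpha y_2)$. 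Since the projection $x_+$ of \eqref{defx12} has coordinates $(u_1,u_2)$ in this frame, $Q(y-x_+)=(y_1-u_1)^2+(y_2-u_2)^2$, and \eqref{defE2Cint} becomes exactly the planar heat-kernel integral \eqref{defE2int} for $E_2(\alpha;u_1,u_2)$. A brief check that \eqref{defx12} is the orthogonal projection, i.e.\ $B(x-x_+,C_1)=B(x-x_+,C_2)=0$, completes the argument.
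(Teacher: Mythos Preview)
Your proof is correct and follows essentially the same approach as the paper's: transport the planar properties of $E_2(\alpha;u_1,u_2)$ through Definition \ref{DefE2C} via the orthonormal frame $\{e_1,e_2\}$ of $\langle C_1,C_2\rangle$, reducing (i)--(iv) to Proposition \ref{EManalytic} and Corollary \ref{cordiscrete}, (v) to Proposition \ref{vign2n-2}, and (vi) to Theorem \ref{thmkerE2}. The only minor deviation is in (iii), where the paper invokes the decomposition \eqref{defE2} and the exponential suppression of $M_1, M_2$ rather than your heat-kernel localization, but both arguments are valid.
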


\begin{proof}\noindent
\begin{enumerate}
\item[i)] $E_2(C_1,C_2;x)$ is a $C^\infty$ function on $\IR^{2n-2}$ since by
Prop. \ref{EManalytic} i) $E_2(\alpha;u_1,u_2)$ is a $C^\infty$ function on $\IR^2$.
The symmetries of $E_2(C_1,C_2;x)$ under $C_1\leftrightarrow C_2$, $C_1\mapsto-C_1$
and $C_2\mapsto-C_2$ follow from Corollary \ref{cordiscrete}.

\item[ii)] This is a direct consequence of Prop. \ref{EManalytic} ii).

\item[iii)] This follows from the Definitions \ref{DefE2} and \ref{DefE2C},
and the asymptotic behavior of $M_1$ and $M_2$ (Prop. \ref{EManalytic} iii)).

\item[iv)] This is a direct consequence of the previous point and the asymptotic behavior of $M_1$.

\item[v)] Given Definition \ref{DefE2C} together with the relations \eqref{u12} and \eqref{u12alpha},
we have
\be
\begin{split}
&x\partial_x E_2(C_1,C_2;x)=\left[u_1\partial_{u_1}+u_2\partial_{u_2}\right] E_2(\alpha;u_1,u_2),
\\
&B^{-1}(\partial_x,\partial_x) E_2(C_1,C_2;x)=\left[\partial^2_{u_1}+\partial^2_{u_2}\right] E_2(\alpha;u_1,u_2).
\end{split}
\ee
The claim now follows from Prop. \ref{vign2n-2}.

\item[vi)] This follows from \eqref{defE2int}.
\end{enumerate}
\end{proof}

\begin{definition}
Let $C_1, C_2$ be a  pair of time-like vectors with $\Delta(C_1,C_2)>0$. We define
the boosted complementary double error function $M_2(C_1,C_2;x)$ by
\be
\label{defM2C}
M_2(C_1,C_2;x) := M_2\!\left( \tfrac{B(C_1,C_2)}{\sqrt{\Delta(C_1,C_2)}};
\tfrac{B(C_{1\perp 2},x)}{\sqrt{Q(C_{1\perp 2})}},
\tfrac{B(C_2,x)}{\sqrt{Q(C_2)}} \right).
\ee
\end{definition}

\begin{proposition}
$M_2$ satisfies the following properties:
\begin{enumerate}
\item[i)] $M_2(C_1,C_2;x)$ is a  $C^\infty$ function of $x$ away from the loci $B(C_{1\perp 2},x)=0$
and $B(C_{2\perp 1},x)=0$, invariant under independent rescalings of $C_1$ and $C_2$ by a positive real number,
invariant under $C_1\leftrightarrow C_2$, and odd under $C_1\mapsto -C_1$ or $C_2\mapsto-C_2$.

\item[ii)] In the limit where $C_1$ and $C_2$ are orthogonal, $M_2(C_1,C_2;x)$ factorizes
\be
\label{M2lim0}
\lim_{B(C_1,C_2)\to 0} M_2(C_1,C_2;x)=  M_1(C_1;x)\, M_1(C_2;x).
\ee

\item[iii)] $M_2(C_1,C_2;x)$ is \bp{bounded} by
\be
\label{M2Clargex}
|M_2(C_1,C_2;x)|
<\frac{ \left| \frac{B(C_1,x)}{\sqrt{Q(C_1)}} \right| + \left| \frac{B(C_2,x)}{\sqrt{Q(C_2)}} \right|}{\sqrt{Q(x_+)}}
\ e^{-\pi Q(x_+)},
\ee
where $x_+$ is the  orthogonal projection \eqref{defx12} of $x$ on the plane spanned by $\{C_1,C_2\}$.

\item[iv)] $M_2(C_1,C_2;x)$ is a solution of Vign\'eras' equation with $\lambda=0$ on $\IR^{2,n-2}$
away from the aforementioned loci.

\item[v)] $M_2(C_1,C_2;x)$ has the integral representation
\be
\label{defM2Cint}
M_2(C_1,C_2;x) =- \frac{1}{\pi^2}\,
\int_{\left<C_1,C_2\right>-\I x_+} \frac{ \sqrt{\Delta(C_{1\perp2},C_{2\perp1})}}
{B(C_{1\perp2},z)\, B(C_{2\perp1},z)}\, e^{-\pi Q(z)  -2\pi \I B(x,z)} \,\de^2 z,
\ee
where $\de^2 z$ is the uniform measure on the two-plane $\left<C_1,C_2\right>$, normalized
as indicated below \eqref{defE2Cint}, and $\Delta(C_{1\perp2},C_{2\perp1})
=Q(C_{1\perp2}) Q(C_{2\perp1})- B(C_{1\perp2},C_{2\perp1})^2$.
\end{enumerate}
\end{proposition}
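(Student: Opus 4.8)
The plan is to deduce each property from the corresponding statement about the planar function $M_2(\alpha;u_1,u_2)$, in close analogy with the proof of Proposition \ref{PropE2C}. For i), the function $M_2(\alpha;u_1,u_2)$ is $C^\infty$ away from $u_1=0$ and $u_2=\alpha u_1$ by Proposition \ref{EManalytic} i); under the substitution \eqref{u12}--\eqref{u12alpha} these two loci become $B(C_{1\perp 2},x)=0$ and $B(C_{2\perp 1},x)=0$, which yields the asserted smoothness. The rescaling invariance is manifest from Definition \ref{defM2C}, since $\alpha$ and the linear forms $u_1(x),u_2(x)$ are each invariant under $C_i\mapsto \lambda_i C_i$ with $\lambda_i>0$; the symmetries under $C_1\leftrightarrow C_2$ and $C_i\mapsto -C_i$ are precisely the discrete symmetries \eqref{M2discrete} of Corollary \ref{cordiscrete}. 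Property ii) is immediate from the factorization $M_2(0;u_1,u_2)=M_1(u_1)M_1(u_2)$ of Proposition \ref{EManalytic} ii), together with $\alpha\to 0$ as $B(C_1,C_2)\to 0$. Property iv) follows from Proposition \ref{vign2n-2} and the chain-rule identities $x\partial_x M_2=(u_1\partial_{u_1}+u_2\partial_{u_2})M_2(\alpha;u_1,u_2)$ and $B^{-1}(\partial_x,\partial_x)M_2=(\partial_{u_1}^2+\partial_{u_2}^2)M_2(\alpha;u_1,u_2)$, exactly as in the proof of Proposition \ref{PropE2C} v).

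For the uniform bound iii), I would use \eqref{M2m2} to write $M_2(C_1,C_2;x)$ as minus the sum of $m_2(u_1,u_2)$ and $m_2\bigl(\tfrac{u_2-\alpha u_1}{\sqrt{1+\alpha^2}},\tfrac{u_1+\alpha u_2}{\sqrt{1+\alpha^2}}\bigr)$, and apply the bound \eqref{m2bound} to each summand. The key observation is that the two pairs of arguments are related by a rotation, so their sums of squares coincide; the elementary identity $\bigl(\tfrac{u_2-\alpha u_1}{\sqrt{1+\alpha^2}}\bigr)^2+\bigl(\tfrac{u_1+\alpha u_2}{\sqrt{1+\alpha^2}}\bigr)^2=u_1^2+u_2^2=Q(x_+)$ expresses both in terms of the norm \eqref{defx12} of the projection of $x$ onto $\langle C_1,C_2\rangle$. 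Applying \eqref{m2bound} then produces, in both terms, the common factor $e^{-\pi Q(x_+)}/\sqrt{Q(x_+)}$ multiplied by the absolute value of the respective second argument; by \eqref{u12} and \eqref{u12alpha} these are $|B(C_2,x)|/\sqrt{Q(C_2)}$ and $|B(C_1,x)|/\sqrt{Q(C_1)}$, so the triangle inequality yields exactly \eqref{M2Clargex}.

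The main work lies in the integral representation v). I would start from the defining integral \eqref{defM2} and change the scalar contour variables $(z_1,z_2)$ into a vector $z$ ranging over the shifted plane $\langle C_1,C_2\rangle-\I x_+$, using the orthonormal frame $e_1=C_{1\perp 2}/\sqrt{Q(C_{1\perp 2})}$, $e_2=C_2/\sqrt{Q(C_2)}$ underlying \eqref{u12}--\eqref{u12alpha}. Writing $z=z_1 e_1+z_2 e_2$ turns $e^{-\pi(z_1^2+z_2^2)}$ into $e^{-\pi Q(z)}$, the phase $e^{-2\pi\I(u_1 z_1+u_2 z_2)}$ into $e^{-2\pi\I B(x,z)}$, the contour shift $-\I u_1 e_1-\I u_2 e_2$ into $-\I x_+$, and the flat measure $\de z_1\,\de z_2$ into $\de^2 z$ normalized so that $\int e^{-\pi Q(z)}=1$. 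The denominator factorizes by the $z$-analogue of \eqref{u12alpha} as $z_1(z_2-\alpha z_1)=\sqrt{1+\alpha^2}\,\tfrac{B(C_{1\perp 2},z)}{\sqrt{Q(C_{1\perp 2})}}\,\tfrac{B(C_{2\perp 1},z)}{\sqrt{Q(C_{2\perp 1})}}$. The hard part is purely the bookkeeping of normalizations: one must check that the prefactor $\sqrt{Q(C_{1\perp 2})Q(C_{2\perp 1})}/\sqrt{1+\alpha^2}$ collapses to $\sqrt{\Delta(C_{1\perp 2},C_{2\perp 1})}$. Using $Q(C_{1\perp 2})=\Delta(C_1,C_2)/Q(C_2)$ and $Q(C_{2\perp 1})=\Delta(C_1,C_2)/Q(C_1)$ from \eqref{defperp}, together with $1+\alpha^2=Q(C_1)Q(C_2)/\Delta(C_1,C_2)$ and the direct evaluation $\Delta(C_{1\perp 2},C_{2\perp 1})=\Delta(C_1,C_2)^3/(Q(C_1)Q(C_2))^2$, one verifies that the prefactor equals $\Delta(C_1,C_2)^{3/2}/(Q(C_1)Q(C_2))=\sqrt{\Delta(C_{1\perp 2},C_{2\perp 1})}$, which reproduces \eqref{defM2Cint}.
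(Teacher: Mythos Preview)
Your proposal is correct and follows essentially the same approach as the paper: each item is reduced to the corresponding property of the planar function $M_2(\alpha;u_1,u_2)$ via the substitutions \eqref{u12}--\eqref{u12alpha}, with iii) handled by \eqref{M2m2} and the bound \eqref{m2bound}, and v) by the orthonormal-frame change of variables $z=z_1\,C_{1\perp2}/\sqrt{Q(C_{1\perp2})}+z_2\,C_2/\sqrt{Q(C_2)}$. Your treatment of v) is in fact more explicit than the paper's, and your formula $\Delta(C_{1\perp2},C_{2\perp1})=\Delta(C_1,C_2)^3/(Q(C_1)Q(C_2))^2$ is the correct one (the paper's stated version without the square in the denominator is a typo, as your prefactor check confirms).
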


\begin{proof}\noindent
\begin{enumerate}
\item[i-ii)] These statements follow from the corresponding statements about $M_2(\alpha;u_1,u_2)$,
analogously to the proofs of Prop. \ref{PropE2C} i-ii).
\item[iii)] This statement follows from \eqref{m2bound} and \eqref{M2m2}, upon noting that
\be
\tfrac{\pi B(C_1,x)^2}{Q(C_1)}
+\tfrac{\pi B(C_{2\perp 1},x)^2}{Q(C_{2\perp 1})} =
\tfrac{Q(C_1) B(C_2,x)^2+Q(C_2)\, B(C_1,x)^2 - 2 B(C_1,C_2) B(C_1,x) B(C_2,x)}{\Delta(C_1,C_2)} =  Q(x_+).
\ee
\item[iv)] The proof is identical to the proof of Prop. \ref{PropE2C} v).
\item[v)] Upon decomposing
\be
z= z_1 \tfrac{C_{1\perp2}}{\sqrt{Q(C_{1\perp2})}} + z_2 \tfrac{C_2}{\sqrt{Q(C_2)}}\, ,
\qquad
x= x_1 \tfrac{C_{1\perp2}}{\sqrt{Q(C_{1\perp2})}} + x_2 \tfrac{C_2}{\sqrt{Q(C_2)}}+ x_-
\ee
and using $\Delta(C_{1\perp2},C_{2\perp1})= \frac{\Delta(C_1,C_2)^3}{Q(C_1) Q(C_2)}$,
Eq. \eqref{defM2Cint} reduces to \eqref{defM2} with the same parameters $u,x_1,x_2$ as in \eqref{defM2C}.

\end{enumerate}
\end{proof}

\begin{proposition}
The functions $E_2(C_1,C_2;x)$ and $M_2(C_1,C_2;x)$ are related by
\be
\label{E2CCfromM2}
\begin{split}
E_2(C_1,C_2;x) = &\,  M_2(C_1,C_2;x)+ \sign B(C_1,x)\, \sign B(C_2,x)
\\
& +  \sign B(C_{1\perp 2},x) \, M_1(C_2;x)
+ \sign B(C_{2\perp 1},x)\, M_1(C_1;x)
\\
 = &\,  M_2(C_1,C_2;x)- \sign B(C_{1\perp2},x)\, \sign B(C_{2\perp1},x)
\\
&+  \sign B(C_{1\perp 2},x) \, E_1(C_2;x)
+ \sign B(C_{2\perp 1},x)\, E_1(C_1;x) .
\end{split}
\ee
\end{proposition}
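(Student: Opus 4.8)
The plan is to recognize both displayed equalities as covariantizations of the flat relation \eqref{defE2} of Definition~\ref{DefE2}, so that the first equality is obtained by a direct substitution of the boosted variables, while the second requires in addition only an elementary identity between sign functions whose single nontrivial ingredient is the positivity of $\Delta(C_1,C_2)$.

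First I would establish the top line. By Definition~\ref{DefE2C} and the definition \eqref{defM2C} of $M_2(C_1,C_2;x)$, both functions are the flat $E_2(\alpha;u_1,u_2)$ and $M_2(\alpha;u_1,u_2)$ evaluated at $\alpha=\tfrac{B(C_1,C_2)}{\sqrt{\Delta(C_1,C_2)}}$ and at the linear forms \eqref{u12}. I would therefore take \eqref{defE2},
\be
E_2(\alpha;u_1,u_2)=M_2(\alpha;u_1,u_2)+\sign(u_1)\,M_1(u_2)+\sign(u_2-\alpha u_1)\,M_1\!\left(\tfrac{u_1+\alpha u_2}{\sqrt{1+\alpha^2}}\right)+\sign(u_2)\,\sign(u_1+\alpha u_2),
\ee
and insert \eqref{u12} and \eqref{u12alpha}. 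Since $Q(C_1),Q(C_2),Q(C_{1\perp 2}),Q(C_{2\perp 1})$ and $1+\alpha^2$ are all strictly positive, one gets $\sign(u_1)=\sign B(C_{1\perp 2},x)$, $\sign(u_2)=\sign B(C_2,x)$, $\sign(u_2-\alpha u_1)=\sign B(C_{2\perp 1},x)$ and $\sign(u_1+\alpha u_2)=\sign B(C_1,x)$, while $M_1(u_2)=M_1(C_2;x)$ and $M_1\!\big(\tfrac{u_1+\alpha u_2}{\sqrt{1+\alpha^2}}\big)=M_1(C_1;x)$ by the notation $M_1(C;x)=M_1\!\big(\tfrac{B(C,x)}{\sqrt{Q(C)}}\big)$. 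This reproduces the first equality verbatim.

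Next I would obtain the second equality by eliminating $M_1$ in favour of $E_1$ via $M_1(C;x)=E_1(C;x)-\sign B(C,x)$, the covariant form of the identity $E_1=M_1+\sign$. Substituting this for $M_1(C_1;x)$ and $M_1(C_2;x)$ in the first equality and collecting terms, the right-hand side becomes $M_2(C_1,C_2;x)+\sign B(C_{1\perp 2},x)\,E_1(C_2;x)+\sign B(C_{2\perp 1},x)\,E_1(C_1;x)$ plus the sign-valued remainder
\be
\sign B(C_1,x)\,\sign B(C_2,x)-\sign B(C_{1\perp 2},x)\,\sign B(C_2,x)-\sign B(C_{2\perp 1},x)\,\sign B(C_1,x).
\ee
Setting $a=\sign B(C_{1\perp 2},x)$, $b=\sign B(C_2,x)$, $c=\sign B(C_{2\perp 1},x)$, $d=\sign B(C_1,x)$, the required statement is that this remainder equals $-ac=-\sign B(C_{1\perp 2},x)\,\sign B(C_{2\perp 1},x)$, which is exactly the algebraic identity $(a-d)(c-b)=0$.

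The verification of $(a-d)(c-b)=0$ is the only real content and is where I expect the crux to lie. On the domain of definition of $M_2$ one has $a,c\neq0$, so it suffices to exclude having simultaneously $a\neq d$ and $b\neq c$. Using $B(C_{1\perp 2},x)=B(C_1,x)-\tfrac{B(C_1,C_2)}{Q(C_2)}B(C_2,x)$, the condition $\sign B(C_1,x)\neq\sign B(C_{1\perp 2},x)$ (both nonzero) is equivalent to $B(C_1,x)\,B(C_{1\perp 2},x)<0$, i.e. $Q(C_2)\,B(C_1,x)^2<B(C_1,C_2)\,B(C_1,x)\,B(C_2,x)$; likewise $\sign B(C_2,x)\neq\sign B(C_{2\perp 1},x)$ gives $Q(C_1)\,B(C_2,x)^2<B(C_1,C_2)\,B(C_1,x)\,B(C_2,x)$. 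The left-hand sides being positive, both right-hand sides are positive, and multiplying the two inequalities and cancelling $B(C_1,x)^2B(C_2,x)^2>0$ yields $Q(C_1)Q(C_2)<B(C_1,C_2)^2$, contradicting $\Delta(C_1,C_2)>0$ in \eqref{Delta12pos}. Hence $a=d$ or $b=c$, so $(a-d)(c-b)=0$. The degenerate cases $b=0$ or $d=0$ (that is $B(C_2,x)=0$ or $B(C_1,x)=0$) are immediate, since then $B(C_{1\perp 2},x)=B(C_1,x)$ forces $a=d$, respectively $B(C_{2\perp 1},x)=B(C_2,x)$ forces $c=b$. With the sign remainder thus equal to $-ac$, the second equality follows.
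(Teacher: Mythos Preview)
Your proof is correct and follows essentially the same route as the paper: the first equality by direct substitution of \eqref{u12}--\eqref{u12alpha} into \eqref{defE2}, and the second by replacing $M_1=E_1-\sign$ and reducing to the identity $(a-d)(c-b)=0$, which is exactly the paper's sign rule \eqref{42sign2} written in covariant form. The only difference is that the paper simply asserts \eqref{42sign2} as an elementary fact about real $u_1,u_2,\alpha$, whereas you supply a short proof via the inequalities forced by $\Delta(C_1,C_2)>0$; since $\Delta>0$ is part of the standing hypothesis, this is equivalent.
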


\begin{proof} The first equation follows directly from \eqref{defE2}, \eqref{defE2C}, \eqref{defM2C}.
The second equation follows from the first by using
\be
\begin{split}
 &\sign B(C_1,x)\, \sign B(C_2,x)+ \sign B(C_{1\perp2},x)\, \sign B(C_{2\perp1},x)
 \\
 &=\sign B(C_{1\perp 2},x)\,\sign B(C_{2},x)+\sign B(C_{2\perp 1},x)\, \sign B(C_1,x) ,
 \end{split}
\ee
which itself follows from the sign rule
\be
\label{42sign2}
\begin{split}
&\Bigl[\sign u_1- \sign(u_1+\alpha u_2)\Bigr]\, \Bigl[\sign u_2- \sign(u_2-\alpha u_1)\Bigr]=0
\end{split}
\ee
valid for any $u_1,u_2,\alpha\in \IR^3$.

\end{proof}

\bp{
It will be useful to relax in part the conditions \eqref{Delta12pos}, and define the function $E_2(C_1,C_2;x)$
when either $C_1$ or $C_2$ are null, and $\Delta_{12}\geq 0$.
In particular, the last condition implies that in such special cases $B(C_1,C_2)$ necessarily vanishes.
\begin{itemize}
\item For $Q(C_1)=0$, we define $E_2(C_1,C_2;x)$ by taking the limit $Q(C_1)\to 0$ in \eqref{E2lim0},
\be
\label{E2null}
\begin{split}
E_2(C_1,C_2;x) = \sign B(C_1,x)\,   E_1(C_2;x).
\end{split}
\ee
\item The case $Q(C_1)=Q(C_2)=0$  is obtained by further taking the limit $Q(C_2)\to 0$,
\be
\label{E2nulld}
E_2(C_1,C_2;x) = \sign B(C_1,x) \, \sign B(C_2,x) .
\ee
\item When $C_1$ and $C_2$ are both timelike with $\Delta_{12}=0$, we define $E_2(C_1,C_2;x)$
by 
\be
\label{E2Delta0}
E_2(C_1,C_2;x) =    \sign B(C_1,C_2)
+ \sign B(C_{2\perp 1},x)\, \Bigl[ M_1(C_1;x) - \sign B(C_1,C_2) \,  M_1(C_2;x)\Bigr] .
\ee
\end{itemize}
The last definition follows by noting that 
the parameter
$\alpha=\tfrac{B(C_1,C_2)}{\sqrt{\Delta(C_1,C_2)}}$ in \eqref{defM2C} becomes infinite
in the limit $\Delta(C_1,C_2)\to 0$,
so from \eqref{M2m2} we get $M_2(C_1,C_2;x)=0$. Moreover, taking the limit $\alpha\to
\infty$ in \eqref{u12alpha}, we get $\sign B(C_{1\perp 2},x) = - \sign B(C_1,C_2)\, \sign
B(C_{1\perp 2},x)$ and $\sign B(C_1,x) = \sign B(C_1,C_2)\, \sign B(C_2,x)$, leading
to \eqref{E2Delta0}. In the special case where $C_1$ and $C_2$ are collinear, $C_{2\perp 1}=0$
so the second  term in \eqref{E2Delta0} vanishes, leaving only the first term.
}


\section{Indefinite theta series of signature $(2,n-2)$ \label{sec_confth}}
\label{2n-2}

In this section, we turn to the construction of holomorphic theta series of signature $(2,n-2)$
and their modular completion,
using the special functions $E_2$ and $M_2$ introduced in the previous section.

\subsection{Conformal theta series}
\label{conf_theta}
In analogy with
\eqref{PhiZwegers0}, we consider the locally constant function
\be
\label{prod2sign}
\Phi_2(x) =
\frac14\,\Bigl[ \sign B(C_1,x)-\sign B(C'_1,x) \Bigr] \, \Bigl[\sign B(C_2,x)-\sign B(C'_2,x)\Bigr],
\ee
where the four vectors $C_1, C'_1, C_2, C'_2$ are time-like. Our goal is to find
sufficient conditions on these vectors in order for the theta series $\vartheta_{\bfmu}[\Phi_2,0]$
to be convergent and to admit a modular completion. In order to state our result, we let $C_{j'}=C_j'$ and shall
denote by $\Delta_{\cI}$ the determinant of the Gram matrix $B(C_i,C_j)_{i,j\in\cI}$,
where $\cI$ is a subset of indices $\{ 1,1',2,2'\}$. We furthermore let $D_{ij}$ be off-diagonal
cofactors of the Gram matrix $B(C_i,C_j)_{i,j\in \{1,1',2,2' \} }$.
The conditions for convergence of $\vartheta_{\bfmu}[\Phi_2,0]$ are given by the following theorem.

\begin{theorem}
\label{convergence2n}
Assume that the vectors $C_1, C'_1, C_2, C'_2$ satisfy the following conditions:
\begin{subequations}
\label{cond_thm1}
\bea
\label{cond1}&Q(C_1),\ Q(C'_1), \ Q(C_2),\ Q(C'_2)>0,
\\
\label{cond8}&\Delta_{11'22'}>0,
\\
\label{cond9}&D_{11'},\ D_{22'}\geq 0,
\\
\label{cond10}& {\operatorname M} \mbox{ is negative definite},
\eea
\end{subequations}
where ${\operatorname M}$ is the symmetric matrix
\be
\label{defM}
{\operatorname M}= \begin{pmatrix} \Delta_{1'22'} & 0 & D_{12} & D_{12'}
\\
  0 & \Delta_{122'} & D_{1'2} & D_{1'2'}
\\
  D_{12} & D_{1'2} & \Delta_{11'2'} & 0
\\
  D_{12'}& D_{1'2'} & 0 & \Delta_{11'2} \end{pmatrix}.
\ee
Then the theta series $\vartheta_{\bfmu}[\Phi_2,0]$ with kernel \eqref{prod2sign} is convergent. Moreover, it is holomorphic
in $\tau$ and $z$ (in the sense of Remark \ref{rk_hol}),
away from the loci where $B(C,k+b)=0$ for $C\in\{C_1,C'_1,C_2,C'_2\}$ and $k\in\Lambda+\mu+\tfrac12 p$.
\end{theorem}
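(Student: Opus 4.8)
The plan is to reproduce the strategy used for the Lorentzian case in Theorem~\ref{modularLorentz}: to establish absolute convergence it suffices to show that $f(x)=\Phi_2(x)\,e^{\frac{\pi}{2}Q(x)}$ lies in $L^1(\Lambda\otimes\IR)$, which I would do by dominating $Q(x)$ by a negative definite quadratic form on the support of the kernel. Since $|\Phi_2(x)|\leq 1$ and $\Phi_2(x)$ vanishes unless both sign differences in \eqref{prod2sign} are nonzero, the support is contained in the set where $B(C_1,x)B(C'_1,x)\leq 0$ and $B(C_2,x)B(C'_2,x)\leq 0$. Writing $y_i=B(C_i,x)$ for $i\in\{1,1',2,2'\}$ (with the convention $C_{j'}=C'_j$), these two conditions read $y_1y_{1'}\leq 0$ and $y_2y_{2'}\leq 0$. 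Note moreover that $\Phi_2(x)$ depends only on the orthogonal projection $x_V$ of $x$ onto $V=\Span\{C_1,C'_1,C_2,C'_2\}$, since $y_i=B(C_i,x_V)$.

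The algebraic heart of the argument is to relate $Q(x_V)$ to the matrix $\operatorname M$ of \eqref{defM}. Condition \eqref{cond8} forces the Gram matrix $G=\bigl(B(C_i,C_j)\bigr)_{i,j\in\{1,1',2,2'\}}$ to be invertible, so $V$ is four-dimensional and $y\mapsto x_V$ is a linear isomorphism onto $V$ with $Q(x_V)=y^{T}G^{-1}y=\Delta_{11'22'}^{-1}\,y^{T}\operatorname{adj}(G)\,y$. The entries of the adjugate are exactly the diagonal subdeterminants $\Delta_{\cI}$ and the off-diagonal cofactors $D_{ij}$ appearing in \eqref{defM}; the matrix $\operatorname M$ is obtained from $\operatorname{adj}(G)$ by deleting (setting to zero) the $(1,1')$ and $(2,2')$ entries. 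Hence
\be
\Delta_{11'22'}\,Q(x_V)=y^{T}\operatorname M\,y+2D_{11'}\,y_1y_{1'}+2D_{22'}\,y_2y_{2'}.
\ee
On the support, $y_1y_{1'}\leq 0$ and $y_2y_{2'}\leq 0$, while \eqref{cond9} gives $D_{11'},D_{22'}\geq 0$; the last two terms are therefore non-positive, and using $\Delta_{11'22'}>0$ together with the negative definiteness \eqref{cond10} of $\operatorname M$ I obtain $Q(x_V)\leq \Delta_{11'22'}^{-1}\,y^{T}\operatorname M\,y$, a negative definite form in $x_V$.

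It remains to handle the directions orthogonal to $V$. Because all four vectors are time-like by \eqref{cond1} and $\Delta_{11'22'}>0$, the restriction of $Q$ to $V$ must have signature $(2,2)$ (the ambient signature $(2,n-2)$ allows at most two positive directions, while signature $(1,3)$ would give $\det G<0$), so $V^{\perp}$ is negative definite and $Q(x)=Q(x_V)+Q(x_\perp)$ with $Q(x_\perp)<0$ for $x_\perp\neq 0$. Combining with the previous bound shows that on the support of $\Phi_2$ the quadratic form $Q(x)$ is dominated by a globally negative definite form, so $f(x)$ is bounded by a Gaussian and lies in $L^1$, giving absolute convergence. Holomorphy in $\tau$ and $z$ then follows exactly as in Theorem~\ref{modularLorentz}, since $\Phi_2$ is locally constant while the remaining factors $q^{-\frac12 Q(\bfk+\bfb)}\,\expe{B(\bfc,\bfk+\tfrac12\bfb)}$ in \eqref{Vignerasth} are holomorphic away from the loci where $B(C,k+b)=0$. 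The main obstacle I anticipate is the bookkeeping in the second paragraph: correctly identifying $\operatorname M$ with the adjugate of $G$ with two entries deleted, and verifying that $V$ has signature $(2,2)$ so that its orthogonal complement is genuinely negative definite — this is the step where all four convergence hypotheses \eqref{cond1}--\eqref{cond10} are used in concert.
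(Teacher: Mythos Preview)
Your proposal is correct and follows essentially the same approach as the paper. The only difference is presentational: the paper packages the key computation via the $5\times 5$ Gram determinant $\Delta(x,C_1,C'_1,C_2,C'_2)=\Delta_{11'22'}Q_-(x)-X{\operatorname M}X^t$ and uses the signature argument to force $\Delta(x,\ldots)\leq 0$, while you decompose $x=x_V+x_\perp$ and compute $Q(x_V)=y^TG^{-1}y$ directly via the adjugate; but since $\Delta(x,\ldots)=\Delta_{11'22'}(Q(x)-Q(x_V))$ by the Schur complement, your bounding form $\Delta_{11'22'}^{-1}y^T{\operatorname M}y+Q(x_\perp)$ is exactly the paper's $Q_-(x)$, and your identification of ${\operatorname M}$ as $\operatorname{adj}(G)$ with the $(1,1')$ and $(2,2')$ entries removed is precisely what makes the paper's expansion of the Gram determinant work.
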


\begin{proof}
To establish the convergence of $\vartheta_{\bfmu}[\Phi_2,0]$, we note that \eqref{cond1} and
\eqref{cond8} imply that $\left<C_1,C'_1,C_2,C'_2\right>$ span a signature $(2,2)$ four-plane
in $\IR^{2,n-2}$. Thus, for any $x$ linearly independent from these vectors,
$\left<x, C_1,C'_1,C_2,C'_2\right>$
span a signature $(2,3)$ five-plane, so $\Delta(x,C_1,C'_1,C_2,C'_2)<0$. The latter evaluates to
\be
\label{Gram50mat}
\begin{split}
\Delta(x,C_1,C'_1,C_2,C'_2) = &  \Delta_{11'22'}\, Q_-(x)
-X {\operatorname M} X^t,
\end{split}
\ee
where
\be
Q_-(x) :=   Q(x) -  2\, \frac{B(C_1,x) B(C'_1,x)\, D_{11'}+ B(C_2,x) B(C'_2,x)\, D_{22'}}{ \Delta_{11'22'}}
\ee
and $X$ is the \bp{row} vector  $\bigl(B(C_1,x), B(C'_1,x), B(C_2,x), B(C'_2,x)\bigr)$. Since
by assumption $M$ is negative definite, it follows that
$Q_-(x)<0$. If instead $x$ lies in the four-plane $\left< C_1,C'_1,C_2,C'_2 \right>$,
then $\Delta(x,C_1,C'_1,C_2,C'_2)=0$, so $Q_-(x)$ vanishes if $X=0$, which implies $x=0$.
Thus, the quadratic form $Q_-(x)$ is negative definite.
Now, the only vectors $x$ for which $\Phi_2(x)\neq 0$ are such that $B(C_1,x) B(C'_1,x)\leq 0$ and $B(C_2,x) B(C'_2,x)\leq 0$.
Since $D_{11'}$ and $D_{22'}$ are assumed to be both $\geq 0$ and $\Delta_{11'22'}>0$,
it follows that $Q(x)\leq Q_-(x)$ unless $\Phi_2(x)=0$. Thus, $f(x)=\Phi_2(x)\, e^{\tfrac{\pi}{2}Q(x)}$
is dominated by $e^{\tfrac{\pi}{2}Q_-(x)}$ and so lies in $L^1(\Lambda\otimes\IR)$.
This ensures the convergence of $\vartheta_{\bfmu}[\Phi_2]$. Holomorphy in $\tau$ and $z$
follows from the fact that $\Phi_2(x)$ is locally constant.
\end{proof}

The next theorem gives sufficient conditions for the existence of a modular completion of
$\vartheta_{\bfmu}[\Phi_2,0]$ using the double error function $E_2(C_1,C_2,x)$ defined in \eqref{defE2}.
\begin{theorem}
In addition to the conditions in Theorem \ref{convergence2n},
we require that $C_1, C'_1, C_2, C'_2$ satisfy the following conditions:
\begin{subequations}
\label{cond_thm2}
\bea
\label{cond6}\Delta_{12},\Delta_{1'2},\ \Delta_{12'},\ \Delta_{1'2'} &>& 0  ,
\\
\label{cond2}
B(C_{2\perp 1},C_{2'\perp 1})&=&
B(C_2,C_2')-\tfrac{B(C_1,C_2) B(C_1,C_2')}{Q(C_1) } \ \geq 0,
\\
\label{cond3}
B(C_{1\perp 2},C_{1'\perp2})&=&
 B(C_1,C_1')-\tfrac{B(C_2,C_1) B(C_2,C_1')}{Q(C_2)}
\ \geq 0,
\\
\label{cond4}
B(C_{2\perp 1'},C_{2'\perp 1'})&=&
B(C_2,C'_2)-\tfrac{ B(C'_1,C'_2) B(C'_1,C_2)}{Q(C'_1)}\ \geq 0,
\\
\label{cond5}
B(C_{1\perp 2'},C_{1'\perp2'})&=&
B(C_1,C_1')-\tfrac{B(C'_2,C_1) B(C'_2,C_1')}{Q(C'_2)}\ \geq 0.
\eea
\end{subequations}
Then, the theta series $\vartheta_{\bfmu}[\widehat\Phi_2,0]$  with kernel
\be
\label{prod2signE}
\widehat\Phi_2(x) = \frac14\, \Bigl[ E_2(C_1,C_2;x) - E_2(C_1,C_2';x) - E_2(C'_1,C_2;x)+E_2(C'_1,C'_2;x)\Bigr]
\ee
is a non-holomorphic vector-valued Jacobi form of weight
$(\tfrac{n}{2},0)$. Its shadow  is the non-holomorphic Lorentzian
theta series $\vartheta_{\bfmu}[\Psi_2,-2]$ with kernel
\be
\label{prod2signshadow}
\begin{split}
\Psi_2(x) =& \,\frac{\I}{8}\, \Biggl\{ \tfrac{B(C_1,x)}{\sqrt{Q(C_1)}}\, e^{- \tfrac{\pi B(C_1,x)^2}{Q(C_1)}}
\Bigl[ E_1(C_{2\perp 1};x)-E_1(C_{2'\perp 1};x)\Bigr]
\Biggr.
\\&\,
+\tfrac{B (C_2,x)}{\sqrt{Q(C_2)}}\, e^{- \tfrac{\pi B(C_2,x)^2}{Q(C_2)}}
\Bigl[E_1(C_{1\perp2};x)- E_1(C_{1'\perp2};x)\Bigr]
\\&\,
+
 \tfrac{B(C'_1,x)}{\sqrt{Q(C'_1)}}\,
e^{-\tfrac{\pi B(C'_1,x)^2}{Q(C'_1)}} \Bigl[E_1(C_{2'\perp1'};x)- E_1(C_{2\perp1'};x)\Bigr]
\\
&\, \Biggl.
+ \tfrac{B(C'_2,x)}{\sqrt{Q(C'_2)}}\,
e^{-\tfrac{\pi B(C'_2,x)^2}{Q(C'_2)}} \Bigl[E_1(C_{1'\perp2'};x)- E_1(C_{1\perp2'};x)\Bigr]
  \Biggr\}.
\end{split}
\ee

\end{theorem}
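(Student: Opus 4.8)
The plan is to verify that the smooth kernel $\widehat\Phi_2$ of \eqref{prod2signE} satisfies the two hypotheses of Vign\'eras' theorem with $\lambda=0$, after which the modular statement follows at once and the shadow can be read off from \eqref{covdertheta}. Condition ii) requires no work: by Proposition \ref{PropE2C} v) each summand $E_2(C_i,C_j;x)$ solves Vign\'eras' equation \eqref{Vigdif} with $\lambda=0$, hence so does their linear combination $\widehat\Phi_2$. The whole difficulty is concentrated in condition i), that is, in showing that $f(x)=\widehat\Phi_2(x)\,e^{\frac\pi2 Q(x)}$, together with its images under differential operators of order $\le 2$ and under multiplication by polynomials of degree $\le 2$, lies in $L^1\cap L^2(\Lambda\otimes\IR)$. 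Because every bound below will be Gaussian, polynomial prefactors and derivatives cannot spoil integrability, so it is enough to dominate the exponent of each constituent of $f$ by a negative-definite quadratic form.

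To isolate these constituents I would first apply the decomposition \eqref{E2CCfromM2} to each of the four boosted error functions, writing $E_2(C_i,C_j;x)$ as the sum of $M_2(C_i,C_j;x)$, the locally constant product $\sign B(C_i,x)\,\sign B(C_j,x)$, and two terms $\sign B(C_{i\perp j},x)\,M_1(C_j;x)$ and $\sign B(C_{j\perp i},x)\,M_1(C_i;x)$. Forming the alternating combination \eqref{prod2signE}, the four $\sign\cdot\sign$ pieces recombine precisely into the locally constant kernel $\Phi_2(x)$ of \eqref{prod2sign}, while the $M_1$ pieces regroup into four terms, one for each reference vector, of the schematic form $M_1(C_2;x)\,[\sign B(C_{1\perp2},x)-\sign B(C_{1'\perp2},x)]$ and its analogues. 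Thus $\widehat\Phi_2=\Phi_2+\tfrac14(\text{four }M_2\text{ terms})+\tfrac14(\text{four regrouped }M_1\text{ terms})$. The contribution $\Phi_2(x)\,e^{\frac\pi2 Q(x)}$ already lies in $L^1$ by Theorem \ref{convergence2n}, which is exactly where the hypotheses \eqref{cond_thm1} enter; the remaining job is to control the $M_2$ terms and the regrouped $M_1$ terms.

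For the $M_2$ terms I would invoke the uniform bound \eqref{M2Clargex}: each $M_2(C_i,C_j;x)$ is suppressed like $e^{-\pi Q(x_+)}$, where $x_+$ is the orthogonal projection of $x$ onto the two-plane $\langle C_i,C_j\rangle$. Since \eqref{cond6} makes every such plane time-like, the exponent $\tfrac\pi2 Q(x)-\pi Q(x_+)=\tfrac\pi2\bigl(Q(x_-)-Q(x_+)\bigr)$ is negative definite ($Q(x_+)\ge0$ on the time-like plane, $Q(x_-)\le0$ on its negative-definite complement), so each $M_2$ term is integrable against $e^{\frac\pi2Q}$. The regrouped $M_1$ terms are the heart of the matter and, I expect, the main obstacle. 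Consider the representative term $M_1(C_2;x)\,[\sign B(C_{1\perp2},x)-\sign B(C_{1'\perp2},x)]$. Using the uniform estimate $|M_1(C_2;x)|\le e^{-\pi B(C_2,x)^2/Q(C_2)}$, the Gaussian weight $Q$ is effectively replaced by $Q_{C_2}(x)=Q(x)-2B(C_2,x)^2/Q(C_2)$, of Lorentzian signature $(1,n-1)$. The sign factor is supported where $B(C_{1\perp2},x)\,B(C_{1'\perp2},x)\le0$, and since $C_{1\perp2},C_{1'\perp2}$ both lie in the subspace $C_2^{\perp}$, where they are time-like of norms $\Delta_{12}/Q(C_2)$ and $\Delta_{1'2}/Q(C_2)$ (positive by \eqref{cond6}) and where $Q$ and $Q_{C_2}$ coincide, this is exactly the configuration of the Lorentzian convergence argument in Theorem \ref{modularLorentz} i): on the support of the sign factor, $Q_{C_2}$ is dominated by a negative-definite form, giving $L^1$ decay. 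The three remaining regrouped $M_1$ terms are handled identically, the conditions \eqref{cond2}, \eqref{cond4}, \eqref{cond5} playing for $C_1$, $C'_1$, $C'_2$ the role that \eqref{cond3} plays for $C_2$. It is worth noting that \eqref{cond6} already forces each projected pair to be genuinely time-like in the relevant Lorentzian subspace, so that by reverse Cauchy--Schwarz the inequalities \eqref{cond2}--\eqref{cond5}, though written as $\ge0$, in fact supply the strict positivity $B(C_{1\perp2},C_{1'\perp2})>0$ (etc.) that the Lorentzian estimate of Theorem \ref{modularLorentz} i) requires; the whole set of conditions \eqref{cond_thm2} is thus needed in concert.

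Once condition i) is established, Vign\'eras' theorem yields that $\vartheta_{\bfmu}[\widehat\Phi_2,0]$ is a vector-valued Jacobi form of weight $(\tfrac n2,0)$. For the shadow I would apply the operator $\tfrac{\I}{4}x\partial_x$ (with $\lambda=0$) to $\widehat\Phi_2$ and substitute the shadow \eqref{E2Cshadow} of each $E_2(C_i,C_j;x)$. Collecting the results by their common Gaussian factor $\tfrac{B(C,x)}{\sqrt{Q(C)}}\,e^{-\pi B(C,x)^2/Q(C)}$, the coefficient of the $C_1$ factor becomes $E_1(C_{2\perp1};x)-E_1(C_{2'\perp1};x)$, and similarly for $C_2,C'_1,C'_2$, reproducing exactly the kernel $\Psi_2$ of \eqref{prod2signshadow}. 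By \eqref{covdertheta} the shadow of $\vartheta_{\bfmu}[\widehat\Phi_2,0]$ is then $\vartheta_{\bfmu}[\Psi_2,-2]$, the asserted non-holomorphic Lorentzian theta series.
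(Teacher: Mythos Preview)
Your proposal is correct and follows essentially the same route as the paper: decompose $\widehat\Phi_2-\Phi_2$ via \eqref{E2CCfromM2} into four $M_2$ terms and four regrouped $\sign\cdot M_1$ terms, bound the $M_2$ pieces using \eqref{M2Clargex} together with \eqref{cond6}, and treat each $\sign\cdot M_1$ piece by the Lorentzian convergence argument of Theorem~\ref{modularLorentz} on the hyperplane orthogonal to the relevant $C$, invoking \eqref{cond2}--\eqref{cond5}. Your explicit remark that \eqref{cond6} plus reverse Cauchy--Schwarz upgrades the $\ge 0$ in \eqref{cond2}--\eqref{cond5} to the strict positivity actually required by Theorem~\ref{modularLorentz} is a nice clarification that the paper leaves implicit.
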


\begin{proof} We first address the convergence of $\vartheta_{\bfmu}[\widehat\Phi_2]$.
Using \eqref{E2CCfromM2}, the difference between \eqref{prod2signE} and \eqref{prod2sign} can be written as
\be
\label{prod2signadd}
\begin{split}
\widehat\Phi_2-\Phi_2 = &\, \frac14\, \Biggl\{  M_2(C_1,C_2;x) - M_2(C_1,C_2';x) - M_2(C'_1,C_2;x)+M_2(C'_1,C'_2;x)
\Biggr. \\
&\, + \Bigl[ \sign B(C_{2\perp1},x) -  \sign B(C_{2'\perp1},x) \Bigr]
M_1(C_1;x)
\\
&\, + \Bigl[  \sign B(C_{1\perp2},x) - \sign B (C_{1'\perp2},x)   \Bigr]
M_1(C_2;x)
\\
&\, + \Bigl[ \sign B(C_{2'\perp1'},x) -  \sign B(C_{2\perp1'},x) \Bigr]
M_1(C'_1;x)
\\
&\, \Biggl. +\Bigl[ \sign B(C_{1'\perp2'},x) - \sign B(C_{1\perp2'},x) \Bigr]
M_1(C'_2;x)
\Biggr\}.
\end{split}
\ee
To see that the theta series based on the first term $M_2(C_1,C_2,x)$ in
\eqref{prod2signadd} is convergent, we use the uniform bound \eqref{M2Clargex}.
The condition (\ref{cond6}) implies that $\left<C_1,C_2\right>$
spans a  signature $(2,0)$ plane. Therefore
$\Delta(x,C_1,C_2)\leq 0$ for any $x$, with the equality saturated when $x$ lies in this plane.
Evaluating the determinant as in \eqref{DxCC}, this
shows that $Q(x)-Q(x_+)$ is negative semi-definite, and that $Q(x)-2Q(x_+)$ is negative definite.
Thus, $f(x):=  M_2(C_1,C_2;x) \, e^{\tfrac{\pi}{2}Q(x)}$
is dominated by $e^{\tfrac{\pi}{2}[Q(x)-2Q(x_+)]}$ so lies in $L^1(\Lambda\otimes \IR)$.
The same argument applies to the  three other terms on the first line of \eqref{prod2signadd}.

For the second line of \eqref{prod2signadd}, we note that the vectors $C_{2\perp1},C_{2'\perp1}$
are time-like vectors in the signature $(1,n-2)$  hyperplane $\left< C_1\right>^\perp$
orthogonal to $C_1$, with positive inner product due to \eqref{cond2}.
The same argument which was used to prove the convergence of \eqref{PhiZwegers0}, shows that
$f(x)=\bigl[ \sign B(C_{2\perp1},x) -  \sign B(C_{2'\perp1},x) \bigr]
e^{\tfrac{\pi}{2} Q_\perp(x)}$ lies in $L^1(\left< C_1\right>^\perp)$,
where $Q_\perp(x)$ is the norm of the projection of
$x$ on $\left< C_1\right>^\perp$. On the other hand,
$\Bigl|M_1(x) e^{\tfrac{\pi x^2}{2}}\Bigr|\leq e^{-\tfrac{\pi x^2}{2}}$ is
in $L_1(\IR)$ on the orthogonal complement $\left< C_1\right>$. Thus,
$f(x)=\bigl[ \sign B(C_{2\perp1},x)-   \sign B(C_{2'\perp1},x)\bigr] M_1(C_1;x)e^{\tfrac{\pi}{2}Q(x)}$
lies in $L^1(\Lambda\otimes \IR)$.
Repeating this reasoning for the other three contributions in \eqref{prod2signadd} and
combining this with the result in the previous paragraph, we see that
the theta series $\vartheta_{\bfmu}[\widehat\Phi_2-\Phi_2]$ is convergent.

Finally, from \eqref{VigdifE2C} we see that the kernel \eqref{prod2signE} satisfies condition ii)
of Vign\'eras' theorem with $\lambda=0$.
It follows that $\vartheta_{\bfmu}[\widehat\Phi_2]$ is a non-holomorphic
modular form of weight $(\tfrac{n}{2},0)$. Its shadow follows easily from \eqref{E2Cshadow}.
\end{proof}

\begin{remark}
A few comments on the conditions \eqref{cond_thm1} and \eqref{cond_thm2} are in order:
\begin{itemize}
\item The condition ${\operatorname M} <0$ implies that all diagonal elements of ${\operatorname M}$ are strictly negative.
Thus, assuming that \eqref{cond1} holds, each of $\left<C_1',C_2,C_2'\right>$, $\left<C_1,C_2,C_2'\right>$,
$\left<C_1,C_1',C_2\right>$, $\left<C_1,C_1',C_2'\right>$ span a signature $(2,1)$ three-plane in $\IR^{2,n-2}$.
The restriction of $B$ on $\left<C_1,C_1',C_2,C_2'\right>$ is then either non-degenerate
of signature $(2,2)$,
in which case $\Delta_{11'22'}>0$, or degenerate, in which
case $\Delta_{11'22'}=0$. The condition \eqref{cond8} ensures that it is non-degenerate.

\item In the special case where $B(C_1,C_2)=B(C_1,C'_2)=B(C'_1,C_2)=B(C'_1,C'_2)=0$, the conditions
\eqref{cond_thm1} and \eqref{cond_thm2} reduce to \eqref{cond1} along with
$B(C_1,C'_1), B(C_2,C'_2)\geq 0$ and  $\Delta_{11'}, \Delta_{22'}<0$.

\item The conditions \eqref{cond_thm1}, \eqref{cond_thm2} all follow from the stronger conditions
\begin{subequations}
\label{cond_alt}
\bea
& Q(C_1),\ Q(C'_1), \ Q(C_2),\ Q(C'_2)>0,\quad B(C_1,C'_1), \ B(C_2,C'_2)\geq 0,
\\
& \Delta_{12},\ \Delta_{1'2},\ \Delta_{12'},\ \Delta_{1'2'} > 0, \quad
\Delta_{11'}, \ \Delta_{22'} < 0 ,
\\
&D_{11'},\ D_{22'}\geq 0,
\\
& {\operatorname M} \mbox{ is negative definite}.
\eea
\end{subequations}
This is easily seen by writing
\be
\begin{split}
\label{Delta122'}
0> \Delta_{122'} =&\, Q(C_2)\,\Delta_{12'}+Q(C_2')\,\Delta_{12}-Q(C_1)\,\Delta_{22'}
\\&\,
-2B(C_2,C_2')\,\left[ Q(C_1) B(C_2,C_2')-B(C_1,C_2)B(C_1,C_2') \right],
\end{split}
\ee
and similarly for $ \Delta_{122'}, \Delta_{11'2'},  \Delta_{11'2} $. However,
it is easy to find solutions of  \eqref{cond_thm1} which violate the conditions
$\Delta_{11'}, \ \Delta_{22'} < 0$ in \eqref{cond_alt}.

\item Numerical searches seem to indicate that the
conditions \eqref{cond_thm2} follow
from \eqref{cond_thm1}, but we have not been able to establish this analytically.

 \end{itemize}
\end{remark}

\begin{remark}
For the case at hand, Eq. \eqref{wPhiperiod} shows that
the non-holomorphic theta series $\vartheta_{\bfmu}[\widehat\Phi_2]$ decomposes into
the sum of the holomorphic theta series $\vartheta_{\bfmu}[\Phi_2]$ and an Eichler integral
of the Lorentzian theta series $\vartheta_{\bfmu}[\Psi_2]$. Both terms transforms anomalously
under modular transformations, but the anomalies cancel in the sum.
\end{remark}

\subsection{Null limit\label{sec_null}}

Let us now extend this construction to the case where one vector, say $C'_1$, degenerates
to a primitive null vector in $\Lambda$, while the other vectors $C_1,C_2,C'_2$ remain time-like.
We assume furthermore that $B(C'_1,C_2)=B(C'_1,C'_2)=0$, in order to ensure that $\Delta_{1'2},\Delta_{1'2'}\geq 0$.
As a result, the determinant $\Delta_{11'22'}$ and cofactors $D_{ij}$ simplify to
\be
\begin{split}
\Delta_{11'22'}=&-B(C_1,C'_1)^2 \Delta_{22'},\quad\\
D_{11'}=&-B(C_1,C'_1) \Delta_{22'}, \quad
D_{22'}=B(C_1,C'_1)^2 B(C_2,C_2'), \\
D_{1'2}=&\,B(C_1,C'_1) [B(C_1,C_2) Q(C'_2)-B(C_1,C'_2) B(C_2,C'_2)],\\
D_{1'2'}=&\,B(C_1,C'_1) [B(C_1,C'_2) Q(C_2)-B(C_1,C_2) B(C_2,C'_2)] ,
\end{split}
\ee
and $\Delta_{1'22'}=D_{12}=D_{12'}=0$. The conditions \eqref{cond_thm1} for the convergence
of $\vartheta_\mu[\Phi_2]$ now become
\begin{subequations}
\label{condl}
\bea
\label{cond1l}&Q(C_1),\,  Q(C_2),\, Q(C'_2)>0,\quad  Q(C'_1)=0,
\\
\label{cond234I}& \Delta_{22'}<0, \quad
B(C_1,C'_1)>0,\quad  B(C_2,C'_2)\geq 0 , 
\\
\label{cond5I}& {\operatorname M}_0 \mbox{ is negative definite},
\eea
\end{subequations}
where ${\operatorname M}_0$ is the lower right $3\times 3$ matrix of ${\operatorname M}$,
\be
\label{defK}
{\operatorname M}_0= \begin{pmatrix}
  \Delta_{122'} & D_{1'2} & D_{1'2'}
\\ D_{1'2} & \Delta_{11'2'} & 0
\\
 D_{1'2'} & 0 & \Delta_{11'2} \end{pmatrix}.
\ee

For later use, we deduce a few inequalities from these conditions.
The $(2,2)$ minor of ${\operatorname M}_0$ equals $-B(C_1,C'_1)^2\Delta_{12}\,\Delta_{22'}$,
and the $(3,3)$ minor equals $-B(C_1,C'_1)^2\Delta_{12'}\,\Delta_{22'}$.
Therefore \eqref{cond5I} implies $\Delta_{12}>0$ and $\Delta_{12'}>0$.
From these inequalities and $\Delta_{122'}<0$ (by \eqref{cond5I}),
we deduce that the first line on the right-hand side of  \eqref{Delta122'}
is negative definite, and therefore $B(C_{2\perp 1}, C_{2'\perp 1})>0$.
Noting that $\Delta_{122'}=Q(C_1)\, \Delta(C_{2\perp 1},C_{2'\perp 1})$,
we see that the condition $\Delta_{122'}<0$ implies that $C_{2\perp 1}$ and $C_{2'\perp 1}$
span a subspace of signature (1,1) inside
$\left< C_1\right>^\perp$.

Following the idea of the proof in \cite[\S 2.2]{Zwegers-thesis},  in $\vartheta[\Phi_2]$ we
decompose $k\in \Lambda+\mu+\tfrac{\bfp}{2}$
into $k=k_0 + m_1 C'_1$ with $m_1\in \IZ$ and $k_0\in \Lambda_{1}$ where
\be
\Lambda_1=\left\{k_0\in \Lambda+\mu+\tfrac{\bfp}{2} \ :\ \tfrac{B(C_1,k_0+b)}{B(C_1,C'_1)}\in [0,1)\right\}.
\label{latticeLC}
\ee
Replacing the sum over $k$ by a sum over $k_0$ and $m_1$, $\vartheta_\mu[\Phi_2]$ takes the form
\be
\begin{split}
\theta_\mu[\Phi_2](\tau,b,c)=&\,
\sum_{m_1\in \IZ}\sum_{\bfk_0\in \Lambda_{1}}
(-1)^{B(k_0,p)+m_1B(C'_1,p)}\, \Phi_2(k_0+m_1 C'_1+b)
\\
&\,  \times \ q^{-\tfrac12 Q(k_0+b)-m_1 B(k_0+b,C_1')} \expe{B(c,k_0+m_1C'_1+\tfrac12\, b)}
\end{split}
\ee
with
\be
\begin{split}
\Phi_2(k_0+m_1C'_1+b)=&\, \frac{1}{4}\,\Bigl[\sign\!\left(B(C_1,k_0+b)+m_1B(C_1,C_1')\right)-\sign B(C_1',k_0+b)\Bigr]
\\
&\, \times
\Bigl[\sign B(C_2,k_0+b)-\sign B(C_2',k_0+b)\Bigr].
\end{split}
\ee
The sum over $m_1$ converges provided  $B(C_1',k_0+b)\neq 0$ for all $k_0\in \Lambda_{1}$. Summing up
the geometric series, we obtain
\be
\label{indefresummed}
\begin{split}
\vartheta_\mu[\Phi_2](\tau,b,c) =&\, \frac12\sum_{\bfk_0\in \Lambda_{1}}
(-1)^{B(\bfk_0,\bfp)}\,\Phi_1(k_0+b)\,q^{-\frac12 Q(\bfk_0+\bfb)}\,\expe{B(\bfc,\bfk_0+\tfrac12\bfb)}
\\
& \, \times \left(\frac{1}{1-\expe{B(C'_1, \bfc-\tau(\bfk_0+b)+\frac{p}{2})}}-\frac12\, \delta_{B(C_1,k_0+b),0}\right),
\end{split}
\ee
where $\Phi_1(x)=\tfrac12\, \bigl[\sign B(C_2,x)-\sign B(C'_2,x) \bigr]$ and $\delta_{i,j}$ is
the Kronecker delta function.

The proof of convergence proceeds as in \cite{Zwegers-thesis}. The second line in \eqref{indefresummed}
is bounded for all $k_0$ in the sum since $B(C_1',k_0+b)\neq 0$. It
remains to show that $Q(x)$ is negative definite for $x\in
\left< C_1\right>^\perp$ and over the range where $\Phi_1(x)$ is non-vanishing.
To this end, note that $Q$ has signature $(1,n-2)$ on $\left< C_1 \right>^\perp$.
As noted below \eqref{defK}, the projections of $C_2$ and $C'_2$  on
$\left< C_1 \right>^\perp$ satisfy $Q(C_{2\perp 1}),Q(C_{2'\perp 1}), B(C_{2\perp 1},C_{2'\perp 1})>0$,
therefore the same argument as in the  proof of Theorem \ref{modularLorentz} shows that
the sum over $k_0$ is absolutely convergent.

Note that the convergence of $\vartheta[\Phi_2]$ requires $B(C_1',k_0+b)\neq 0$. However,
setting $z=b\tau-c$, the  right-hand side of \eqref{indefresummed} can be analytically continued
to the larger domain $z\in\IC^n\backslash \cP_{C'_1,\mu}$ where $\cP_{C'_1,\mu}$ is the divisor
\be
\label{poleloc}
\cP_{C'_1,\mu} = \left\{ z \in \IC^{n} :\
\exists k\in\Lambda_1,\
B(C'_1, z+\tau\bfk - \tfrac{\bfp}{2}) \in \IZ\,\right\} .
\ee

Using \eqref{E2CCfromM2}, the additional terms in the modular completion
$\vartheta_\mu[\widehat \Phi_2]$ can be written as
\be
\label{prod2signaddl}
\begin{split}
\widehat\Phi_2-\Phi_2 = &\, \frac14\,  \Bigl\{  M_2(C_1,C_2;x) - M_2(C_1,C_2';x)
\Bigr.
+ \Bigl[ \sign B(C_{2\perp1},x) - \sign B(C_{2'\perp1},x) \Bigr]
M_1(C_1;x)
\\&\,
+ \Bigl[  \sign B(C_{1\perp2},x) - \sign B(C_{1'},x) \Bigr]
M_1(C_2;x)
\\&\,
\Bigl.
+\Bigl[ \sign B(C_{1'},x) - \sign B(C_{1\perp2'},x) \Bigr]
M_1(C'_2;x)
\Bigr\}.
\end{split}
\ee
Convergence follows from the conditions \eqref{condl} and inequalities below \eqref{defK}.
The shadow of $\vartheta_\mu[\widehat \Phi_2]$ is now the theta series
$\vartheta_{\bfmu}[\Psi_2]$ with kernel
\be
\label{prod2signshadowl}
\begin{split}
\Psi_2(x) =&\, \frac{\I}{8}\, \Biggl\{ \tfrac{B(C_1,x)}{\sqrt{Q(C_1)}}\, e^{-\frac{\pi B(C_1,x)^2}{Q(C_1)}}
\Bigl[E_1(C_{2\perp1};x)-E_1(C_{2'\perp1};x)\Bigr]
\Biggr.
\\
&\,
+ \tfrac{B(C_2,x)}{\sqrt{Q(C_2)}}\, e^{- \frac{\pi B(C_2,x)^2}{Q(C_2)}}
\Bigl[ E_1(C_{1\perp2};x)- \sign B(C_1',x)\Bigr]
\\
&\,\Biggl.
+ \tfrac{B(C'_2,x)}{\sqrt{Q(C'_2)}}\,
e^{- \frac{\pi B(C'_2,x)^2}{Q(C'_2)}}
\Bigl[ \sign B(C_1',x) -E_1(C_{1\perp2'},x)\Bigr]
\Biggr\}.
\end{split}
\ee

\subsection{Double null limit}
\label{doublenull}

Finally, we consider the case where both $C_1'$ and $C_2'$ degenerate to
linearly independent null vectors in $\Lambda$, with $B(C'_1,C_2)=B(C'_1,C'_2)=B(C_1,C'_2)=0$.
The vectors $C_1$ and $C_2$ remain time-like. Then the determinants and cofactors reduce to
\be
\begin{split}
\Delta_{11'22'}=&B(C_1,C'_1)^2\,B(C_2,C'_2)^2, \quad
D_{1'2'}= -B(C_1,C'_1) B(C_1,C_2) B(C_2,C'_2),
\\
D_{11'}=&\,B(C_1,C'_1) \,B(C_2,C'_2)^2, \quad
D_{22'}=B(C_1,C'_1)^2 B(C_2,C_2'),
\end{split}
\ee
and $\Delta_{1'22'}=\Delta_{11'2'}=D_{12}=D_{12'}=D_{1'2}=0$. The conditions \eqref{cond_thm1}
for the convergence of $\vartheta_\mu[\Phi_2]$ now become
\begin{subequations}
\label{condII}
\bea
\label{cond1lI}&Q(C_1),\,  Q(C_2)>0,\quad  Q(C'_1)=Q(C'_2)=0,
\\
\label{cond34lI}& B(C_1,C'_1)>0 , \quad  B(C_2,C'_2)>0 , 
\\
\label{cond5II}& {\operatorname M}_{00} \mbox{ is negative definite},
\eea
\end{subequations}
where ${\operatorname M}_{00}=\begin{pmatrix}
  \Delta_{122'}  & D_{1'2'}\\
 D_{1'2'}  & \Delta_{11'2} \end{pmatrix}$.
It follows from the positivity of $\det({\operatorname M}_{00})$ that $\Delta_{12}>0$.

One can again decompose $k\in \Lambda+\mu+\tfrac{\bfp}{2}$  into $k=k_0 + m_1 C'_1+m_2C'_2$
with $m_1,m_2\in \IZ$ and $k_0\in \Lambda_{12}$ where
\be
\Lambda_{12}=\left\{k_0\in \Lambda+\mu+\tfrac{\bfp}{2} \ :\
\tfrac{B(C_1,k_0+b)}{B(C_1,C'_1)}\in [0,1)\ {\rm and}\ \tfrac{B(C_2,k_0+b)}{B(C_2,C'_2)}\in [0,1)\right\}.
\label{latticeLCC}
\ee
The sum over $m_1$ and $m_2$ is convergent provided $B(C'_1,k_0+b)\neq 0$ and $B(C'_2,k_0+b)\neq 0$ for all $k_0\in \Lambda_{12}$.
Summing up the  double geometric series, one obtains
\be
\begin{split}
\label{thetaresummed2}
\vartheta_\mu[\Phi_2] &=  \sum_{\bfk_0\in \Lambda_{12}}
(-1)^{B(\bfk_0,\bfp)}\, q^{-\frac12 Q(\bfk_0+\bfb)}\,\expe{B(\bfc,\bfk_0+\haf\bfb)}
\\
&\times \left(\tfrac{1}{1-\expe{ B(C'_1, \bfc-\tau(\bfk_0+b)+\frac{p}{2})}}-\frac12\, \delta_{B(C_1,k_0+b),0}\right)
\left(\tfrac{1}{1-\expe{ B(C'_2, \bfc-\tau(\bfk_0+b)+\frac{p}{2})}\,}-\frac12\, \delta_{B(C_2,k_0+b),0}\right).
\end{split}
\ee
As in the discussion below \eqref{indefresummed}, the second line is bounded for all $k_0$ in the sum.
Furthermore, since $\Delta_{12}>0$, $Q(k)$ is negative definite on the orthogonal complement
$\left<C_1,C_2\right>^\perp=\left\{ x\in \IR^{2,n-2}\ :\  B(C_1,x)=B(C_2,x)=0\right\}$,
one concludes that $\vartheta[\Phi_2]$ is convergent.
Setting $z=b\tau-c$, the right-hand side of \eqref{thetaresummed2}
can be analytically continued  to a meromorphic function of $z\in \IC^n$
(in the sense of Remark \ref{rk_hol}), with simple poles on the divisors
$\cP_{C'_1,\mu}$ and $\cP_{C'_2,\mu}$.

The additional terms in the modular completion $\vartheta_\mu[\widehat\Phi_2]$ are given by
\be
\label{prod2signaddll}
\begin{split}
\widehat\Phi_2-\Phi_2 = &\, \frac14\,  \Bigl\{  M_2(C_1,C_2;x)
+ \Bigl[ \sign B(C_{2\perp1},x) - \sign B(C_{2'},x)  \Bigr]
M_1(C_1;x)
\Bigr.
\\
&\,\Bigl.
+ \Bigl[  \sign B(C_{1\perp2},x) - \sign B(C_{1'},x) \Bigr]
M_1(C_2;x)
\Bigr\}.
\end{split}
\ee
Convergence follows by the same arguments as before.
The shadow of $\vartheta_\mu[\widehat\Phi_2]$ is now the theta series
$\vartheta_{\bfmu}[\Psi_2]$ with
\be
\label{prod2signshadowll}
\begin{split}
\Psi_2(x) =&\, \frac{\I}{8}\, \Biggl\{ \tfrac{B(C_1,x)}{\sqrt{Q(C_1)}}\,e^{-\frac{\pi B(C_1,x)^2}{Q(C_1)}}
\Bigl[ E_1(C_{2\perp1};x) -  \sign B(C'_2,x) \Biggr]
\Biggr.
\\
&\,\Biggl.
+ \tfrac{B(C_2,x)}{Q(C_2)}\, e^{-\frac{\pi B(C_2,x)^2}{Q(C_2)}}
\Bigl[ E_1(C_{1\perp2};x)- \sign B(C_1',x) \Bigr]\Biggr\}.
\end{split}
\ee

In addition to $C'_1$ and $C'_2$, one may also let $C_1$ and $C_2$ degenerate to null
vectors in $\Lambda$, analogously to Remark \ref{remarknullvectors}
for Lorentzian lattices. In such cases, $\Psi_2$ vanishes and $\vartheta_\mu[\Phi_2]$ can
be analytically continued to a meromorphic Jacobi form.

\subsection{Signature $(2,1)$ and special case $C_1=C_2$ \label{sec_sig21}}

In the previous construction it was important that $\Delta_{11'22'}>0$. 
Unfortunately, this condition rules out the case $n=3$, where $\Delta_{11'22'}$ necessarily vanishes. 
It is therefore natural to ask if convergent conformal theta series can be constructed 
using only three vectors.\footnote{We thank Don Zagier for suggesting this possibility, Jeffrey Harvey for drawing
our attention to \cite{Zwegers:2009}, and Kathrin Bringmann and Larry Rolen for a discussion
about the theta series in \cite{BRZ2015}.}
Indeed, two of Ramanujan's fifth order mock theta functions, $\chi_0(q)$ and $\chi_1(q)$, are examples  
of signature $(2,1)$ theta series of the form \eqref{prod2sign}
upon multiplication by the Dedekind eta function \cite{Zwegers:2009}. The corresponding quadratic form $Q$ and the vectors $\{C_i,C'_i\}$ are given by
\be  
\label{Ramachi1}
Q=- \begin{pmatrix} 1 & 2 & 2 \\ 2 & 1 & 2 \\ 2 & 2 & 1 \end{pmatrix},\quad
C_1=C_2=\begin{pmatrix} 3 \\ -2 \\ -2 \end{pmatrix},\quad
C'_1=\begin{pmatrix} 2 \\ -3 \\ 2 \end{pmatrix},\quad
C'_2=\begin{pmatrix} 2 \\2 \\ -3 \end{pmatrix}.
\ee
Another example arises in the context of quantum invariants of torus knots:
after multiplication by $\eta^3/\theta_1$, the colored Jones polynomial for
the torus knot $T_{(2,2t+1)}$  is given
by a  signature $(2,1)$ theta series  of the form \eqref{prod2sign}
with \cite[Thm. 1.3]{zbMATH06587777}
\be
\label{HikamiUC}
Q=- \begin{pmatrix} \tfrac14 & \tfrac{4t+3}{4}  & \tfrac12 \\
\tfrac{4t+3}{4} & \tfrac14 & \tfrac12 \\
\tfrac12  & \tfrac12 & 0 \end{pmatrix}\!,\ 
C_1=C_2=\begin{pmatrix} 1 \\ -1 \\ -2t-1 \end{pmatrix}\!,\ 
C'_1=\begin{pmatrix} 1 \\ -1 \\ 2t+1 \end{pmatrix}\!,\ 
C'_2=\begin{pmatrix} 1 \\ 1 \\ -2(t+1) \end{pmatrix}\!.
\ee
Yet another example is the theta series $F(z_1,z_2,z_3;\tau)$ considered in \cite{BRZ2015} in
the context of open Gromov-Witten invariants, which is a signature $(2,1)$ theta series  
of the form \eqref{prod2sign} with
\be
\label{BRZFC}
Q=- \begin{pmatrix} 1 & 1 & 1 \\ 1 & 0 & 1 \\ 1 & 1 & 0 \end{pmatrix},\quad
C_1=C_2=\begin{pmatrix} 1 \\ -1 \\ -1 \end{pmatrix},\quad
C'_1=\begin{pmatrix} -1 \\ 1 \\ 0 \end{pmatrix},\quad
C'_2=\begin{pmatrix} -1 \\0 \\ 1 \end{pmatrix}.
\ee

Motivated by these examples, let us investigate the convergence of
conformal theta series  $\vartheta_{\bfmu}[\Phi_2,0]$ with $\Phi_2$ given by \eqref{prod2sign}
with $C_1=C_2$. For this purpose we write the Gram determinant $\Delta(x,C_1,C'_1,C'_2)$ as
\be
\begin{split}
\Delta(x,C_1,C'_1,C'_2)=&
Q_-(x) \Delta_{11'2'}
-  \Delta_{1'2'} \, [B(C_1,x)]^2 -  \Delta_{12'} [B(C'_1,x)]^2 - \Delta_{11'}\,  [B(C'_2,x)]^2,
\end{split}
   \ee
where
\be
Q_-(x)=Q(x)  + 2 \frac{E_{11'} \, B(C_1,x)\, B(C'_1,x) + E_{12'}\, B(C_1,x) \,B(C'_2,x)
+E_{1'2'}  \, B(C'_1,x)\, (C'_2,x)}{ \Delta_{11'2}}
\ee
and
\be
\begin{split}
E_{11'} =&
Q(C'_2) \, B (C_1,C'_1) - B (C'_1,C'_2)\, B(C_1,C'_2)
= Q(C'_2) \,  B(C_{1\perp2'}, C_{1'\perp2'}),
\\
E_{12'} =&
 Q(C'_1)\, B (C_1,C'_2)  - B (C'_1,C'_2)  \,  B (C_1,C'_1)
= Q(C'_1) \, B(C_{1\perp1'},C_{2'\perp1'}),
\\
E_{1'2'} =&
Q(C_1)\, B (C'_1,C'_2)  -  B (C_1,C'_1) \, B (C_1,C'_2)
=  Q(C_1)\, B(C_{2'\perp1}, C_{1'\perp1})\ .
\end{split}
\ee
We assume  that  $\langle C_1,C'_1,C'_2 \rangle$ span a signature $(2,1)$ plane,
so that $ \Delta_{11'2'} <0$, and moreover that
\be
\label{condDel0}
\Delta_{1'2'}, \Delta_{12'}, \Delta_{11'}> 0\ ,\quad
E_{11'}, E_{12'} \geq 0, \quad E_{1'2'}\leq 0 .
\ee
A similar argument as below \eqref{Gram50mat} shows that the quadratic form $Q_-(x)$ is
negative definite. Moreover, the locally constant
function \eqref{prod2sign} vanishes unless  $B(C_1,x) B(C'_1,x) \leq 0$ and
$B(C_1,x) B(C'_2,x) \leq 0$. These two conditions imply
$B(C'_1,x) B(C'_2,x) \geq 0$, unless $B(C_1,x)=0$. Assuming that $b$ is chosen such that
$B(C_1,k+b)$ never vanishes for $k\in \Lambda+\mu+\tfrac12 p$, one has $Q(x)\leq Q_-(x)$
for all terms contributing to the sum,
which ensures that  $\vartheta_{\bfmu}[\Phi_2,0]$ is convergent.
The modular completion is obtained by adding the same terms as
\eqref{prod2signadd}, except that the terms $M_2(C_1,C_2;x)$, $\sign B(C_{2\perp1},x)$,
$\sign B(C_{1\perp2},x)$ all vanish (see the discussion around \eqref{E2Delta0}).
The conditions  $E_{11'},E_{12'} \geq 0,\ E_{1'2'}\leq 0$ guarantee that the additional
terms in the modular completion are themselves convergent.

The conditions \eqref{condDel0} are indeed satisfied for the choice \eqref{Ramachi1}
of quadratic form $Q$ and vectors $C_1,C'_1,C'_2$  relevant for the fifth order mock theta functions, where
\be
\Delta_{11'2'}=-5^5, 
\quad 
\Delta_{1'2'}=\Delta_{11'}=\Delta_{12'}=5^3, 
\quad
E_{11'}=E_{12'}=-E_{1'2'}=250.
\ee
In contrast, the quadratic form and vectors \eqref{HikamiUC} relevant
for the quantum invariants of torus knots satisfy weaker conditions,
\be
\begin{split}
\Delta_{11'2'}=-4(2t+1)^5,
\quad
\Delta_{11'}=0,
\quad  
\Delta_{12'}=\Delta_{1'2'}=(2t+1)^3,
\\
E_{11'}=(2t+1)^3(4t+3), 
\quad 
E_{12'}=-E_{1'2'}=2(2t+1)^3,\quad
\end{split}
\ee
so $Q_-(x)$ has one null direction, which can   be treated similarly as in
\S\ref{sec_null}. Similarly, the quadratic form and vectors \eqref{BRZFC} relevant for  
the signature (2,1) theta series $F(z_1,z_2,z_3;\tau)$ satisfy 
\be
\Delta_{11'2'}=-1, 
\quad
\Delta_{11'}=\Delta_{12'}=0,
\quad
\Delta_{1'2'}=1,
\quad
 E_{11'}=E_{12'}=- E_{1'2'}=1,
\ee
so that $Q_-(x)$ has two null directions.  Since $B(C'_1,C'_2)=0$ and
$\Delta_{11'}=\Delta_{12'}=0$, the modular completion of $F(z_1,z_2,z_3;\tau)$
involves only $M_1$ functions,
in agreement with the fact that it can be written in terms of the
standard Appell-Lerch sum \cite{BRZ2015}.

More generally, one could consider theta series of the form \eqref{prod2sign}
where $C_2$ is a linear combination of $C_1$ and $C'_1$, although we expect
that this can always be reduced to the case $C_2=C_1$.

\section{Application: Generalized Appell-Lerch sums \label{sec_appell}}
\label{genappell}

In this section, we apply the techniques introduced in Sections \ref{derror}
and \ref{2n-2} to find the modular completion of the generalized Appell-Lerch sum $\mu_{2,2}(u,v)$, which will be
defined in Definition \ref{Defmus} and which is based on the ${\operatorname{A}_2}$ root lattice.
The subscript denotes here the signature
of the indefinite lattice obtained after expanding the denominators as a geometric series.
A specialization of this function appears in the generating function of invariants of moduli spaces of
rank 3 vector bundles on $\mathbb{P}^2$ as explained in some more detail in the Introduction.
We start with a few preliminary facts
on theta series associated to the ${\operatorname{A}_2}$ lattice, which will be useful for the following.

\subsection{Preliminaries}

Let $\tau\in \mathbb{H}$ and $v\in \mathbb{C}$. Let $\mathcal{Q}_r$ be
the \bp{opposite of the} quadratic form of the ${\operatorname{A}_r}$ root lattice.  For
$r=2$, we choose the basis of the lattice such that
$\bp{-}\cQ_2(k)=2k_1^2+2k_2^2+2k_1k_2$. The corresponding theta functions are known as ``cubic theta functions''
(see for example \cite{borwein1994}), and defined by
\be
\Theta_{{\cQ_2},j}(v,\tau):=\sum_{k_1,k_2\in \mathbb{Z}+\frac{j}{3}} e^{2\pi\I v(k_1+2k_2)}
q^{\bp{-}\frac{1}{2}\cQ_2(k)}.
\ee
They satisfy the quasi-periodicity property\footnote{We will often omit $\tau$ from the arguments
of a function in the following. For example, $\Theta_{{\cQ_2},j}(v,\tau)$ will be
referred to as $\Theta_{{\cQ_2},j}(v)$.}
\be
\Theta_{{\cQ_2},j}(v+\lambda \tau+\mu)=q^{-\lambda^2}e^{-4\pi\I \lambda v} \Theta_{{\cQ_2},j}(v).
\ee
Under the generators $S$ and $T$ of $\operatorname{SL}(2,\IZ)$, they transform as a vector valued modular form
\be
\begin{split}
&S:\qquad \Theta_{{\cQ_2},j}\!\left(\frac{v}{\tau},-\frac{1}{\tau}\right)=
-\I \tau e^{2\pi\I v^2/\tau}\sum_{\ell \!\!\!\mod 3} e^{-2\pi \I j\ell/3} \,\Theta_{{\cQ_2},\ell} (v,\tau),
\\
&T:\qquad \Theta_{{\cQ_2},j}(v,\tau+1)=e^{2\pi\I j^2/3}\,\Theta_{{\cQ_2},j}(v,\tau).
\end{split}
\ee

Since $\Theta_{{\cQ_2},0}(v)$ will occur repeatedly in the following, we denote it for brevity by $\Theta(v)$. It
is a Jacobi form of weight 1 and index 1 for the congruence subgroup $\Gamma_0(3)$
with multiplier $\left(\frac{d}{3}\right)$ (the Legendre symbol modulo 3) \cite{borwein1994}.
The zeroes of $\Theta(v)$ are at $v_{\pm}=\pm \left(\frac{1}{2}\tau+\frac{1}{4}+\nu(\tau) \right)$
 modulo $\tau\mathbb{Z}+\mathbb{Z}$, where
$\nu(\tau)=-2\pi\int_{\tau}^{\I\infty}(t-\tau)\,F(t)\,\de t$, with $F(\tau)$ the modular form \cite{Manschot:2014cca}
\be
F(\tau):=\frac{\eta(\tau)^{12}\,\eta(3\tau)^6}{\left(q^{\frac{1}{2}}
\prod_{\lambda,\mu=0,1}\Theta(\tfrac{\lambda\tau+\mu}{4},\tau)\right)^{3/2}}\, .
\ee

\subsection{Generalized Appell-Lerch sums}

We define the following generalized Appell-Lerch sums of signature $(2,1)$ and $(2,2)$.

\begin{definition}
\label{Defmus}
Let $u\in \mathbb{C}\backslash (\tau\mathbb{Z}+\mathbb{Z})$ and $v\in
\mathbb{C}\backslash (v_{\pm}+\tau\mathbb{Z}+\mathbb{Z})$.
Then we define $\mu_{2,1}(u,v)$ and $\mu_{2,2}(u,v)$ by
\be
\begin{split}
\label{defmus}
\mu_{2,1}(u,v,\tau)&:=\frac{1}{2}+\frac{e^{2\pi \I u}}{\Theta(v)}\sum_{k_1,k_2\in \mathbb{Z}}
\frac{e^{2\pi \I v (k_1+2k_2)}q^{k_1^2+k_2^2+k_1k_2+2k_1+k_2}}{1-e^{2\pi \I u}q^{2k_1+k_2}}\, ,
\\
\mu_{2,2}(u,v,\tau)&:=\frac{1}{4}+\frac{e^{2\pi \I u}}{\Theta(v)}\sum_{k_1,k_2\in \mathbb{Z}}
\frac{e^{2\pi\I v(k_1+2k_2)}q^{k_1^2+k_2^2+k_1k_2+2k_1+k_2}}{(1-e^{2\pi\I u}q^{2k_1+k_2})(1-e^{2\pi\I u}q^{k_2-k_1})}\, .
\end{split}
\ee
\end{definition}
For simplicity, we included only two elliptic variables $u$ and $v$ in the definition
of $\mu_{2,1}(u,v)$ and $\mu_{2,2}(u,v)$. It is straightforward to
refine the functions with one or two more elliptic
variables. The functions are specializations of $\mu_{\cQ_2, \{m_j\}}({\boldsymbol u},{\boldsymbol v})$
(\ref{genAppell}) up to a constant term. For example, for $\mu_{2,2}(u,v)$,
one sets $m_1=(1,0)$, $m_2=(-1,1)$, ${\boldsymbol u}=(u,u)$ and ${\boldsymbol v}=(\tau,v)$.

The modular completion of $\mu_{2,1}(u,v)$ is readily determined
after expressing $\mu_{2,1}(u,v)$ in terms of the
classical Appell-Lerch sum $\mu(u,v)$ (\ref{ALsum}) and Jacobi theta
functions. To state the modular completion $\widehat \mu_{2,2}(u,v)$
of $\mu_{2,2}(u,v)$ and prove its modular properties in Theorem
\ref{thcompmu22}, we make the following

\begin{definition}
\label{DefRs}
Let $u$ and $v$ be in the same domain as in Definition \ref{Defmus}
with $a=\Im(u)/\tau_2$. Then we define $R_{2,2}(u,v)$ by
\begin{eqnarray}
R_{2,2}(u,v,\tau)&:=&R_2(u-v)
\non \\
&& +4 \mu_{2,1}(u,v)\,R_{1,0}(u-v)
+4\,e^{\pi \I (v-u)}q^{-\frac{1}{4}}\left( \mu_{2,1}(u,v-\tau)-\textstyle{\frac{1}{2}}\right)\, R_{1,1}(u-v),
\non
\end{eqnarray}
where $\mu_{2,1}(u,v)$ is as in Definition \ref{Defmus}. Moreover, $R_{1,j}(u)$ with $j=0,1$, is defined by
\begin{eqnarray}
\label{compR}
R_{1,j}(u,\tau)&:=&
\sum_{\ell\in \mathbb{Z}+\frac{j}{2}}
\Bigl[\sign(\ell)-E_1\left(\sqrt{\tau_2}\left(2\ell-a\right)\right)\Bigr]
e^{3\pi \I \ell u}q^{-3\ell^2},
\end{eqnarray}
with $E_1(u)$ as in Theorem \ref{modularLorentz}, and $R_{2}(u)$ is defined by
\begin{eqnarray}
\label{compR2}
R_{2}(u,\tau)&:=&
\sum_{k_3,k_4\in \mathbb{Z}} \biggl\{ \sign(k_3)\sign(k_4)
-2 \Bigl[ \sign(k_4) -E_1(\sqrt{3\tau_2}(k_4-a))\Bigr]\sign(2k_3-k_4) \biggr.
\non
\\
&&\biggl. -E_2\!\left(\tfrac{1}{\sqrt{3}}\,;\sqrt{\tau_2} (2k_3-k_4-a), \sqrt{3\tau_2} (k_4-a)\right)
\biggr\}\,  e^{2\pi\I(k_3+k_4)u}q^{-k_3^2-k_4^2+k_3k_4}
\end{eqnarray}
with $E_2(\alpha,u_1,u_2)$ as in Definition \ref{DefE2}.
\end{definition}

\begin{theorem}
\label{thcompmu22}
With $\mu_{2,2}(u,v)$ as in (\ref{defmus}), we define $\widehat \mu_{2,2}(u,v)$ by
\be
\widehat \mu_{2,2}(u,v,\tau):=\mu_{2,2}(u,v,\tau)-\textstyle{\frac{1}{4}}\, R_{2,2}(u,v,\tau).
\ee
Then $\widehat \mu_{2,2}(u,v)$ transforms as a two-variable Jacobi form of weight 1 under
the modular group $\Gamma_0(3)$ with multiplier $\left(\frac{d}{3}\right)$, and matrix-valued index
$m={\scriptsize \left(\begin{array}{cc}\! -1\! & 1\\ 1 & \!-1\! \end{array}\right)}$.
\end{theorem}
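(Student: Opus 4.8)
The plan is to realize $\Theta(v)\bigl(\mu_{2,2}(u,v)-\tfrac14\bigr)$ as a holomorphic conformal theta series of signature $(2,2)$ in the double-null limit of \S\ref{doublenull}, and then to identify its modular completion with $-\tfrac14\,\Theta(v)\,R_{2,2}(u,v)$. First I would fix the rank-$4$ lattice: take the $\mathrm{A}_2$ root lattice with form $\cQ_2$ for the two ``magnetic'' directions, and adjoin the two directions produced by expanding the denominators in \eqref{defmus} into geometric series, so that the ambient quadratic form $Q$ has signature $(2,2)$. The two factors $(1-e^{2\pi\I u}q^{2k_1+k_2})$ and $(1-e^{2\pi\I u}q^{k_2-k_1})$ single out two primitive null vectors $C'_1,C'_2$, while the cubic-theta structure fixes two time-like vectors $C_1,C_2$; the elliptic variables $\bfb,\bfc$ of \eqref{Vignerasth} are then chosen so that the pairing $B(\bfc,\,\cdot\,)$ and the $q$-power reproduce the exponents $e^{2\pi\I v(k_1+2k_2)}$ and $q^{\,k_1^2+k_2^2+k_1k_2+2k_1+k_2}$. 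I would verify that these four vectors satisfy the convergence conditions \eqref{condII} (so in particular $\Delta_{12}>0$), after which the double geometric resummation \eqref{thetaresummed2} holds and reproduces, term by term, the double sum defining $\mu_{2,2}$, the constant $\tfrac14$ matching the value of $\Phi_2$ at infinity.

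Granting this identification, the modular behaviour is essentially automatic. By the double-null construction of \S\ref{doublenull}, the smooth kernel $\widehat\Phi_2$ of \eqref{prod2signE} satisfies Vign\'eras' equation with $\lambda=0$, so $\vartheta_{\bfmu}[\widehat\Phi_2,0]$ is a non-holomorphic vector-valued Jacobi form of weight $(\tfrac n2,0)=(2,0)$, with shadow the Lorentzian series built from $\Psi_2$ in \eqref{prod2signshadowll}. Since $\Theta(v)=\Theta_{\cQ_2,0}(v)$ is a Jacobi form of weight $1$ and index $1$ for $\Gamma_0(3)$ with multiplier $\bigl(\tfrac d3\bigr)$, the quotient $\vartheta_{\bfmu}[\widehat\Phi_2,0]/\Theta(v)$ has weight $2-1=1$ and inherits the multiplier $\bigl(\tfrac d3\bigr)$; the vector-valued $\operatorname{SL}(2,\IZ)$ modularity with the Weil representation in \eqref{eq:thetatransforms} descends, upon fixing the glue vector $\bfmu$ and repackaging via the cubic-theta decomposition, to scalar modularity for the congruence subgroup, exactly as for $\Theta$ itself. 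The matrix index is read off from the elliptic transformation \eqref{Vigell} of the numerator in the variables dual to $u,v$ (converted to Jacobi form as in Remark~\ref{rk_hol}) together with the index subtracted by $1/\Theta(v)$ in the $v$-slot; combining the two yields precisely $m=\left(\begin{smallmatrix}-1&1\\1&-1\end{smallmatrix}\right)$.

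The substantive computation is to show that $\widehat\mu_{2,2}-\mu_{2,2}=-\tfrac14 R_{2,2}$, i.e.\ that the completion terms of \eqref{prod2signaddll} reorganize into $R_{2,2}$ of Definition~\ref{DefRs}. The difference $\widehat\Phi_2-\Phi_2$ splits into one $M_2(C_1,C_2;x)$ piece and two $M_1$ pieces. Inserted into the theta series, the $M_2$ piece is genuinely two-dimensional in the null directions and, rewritten over the $\mathrm{A}_2$ summation variables, produces the double sum $R_2(u-v)$ of \eqref{compR2}, the $E_2$ there encoding the shadow data of $M_2$. In each $M_1$ piece only one of the two null directions survives as a convergent geometric series: resumming the direction that is \emph{not} completed reconstitutes the holomorphic signature-$(2,1)$ Appell-Lerch sum, namely $\mu_{2,1}(u,v)$ respectively $\mu_{2,1}(u,v-\tau)-\tfrac12$, while the $M_1$ factor in the completed direction supplies the one-dimensional shadow $R_{1,0}(u-v)$ respectively $R_{1,1}(u-v)$ of \eqref{compR}, the residue class $j\in\{0,1\}$ distinguishing the two null directions and the $q^{-1/4}$, $e^{\pi\I v}$ twists tracking the half-integral shift. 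Matching the overall $\tfrac14$ prefactors then gives exactly the three lines of $R_{2,2}$.

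The main obstacle is precisely this last reorganization: the completion of the signature-$(2,2)$ object is not a single indefinite theta series but contains, through the $M_1$ terms, the \emph{holomorphic} lower-rank sum $\mu_{2,1}$ as a coefficient, so one must exhibit this nested structure explicitly. This requires performing the two geometric resummations on \eqref{prod2signaddll} in the correct order, carefully retaining the Kronecker-delta boundary contributions already visible in \eqref{thetaresummed2}, and justifying the analytic continuation of $u,v$ past the polar divisors $\cP_{C'_1,\bfmu}$ and $\cP_{C'_2,\bfmu}$. Convergence of every intermediate sum must be controlled using the bounds in \eqref{condII} together with the uniform estimate \eqref{M2Clargex} on $M_2$ and the exponential decay of $M_1$. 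Once the algebraic identity $\widehat\mu_{2,2}=\vartheta_{\bfmu}[\widehat\Phi_2,0]/\Theta(v)+\text{const}$ is established, the transformation law asserted in the theorem follows from \eqref{eq:thetatransforms} and the known transformations of $\Theta(v)$ under $\Gamma_0(3)$.
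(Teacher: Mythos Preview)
Your high-level strategy coincides with the paper's: write $\Theta(v)\bigl(\mu_{2,2}-\tfrac14\bigr)$ as a double-null signature-$(2,2)$ theta series $\vartheta_\mu[\Phi_2]$, complete it to $\vartheta_\mu[\widehat\Phi_2]$ via \S\ref{doublenull}, and then extract the weight, multiplier and index from the block structure of the lattice together with the known transformation of $\Theta(v)$. That part is fine and matches the paper closely.

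Where your proposal has a genuine gap is the reorganization of the completion into $R_{2,2}$. You assert a clean trichotomy: the $M_2$ piece of \eqref{prod2signaddll} produces $R_2(u-v)$, and each $M_1$ piece produces a product $\mu_{2,1}\cdot R_{1,j}$ after resumming one geometric direction. That is not how the terms actually fall. First, the geometric expansion of the denominators yields sign arguments $k_3-1+\epsilon$, $k_4+\epsilon$, \emph{not} the $k_3+b-a$, $k_4+b-a$ that sit inside $\Phi_2$; the mismatch is a nontrivial correction $s(k,z,\epsilon)$ which, after a lattice symmetry $k_1\mapsto-k_1$, $k_2\mapsto k_2+k_1$, $k_3\leftrightarrow k_4$, collapses to $-\Theta(v)$. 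You do not account for this piece, and without it the constant in $R_{2,2}$ will not come out right. Second, and more seriously, the $\mu_{2,1}$-factorization does \emph{not} come from a plain geometric resummation of an $M_1$ term. One must insert and remove $\sign(2k_3-k_4)$ by hand: the combination $\sign(2k_1+k_2+a)+\sign(2k_3-k_4)$, after a half-integer shift in $k_1$ and $k_2$, is what resums to $\Theta(v)\,\mu_{2,1}$ (with the parity of $k_4$ distinguishing $R_{1,0}$ from $R_{1,1}$), while the subtracted $\sign(2k_3-k_4)$ piece does \emph{not} vanish---it is carried over and, together with the $\sign(k_3)\sign(k_4)-E_2$ contribution, assembles into $R_2(u-v)$ only after a further change of variables $k_1\mapsto k_1-k_3+k_4$, $k_2\mapsto k_2-k_4$ that peels off $\Theta(v)$. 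So $R_2$ is not the image of the $M_2$ block alone; it is a hybrid built from the $E_2$-replacement \emph{and} the residue of the add-subtract trick on the $E_1$ line. Your proposed bookkeeping would not close.

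In short: the architecture is right, but the ``substantive computation'' you flag is harder than described and requires two ingredients you omit---the $\epsilon$ versus $b{-}a$ discrepancy contributing $-\Theta(v)$, and the $\pm\sign(2k_3-k_4)$ redistribution that entangles the $\mu_{2,1}\cdot R_{1,j}$ terms with $R_2$.
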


\begin{proof}
The theorem is proven by first expressing $\mu_{2,2}(u,v)$ in terms of
an indefinite theta function for signature $(2,2)$ with both $C'_1$ and $C'_2$
null as in Section \ref{doublenull}, and then using the results of Section \ref{conf_theta}
to determine its modular completion. To this end, we start by defining
the generalized Appell function\footnote{We use the terminology
``generalized Appell function'' for functions which can be written as a generalized Appell-Lerch function
(\ref{genAppell}) multiplied by $\Theta_{\cQ}(v)$.}
\begin{eqnarray}
A_{2,2}(u,v,\tau)& :=&e^{2\pi \I u}\sum_{k_1,k_2\in \mathbb{Z}}
\frac{e^{2\pi \I v (k_1+2k_2)}q^{k_1^2+k_2^2+k_1k_2+2k_1+k_2}}{(1-e^{2\pi \I u}q^{2k_1+k_2}) (1-e^{2\pi \I u}q^{k_2-k_1})}
\\
&=&\Theta(v)\,\left(\mu_{2,2}(u,v)-\textstyle{\frac{1}{4}}\right),
\non
\end{eqnarray}
and expand the denominator as a double geometric series,
\begin{eqnarray}
\label{A22exp}
 A_{2,2}(u,v)&=&{\frac{1}{4}}\sum_{k\in \mathbb{Z}^4}
 \left[\sign(k_3+\epsilon)+\sign(2k_1+k_2+a)\right]
 \left[\sign(k_4+\epsilon)+\sign(k_2-k_1+a) \right]
 \non \\
 && \times e^{2\pi\I (v (k_1+2k_2)+u (k_3+k_4+1))}q^{k_1^2+k_2^2+k_1k_2+(2k_1+k_2)(k_3+1)+(k_2-k_1)k_4},
\end{eqnarray}
where $k=(k_1,k_2,k_3,k_4)\in \mathbb{Z}^4$, $0<\epsilon<1$ and $a=\Im(u)/\tau_2$ as before. In the notation of Section \ref{sec_vz}, this corresponds to an indefinite theta series of the form
\eqref{Vignerasth} for the  signature $(2,2)$  bilinear form $B(x,y)$ with matrix representation
\be
{\rm B}=-\left(\begin{array}{cccc} 2 & 1 & 2 & -1 \\ 1 & 2 & 1 & 1\\ 2 & 1 & 0 & 0 \\ -1 & 1 & 0 & 0 \end{array} \right)\ ,
\ee
 Indeed,
shifting $k_3\mapsto k_3-1$  in (\ref{A22exp}), the second line is recognized
 as $e^{-2\pi\I B(k,z)}q^{-\tfrac12Q(k)}$
with $Q(k)=B(k,k)$ and $z=(0,u,v-u,v-u)$. Denoting  $b=\Im(v)/\tau_2$,  $A_{2,2}(u,v)$ can
be rewritten as
\begin{eqnarray}
\label{indefA22}
A_{2,2}(u,v)&=&{\frac{1}{4}}\sum_{k\in\mathbb{Z}^4}
\Bigl[\sign\left(k_3+b-a\right)+\sign\left(2k_1+k_2+a\right) \Bigr]
\non \\
&&\times \Bigl[\sign\left(k_4+b-a\right)+\sign\left(k_2-k_1+a\right)\Bigr]
\,e^{-2\pi\I B(k,z)}q^{-Q(k)/2}
\\
&&+{\frac{1}{4}}\sum_{k\in\mathbb{Z}^4} s(k,z,\epsilon)\, e^{-2\pi\I B(k,z)}
q^{-\tfrac12Q(k)}
\non
\end{eqnarray}
with $s(k,z,\epsilon)$ equal to the following combination of signs:
\begin{eqnarray}
s(k,z,\epsilon)&=&\Bigl[\sign(k_3-1+\epsilon)-\sign(k_3+b-a)\Bigr] \sign(k_2-k_1+a)
\non \\
&&+\Bigl[\sign(k_4+\epsilon)-\sign(k_4+b-a)\Bigr] \sign(2k_1+k_2+a)
\\
&&+\sign(k_3-1+\epsilon) \sign(k_4+\epsilon)
-\sign(k_3+b-a)\sign(k_4+b-a).
\non
\end{eqnarray}
The first two lines of (\ref{indefA22}) have precisely the form of an indefinite theta function
$\vartheta_{\bfmu}[\Phi_2]$ studied in Section \ref{2n-2}, with vanishing characteristic and glue
vectors $p=\mu=0$ and with
\be
C_1=\frac{1}{3}\left( \begin{array}{c} -1 \\ -1 \\ 2 \\ 1\end{array} \right),
\quad
C'_1=\left( \begin{array}{c} 0 \\ 0 \\ 1 \\ 0\end{array} \right),
\quad
C_2=\frac{1}{3}\left( \begin{array}{c} 1 \\- 2 \\ 1 \\ 2\end{array} \right),
\quad
C'_2=\left( \begin{array}{c} 0 \\ 0 \\ 0 \\ 1\end{array} \right).
\ee
These vectors have norms $Q(C_1)=Q(C_2)=\frac{2}{3}$, $Q(C'_1)=Q(C'_2)=0$,
and satisfy the properties listed in \eqref{condII}.
Following the discussion of the previous section, the completion of $A_{2,2}(u,v)$ is
obtained by appropriately replacing the products of sign functions on the first two lines
of (\ref{indefA22}) and subtracting the third line. The replacements are as follows
\be
\begin{split}
\sign(k_3+b-a) \,\sign(k_4+b-a) &\ \to\
E_2\!\left(\tfrac{1}{\sqrt{3}}\,;\sqrt{\tau_2} (2k_3-k_4+b-a), \sqrt{3\tau_2} (k_4+b-a)\right),
\\
\sign(k_3+b-a)\, \sign(k_2-k_1+a) &\ \to\
\sign(k_2-k_1+a)\, E_1\!\left( \sqrt{3\tau_2} (k_3+b-a)\right),
\\
\sign(2k_1+k_2+a) \,\sign(k_4+b-a) &\ \to\
 \sign(2k_1+k_2+a)\, E_1\!\left( \sqrt{3\tau_2} (k_4+b-a)\right).
\end{split}
\ee
Using the above prescription, we define the completion of $A_{2,2}(u,v,\tau)$ as
\be
\widehat A_{2,2}(u,v,\tau)=A_{2,2}(u,v)-\textstyle{\frac{1}{4}} \tilde R(u,v,\tau)
\ee
with
\begin{eqnarray}
\label{tR}
\tilde R(u,v,\tau)&=&\sum_{k\in \mathbb{Z}^4}
\biggl\{ \Bigl[\sign(k_3-1+\epsilon) -E_1(\sqrt{3\tau_2}(k_3+b-a)) \Bigr] \sign(k_2-k_1+a) \biggr.
\non\\
&&+\Bigl[\sign(k_4+\epsilon) -E_1(\sqrt{3\tau_2}(k_4+b-a))\Bigr] \sign(2k_1+k_2+a)
\\
&&
+\sign(k_3-1+\epsilon)\sign(k_4+\epsilon)
\non \\
&&\biggl.
-E_2\!\left(\tfrac{1}{\sqrt{3}}\,;\sqrt{\tau_2} (2k_3-k_4+b-a), \sqrt{3\tau_2} (k_4+b-a)\right)\biggr\}
\, e^{-2\pi\I B(k,z)}q^{-\tfrac12Q(k)}.
\non
\end{eqnarray}

The theorem will follow if we prove that $\tilde
R(u,v)=\Theta(v)\,(R_{2,2}(u,v)-1)$ with $R_{2,2}(u,v)$ as in
Definition \ref{DefRs}. To this end, note that the transformation
\be
\label{A22sym}
k_1\mapsto -k_1, \quad k_2\mapsto k_2+k_1, \quad k_3\mapsto k_4, \quad k_4\mapsto k_3,
\ee
is a symmetry of $B(k,z)$ and $Q(k)$. Using this symmetry, we rewrite (\ref{tR}) as
\begin{eqnarray}
\label{tR2}
&&\sum_{k\in \mathbb{Z}^4}\biggl[ 2\Bigl[\sign(k_4) -E_1(\sqrt{3\tau_2}(k_4+b-a))\Bigr]\sign(2k_1+k_2+a)
 \non \\
&& +\sign(k_3)\, \sign(k_4) -E_2\!\left(\tfrac{1}{\sqrt{3}}\,;\sqrt{\tau_2} (2k_3-k_4+b-a), \sqrt{3\tau_2} (k_4+b-a)\right)
\\
&&+s(k,(0,a,0,0),\epsilon)\biggr] \, e^{-2\pi\I B(k,z)}
q^{-\tfrac12Q(k)}.
\non
\end{eqnarray}

We will now show that the contribution of the third line in (\ref{tR2}) equals $-\Theta(v)$.
We distinguish four cases, depending on whether $k_3=0$ or $\neq 0$, $k_4=0$ or $\neq 0$.
Using the symmetry (\ref{A22sym}), one verifies easily that the contribution for $k_3=k_4=0$ equals
\be
\label{cont1}
-\sum_{k\in \mathbb{Z}^4\atop k_3=k_4=0} e^{-2\pi\I B(k,z)}q^{-\tfrac12Q(k)}=-\Theta(v).
\ee
When $k_3=0$ and $k_4\neq 0$, the contribution is
\be
\label{cont2}
-\sum_{k\in \mathbb{Z}^4\atop k_3=0,\,k_4\neq 0} \Bigl[\sign(k_4+\epsilon)+\sign(k_2-k_1+a)\Bigr]\, e^{-2\pi\I B(k,z)}q^{-\tfrac12 Q(k)},
\ee
and similarly when $k_3\neq 0$ and $k_4=0$, the contribution is
\be
\label{cont3}
\sum_{k\in \mathbb{Z}^4\atop k_3\neq 0,\,k_4=0} \Bigl[\sign(k_3-1+\epsilon)+\sign(2k_1+k_2+a)\Bigr]\,
e^{-2\pi\I B(k,z)}q^{-\tfrac12Q(k)},
\ee
and when both $k_3,k_4\neq 0$, the contribution is zero.
Using again the symmetry (\ref{A22sym}) one can show that the sum of (\ref{cont2}) and (\ref{cont3}) vanishes.

Next we turn to the first line of (\ref{tR2}) and show that it naturally
factors in $\Theta(v)$ and other terms involving $\mu_{2,1}(u,v)$ and
$R_{1,j}(u,v)$ defined respectively in Definitions \ref{Defmus} and \ref{DefRs}.
We replace $\sign(2k_1+k_2+a)$ by $\sign(2k_1+k_2+a)+\sign(2k_3-k_4)-\sign(2k_3-k_4)$.
The terms multiplied by $\sign(2k_1+k_2+a)+\sign(2k_3-k_4)$ can be
resummed to the Appell function $\Theta(v)\,\mu_{2,1}(u,v)$.
To this end, we make the following shifts
\be
k_1\mapsto k_1+\tfrac{1}{2}k_4,\quad k_2\mapsto k_2-k_4,\quad k_3\mapsto k_3,\quad k_4\mapsto k_4,
\ee
such that these terms become
\be
\begin{split}
&2\sum_{k_2,k_3,k_4\in\mathbb{Z} \atop k_1+\frac{1}{2}k_4\in \mathbb{Z}}
\Bigl[\sign(2k_1+k_2+a)+\sign(2k_3-k_4)\Bigr]\,\Bigl[\sign(k_4)-E_1(\sqrt{3\tau_2}(k_4+b-a))\Bigr]
\\
&\times
e^{2\pi\I v(k_1+2k_2-\frac{3}{2}k_4)+2\pi\I u (k_3+k_4)}
 q^{k_1^2+k_2^2+k_1k_2+(2k_1+k_2)(k_3-\frac{1}{2}k_4)-\frac{3}{2}k_4^2}.
\end{split}
\ee
The sum over $k_3$ can now be carried out as a geometric sum. Distinguishing between $k_4$ even or odd,
the sum over $k_1$ and $k_2$ is proportional to either $\Theta(v)\,\mu_{2,1}(u,v)$ or $\Theta(v)\,(\mu_{2,1}(u,v-\tau)-\tfrac12)$.
Finally, use in the remaining term of the first line in (\ref{tR2}), i.e. the term multiplying $-\sign(2k_3-k_4)$,
the transformation
\be
\label{A22trafo}
k_1\mapsto k_1-k_3+k_4,\quad k_2\mapsto k_2-k_4, \quad k_3 \mapsto k_3,\quad k_4\mapsto k_4.
\ee
Then the sum over $k_1$ and $k_2$ can be split off as the theta function $\Theta(v)$.
We then obtain that the first line in (\ref{tR2}) equals
\begin{eqnarray}
&&4\Theta(v)\,\mu_{2,1}(u,v)\,R_{1,0}(u-v)
+4\Theta(v)\,e^{\pi \I (v-u)}q^{-\frac{1}{4}} \left(\mu_{2,1}(u,v-\tau) -\textstyle{\frac{1}{2}}\right) R_{1,1}(u-v)
\non \\
&&-2\Theta(v)\sum_{k_3,k_4\in \mathbb{Z}}
\Bigl[ \sign k_4 -E_1(\sqrt{3\tau_2}(k_4+b-a)) \Bigr]\sign(2k_3-k_4) \,  e^{2\pi\I(k_3+k_4)(u-v)}
q^{-k_3^2-k_4^2+k_3k_4}.
\non\\
\end{eqnarray}

Combining this result with the second line in (\ref{tR2}), we arrive at
\begin{eqnarray}
\tilde R(u,v)&=&4\Theta(v) \mu_{2,1}(u,v)\,R_{1,0}(u-v)
\non \\&&
+4\Theta(v)\,e^{\pi \I (v-u)}q^{-\frac{1}{4}} \left(\mu_{2,1}(u,v-\tau)-\textstyle{\frac{1}{2}}\right) R_{1,1}(u-v)
 \\
&&+\Theta(v)\,R_2(u-v)-\Theta(v)
\non
\end{eqnarray}
with $R_2(u)$ defined in (\ref{compR2}). After dividing by $\Theta(v)$ we arrive at the desired result for $R_{2,2}(u,v)$.

To discuss the modular properties of $\widehat A_{2,2}(u,v)$ we bring $\operatorname B$ into a block diagonal form using
the transformation (\ref{A22trafo}). The two blocks correspond each to a copy of the ${\rm A}_2$ root lattice,
\be
-\left(\begin{array}{cccc} 2 & 1 & 0 & 0 \\ 1 & 2 & 0 & 0\\ 0 & 0 & -2 & 1 \\ 0 & 0 & 1 & -2 \end{array} \right).
\ee
According to the general results of Section \ref{2n-2}, the modular properties of $\widehat A_{2,2}(u,v)$
are completely determined by this quadratic form and $\widehat \Phi_2$.
In particular, $\widehat A_{2,2}(u,v)$  has weight 2 and transforms under
the same congruence subgroup $\Gamma_0(3)$ as $\Theta(v)$, but with multiplier $\left(\frac{d}{3}\right)^2=1$.
Therefore, for $\scriptsize\left(\begin{array}{cc}\! a\! &\! b \!\\ \!c\! & \!d\! \end{array}\right)\in \Gamma_0(3)$
we have the following transformation for $\widehat A_{2,2}(u,v)$
\be
\widehat A_{2,2}\!\left(\frac{u}{c\tau+d},\frac{v}{c\tau+d},\frac{a\tau+b}{c\tau+d}\right)
=(c\tau+d)^2\, e^{-\pi \I \frac{c\,Q(z)}{c\tau+d}} \widehat A_{2,2}(u,v,\tau),
\ee
where $z=(0,u,v-u,v-u)$ as before. As a result, we obtain that $\widehat \mu_{2,2}(u,v)$
transforms as a two-variable Jacobi form of weight 1 under $\Gamma_0(3)$ with multiplier
$\left(\frac{d}{3}\right)$. The two-variable index follows from $-v^2-\tfrac12 Q(z)=-(u-v)^2$,
where $-v^2$ arises due to the division by $\Theta(v)$.
The matrix-valued index is therefore equal to $\scriptsize\left(\begin{array}{cc} \!-1\! & \!1\! \\ \!1\! & \!-1\!  \end{array} \right)$.

\end{proof}

\section{Extension to signature $(n_+,n_-)$ with $n_+>2$ \label{sec_gen}}

\subsection{General procedure}
The generalization of our construction to signature $(n_+,n_-)$ with $n_+>2$ is conceptually
straightforward. In analogy to \eqref{defE1int} and \eqref{defE2Cint}, for any set of linearly
independent  vectors $C_1,\dots C_{n_+}$ spanning a time-like $n_+$-dimensional
plane in $\IR^{n_+,n_-}$, we define the generalized error function $E_{n_+}(\{C_i\};x)$ on
$\IR^{n_+,n_-}$  by
\be
\label{defEn}
E_{n_+}(\{C_i\};x) := \int_{\left<C_1,\dots, C_{n_+}\right>}
\, \sign B(C_1,y) \cdots \sign B(C_{n_+},y) \,
e^{-\pi Q (y-x_+)}\, \de^{n_+} y\ ,
\ee
where $\de^{n_+} y$ is the uniform measure on the plane $\left<C_1,\dots, C_{n_+}\right>$ which is
normalized such that
$\int_{\left<C_1,\dots, C_{n_+}\right>} e^{-\pi Q(y)} \,  \de^{n_+} y=1$, and
$x_+$ is the orthogonal projection of $x$ on the same plane.
It is easy to
see\footnote{More generally, substituting the product of signs in \eqref{defEn} by
a locally polynomial function of $y$, homogeneous of degree $m$,  would
lead to a $C^\infty$  solution of Vign\'eras' equation with $\lambda=m$.}
 that \eqref{defEn} is a $C^\infty$  solution of Vign\'eras'
equation on $\IR^{n_+,n_-}$, which asymptotes to the locally constant function
$\prod_{i=1}^{n_+} \sign B(C_i,x)$ with exponential accuracy when all $|B(C_i,x)|\to\infty$.
More generally, in the asymptotic region where
$B(C_{n_1},x), \dots, B(C_{n_r},x)$ are kept finite while $|B(C_j,x)|\to\infty$ whenever
$j\neq \{n_1,\dots, n_r\}$, $E_{n_+}(\{C_i\};x)$ behaves as
\be
\label{Enlim}
E_{n_+}(\{C_i\};x) \sim E_r(\{C_{n_1},\dots, C_{n_r}\};x)
\prod_{j\neq \{n_1,\dots, n_r\}} \sign B(C_j,x)  ,
\ee
where $E_r$ is a generalized error function of lower order.

Similarly, in analogy with \eqref{defM1int} and \eqref{defM2Cint}, we define the complementary error function
$M_{n_+}(\{C_i\};x)$ by
\be
\label{defMn}
M_{n_+}(\{C_i\};x) := \left(\frac{\I}{\pi}\right)^{n_+}
\int_{\left<C_1,\dots, C_{n_+}\right>-\I x_+}  \frac{\sqrt{\Delta(\{C_i^\star\})} }
{B(C_1^\star,z)\cdots B(C_{n_+}^\star,z)}\,
e^{-\pi Q(z)-2\pi\I B(z,x)} \, \de^{n_+} z,
\ee
where $C_i^\star$ is the dual basis to $C_i$ in the  plane $\left<C_1,\dots, C_{n_+}\right>$ and
$\de^{n_+} z$ is the uniform measure on the same plane, normalized as
indicated below \eqref{defEn}.
It is easy to see\footnote{More generally, substituting the product of $1/B(C^\star_i,z)$ in
\eqref{defMn} by a rational function of $z$, homogeneous of degree $-m$,  would
lead to a solution of Vign\'eras' equation with $\lambda=m-n_+$.}
that $M_{n_+}(\{C_i\};x)$ is a solution of
Vign\'eras' equation with $\lambda=0$, $C^\infty$ away from the hyperplanes
$B(C_i^\star,x)=0$, exponentially \bp{decreasing}  away from the origin.

By Fourier transforming \eqref{defEn} over $y$ using the results of \cite{zbMATH01375120}
and deforming the contour of integration, one can in
principle relate $E_{n_+}(\{C_i\};x)$ to $M_{n_+}(\{C_i\};x)$. While we have  not carried out
this computation in detail,
evidence from the $n_+=1,2$ cases discussed above and $n_+=3$ discussed
below suggests that
\be
\label{EnMn}
E_{n_+}(\{C_i\};x)  = M_{n_+}(\{C_i\};x)+\cdots +
\prod_{i=1}^{n_+} \sign B(C_i,x),
\ee
where the dots denote linear combinations of $M_{r}$, $1\leq r<n_+$ with locally constant coefficients.
This decomposition generalizes \eqref{E2CCfromM2} to $n_+>2$.

Now we can use the functions $E_{n_+}$ to construct modular completions of indefinite theta series of signature $(n_+,n_-)$
as follows: for suitable choices of $n_+$-tuples $\{C_i\}$ and $\{C'_i\}$, which we shall not attempt
to characterize here, the locally constant function
\be
\label{prod2signg}
\Phi_{n_+}(x) :=
\frac1{2^{n_+}} \prod_{i=1}^{n_+} \Bigl[\sign B(C_i,x)-\sign B(C'_i,x) \Bigr]
\ee
defines a convergent holomorphic theta series $\vartheta_{\bfmu}[\Phi_{n_+},0]$,
whose modular completion is obtained by expanding out the product in \eqref{prod2signg} and replacing
each product of $n_+$ signs by the corresponding $E_{n_+}$ function.

\subsection{Triple error functions}

To illustrate this procedure, we now construct the special functions $E_3$ and
$M_3$ in some detail.
In a basis where the quadratic form $Q=x_1^2+x_2^2+x_3^2-\sum_{i=4\dots n} x_i^2$ is diagonal,
any three unit vectors $C_1,C_2,C_3$ spanning a time-like three-plane in $\IR^{3,n-3}$ can be rotated
via a $O(3,n-3)$ transformation
into
\be
\label{C123}
C_1 =\tfrac{1}{\sqrt{1+\alpha_2^2(1+\alpha_1^2)}}\begin{pmatrix}1\\\alpha_1 \alpha_2\\\alpha_2
\\ 0\\ \vdots\end{pmatrix},
\quad
C_2 =\tfrac{1}{\sqrt{1+\alpha_3^2(1+\alpha_2^2)}}\begin{pmatrix}\alpha_3 \\ 1\\ \alpha_2 \alpha_3
\\ 0\\ \vdots\end{pmatrix},
\quad
C_3=\tfrac{1}{\sqrt{1+\alpha_1^2(1+\alpha_3^2)}}\begin{pmatrix}\alpha_1 \alpha_3\\ \alpha_1\\ 1\\ 0\\ \vdots\end{pmatrix},
\ee
where $\alpha_1,\alpha_2,\alpha_3\in\IR$, $\alpha_1\alpha_2\alpha_3\neq 1$. Their
normalized dual basis is then
\be
\Cdtp=\tfrac{1}{\sqrt{1+\alpha_3^2}}\begin{pmatrix} 1 \\ -\alpha_3 \\ 0\\ 0\\ \vdots\end{pmatrix},
\qquad
\Ctup=\tfrac{1}{\sqrt{1+\alpha_1^2}}\begin{pmatrix} 0 \\ 1 \\ -\alpha_1\\ 0\\ \vdots\end{pmatrix},
\qquad
\Cudp=\tfrac{1}{\sqrt{1+\alpha_2^2}}\begin{pmatrix}-\alpha_2 \\ 0 \\1\\ 0\\ \vdots\end{pmatrix},
\ee
so that $\Cdtp$ points along the orthogonal projection of $C_1$ on the plane orthogonal to
$C_2$ and $C_3$.
For this choice of vectors, setting $u_i=x_i$ for $i=1,2,3$, the definitions \eqref{defEn} and \eqref{defMn}
specialize to
\be
\label{defE3}
\begin{split}
E_3(\alpha_i;u_i) =&   \int_{\IR^3}  e^{-\pi[ (u_1-y_1)^2+(u_2-y_2)^2+(u_3-y_3)^2]} \, 
\sign(y_1+\alpha_1\alpha_2 y_2+\alpha_2 y_3) 
\\  \times &
\sign(y_2+\alpha_2\alpha_3 y_3+\alpha_3 y_1) \,
\sign(y_3+\alpha_3\alpha_1 y_1+\alpha_1 y_2) \,\de y_1\, \de y_2 \,\de y_3 \, 
\end{split}
\ee
and
\be
\label{defM3}
M_3(\alpha_i;u_i)=\left(\frac{\I}{\pi}\right)^3 \int \frac{ |1-\alpha_1\alpha_2\alpha_3|\,
e^{-\pi (z_1^2+ z_2^2+z_3^2) -2\pi \I (u_1 z_1 + u_2 z_2+u_3 z_3)}
}{(z_1-\alpha_3 z_2)(z_2-\alpha_1 z_3)(z_3-\alpha_2 z_1)}\,
\, \de z_1\, \de z_2 \, \de z_3\,,
\ee
where $z_i$ is integrated over $\IR-\I  u_i$.
$E_3(\alpha_i;u_i) $ is a $C^\infty$ function
on $\IR^3$, while $M_3(\alpha_i;u_i) $ is a $C^\infty$ function on $\IR^3$ away from the loci
$u_1=\alpha_3 u_2$, $u_2=\alpha_1 u_3$, $u_3=\alpha_2 u_1$. When $\alpha_i=0$, so that
 the three vectors $C_i$ form an orthogonal basis,  $E_3$ and $M_3$ factorize into
 the product of three error functions,
\be
\label{M3E3zero}
E_3(0,0,0;u_i)  = E_1(u_1)\, E_1(u_2)\, E_1(u_3), \qquad
M_3(0,0,0;u_i)  = M_1(u_1)\, M_1(u_2)\, M_1(u_3).
\ee
Similarly, when $\alpha_2=\alpha_3=0$ so that $C_1$ is orthogonal to $C_2$ and $C_3$, $E_3$ and $M_3$
reduce to products of error and double error functions,
\be
\label{M3E3zero2}
E_3(\alpha_1,0,0;u_i)=E_1(u_1)\, E_2(\alpha_1; u_3,u_2),
\qquad
M_3(\alpha_1,0,0;u_i)=M_1(u_1)\, M_2(\alpha_1; u_3,u_2).
\ee
For arbitrary values of the $\alpha_i$'s, $E_3$ and $M_3$ are easily checked to
be solutions of Vign\'eras' equation  with $\lambda=0$ on $\IR^3$ and
$\IR^3 \backslash \{ (u_1-\alpha_3 u_2)(u_2-\alpha_1 u_3)(u_3-\alpha_2 u_1)=0\}$,
respectively. For $|u|\to\infty$ along
a fixed ray,
\be
\label{EM3largex}
\begin{split}
E_3(\alpha_i;u_i)\sim &\,
\sign(u_1+\alpha_1\alpha_2 u_2+\alpha_2 u_3) \,
\sign(u_2+\alpha_2\alpha_3 u_3+\alpha_3 u_1) \,
\sign(u_3+\alpha_3\alpha_1 u_1+\alpha_1 u_2),
\\
M_3(\alpha_i;u_i) \sim &\, - \frac{|1-\alpha_1\alpha_2\alpha_3|\, e^{-\pi(u_1^2+u_2^2+u_3^2)}}
{\pi^3 \, (u_1-\alpha_3 u_2)\, (u_2-\alpha_1 u_3)\, (u_3-\alpha_2 u_1)}\, .
\end{split}
\ee
Across the locus $u_1=\alpha_3 u_2$,  the function $M_3(\alpha_i;u_i)$ is discontinuous,
and behaves as
\be
\label{M3jump}
M_3(\alpha_i;u_i) \sim - \sign(u_1-\alpha_3 u_2)\,
M_2\!\left(
\tfrac{\alpha_1(1+\alpha_3^2)+\alpha_2\alpha_3}{(1-\alpha_1\alpha_2\alpha_3)\sqrt{1+\alpha_3^2}};
\tfrac{\alpha_3 u_1+u_2-\alpha_1(1+\alpha_3^2)u_3}{\sqrt{(1+\alpha_3^2)[1+\alpha_1^2(1+\alpha_3^2)]}},
\tfrac{\alpha_1\alpha_3 u_1+\alpha_1 u_2+u_3}{\sqrt{1+\alpha_1^2(1+\alpha_3^2)}}
\right).
\ee
The behavior across across the loci $u_2=\alpha_1 u_3$ and $u_3=\alpha_2 u_1$
is obtained by circular permutations.
By acting with the operator $\tfrac{\I}{4}\, u_i\partial_{u_i}$ on \eqref{defE3} and
using \eqref{defE2int}, one finds that the shadow of $E_3$ is given by
\be
\begin{split}
\label{shadowE3}
\tfrac{\I}{4}\, u_i\partial_{u_i}\, E_3(\alpha_i;u_i) =&\,
\tfrac{\I}{2}\, \tfrac{u_1+\alpha_2(\alpha_1 u_2+u_3)}{\sqrt{1+(1+\alpha_1^2)\alpha_2^2}}\,
e^{-\frac{\pi[u_1+\alpha_2(\alpha_1 u_2+u_3)]^2}{1+(1+\alpha_1^2)\alpha_2^2}}\,
\\ & \times
E_2\!\left( \tfrac{\alpha_1}{\sqrt{1+(1+\alpha_1^2)\alpha_2^2}};
\tfrac{u_3-\alpha_2 u_1}{\sqrt{1+\alpha_2^2}},
\tfrac{(1+\alpha_2^2)u_2-\alpha_1\alpha_2(u_1+\alpha_2 u_3)}
{\sqrt{(1+\alpha_2^2)(1+(1+\alpha_1^2)\alpha_2^2)}}\right) + {\rm circ.}
\end{split}
\ee
The shadow of $M_3$ is similarly given by \eqref{shadowE3} with $E_2$ replaced by $M_2$.

More generally, for any triplet of vectors
$C_i$ spanning a time-like three-plane in $\IR^{3,n-3}$,
the functions $E_3(C_1,C_2,C_3;x)$ and $M_3(C_1,C_2,C_3;x)$ are equal to
$E_3(\alpha_i;u_i(x))$ and  $M_3(\alpha_i;u_i(x))$ where
$\alpha_i$ are determined from the conditions
\be
\tfrac{B(C_i^\star,C_j^\star)}{\sqrt{Q(C_i^\star) Q(C_j^\star)}} = -\tfrac{\alpha_k}{\sqrt{(1+\alpha_i^2)(1+\alpha_k^2)}}
\ee
for $(ijk)$ a circular permutation of $(123)$, and the linear forms $u_i(x)$ are given by
\be
u_i(x) = \frac{1}{1-\alpha_1\alpha_2\alpha_3}\left[
\sqrt{1+\alpha_j^2(1+\alpha_i^2)}\, \frac{(C_i,x)}{\sqrt{Q(C_i)}}  - \alpha_j
\sqrt{1+\alpha_i^2(1+\alpha_k^2)}\, \frac{(C_k,x)}{\sqrt{Q(C_k)}} \right].
\ee
Indeed, these definitions are invariant under $O(3,n-3)$, and in the special case where the $C_i$'s
are chosen as in \eqref{C123}, they reduce to \eqref{defE3}, \eqref{defM3}.

By construction, $E_3(C_1,C_2,C_3;x)$ is a $C^\infty$ solution  of Vign\'eras' equation
with $\lambda=0$ on $\IR^{3,n-3}$ which asymptotes to $\sign(C_1,x)\, \sign(C_2,x)\,\sign(C_3,x)$
as $|x|\to\infty$, while  $M_3(C_1,C_2,C_3;x)$ is a $C^\infty$ solution  of Vign\'eras' equation
with $\lambda=0$ on $\IR^{3,n-3}\backslash \{ B(C_1^\star,x) B(C_2^\star,x)\ B(C_3^\star,x)=0\}$,
exponentially \bp{decreasing}  as $|x|\to\infty$.
The shadow \eqref{shadowE3} of
$E_3$ becomes
\be
\begin{split}
\label{shadowE3C}
\tfrac{\I}{4}\, x\partial_x\, E_3(C_1,C_2,C_3;x) =&\,
\tfrac{\I}{2}\, \tfrac{B(C_1,x)}{\sqrt{Q(C_1)}}\,
e^{-\pi \frac{B(C_1,x)^2}{Q(C_1)}}\,
E_2\!\left(\Cn_{2\perp1},\Cn_{3\perp1}; x\right) + {\rm circ.}\, ,
\end{split}
\ee
while \eqref{M3jump} translates into
$M_3(C_1,C_2,C_3;x) \sim -\sign B(C_1^\star,x)\, M_2(C_2,C_3;x)$
near the locus $B(C_1^\star,x)=0$.
We claim that the following precise version of \eqref{EnMn} holds,
\be
\label{defE32}
\begin{split}
E_3&(C_1,C_2,C_3;x)= M_3(C_1,C_2,C_3;x)
+ \sign B(\Cdtp,x)\, M_2(C_2,C_3;x) +  \sign B(\Ctup,x)\, M_2(C_3,C_1;x)
\\
 &+ \sign B(\Cudp,x)\, M_2(C_1,C_2;x)
+  \sign B(\Cn_{2\perp1},x)\, \sign B(\Cn_{3\perp1},x)\, M_1(C_1;x)
\\
&+  \sign B(\Cn_{3\perp2},x)\, \sign B(\Cn_{1\perp2},x)\, M_1(C_2;x)
+ \sign B(\Cn_{1\perp3},x)\, \sign B(\Cn_{2\perp3},x)\, M_1(C_3;x)
\\
&+  \sign B(C_1,x)\, \sign B(C_2,x)\, \sign B(C_3,x) .
\end{split}
\ee
Indeed, the right-hand side of \eqref{defE32}
is  smooth across the loci $B(\Cdtp,x)=0$, $B(\Ctup,x)=0$, $B(\Cudp,x)=0$,
and has the same shadow and asymptotics at large $|u|$ as $E_3(C_1,C_2,C_3;x)$.
Thus, these two functions must coincide.
Moreover, in the asymptotic region where $|B(C_3,x)|\to\infty$ keeping  $B(C_1,x)$ and $B(C_2,x)$ finite,
one has $B(C_{3\perp1},x)\sim B(C_{3\perp2},x)\sim B(C_3^\star,x)\sim B(C_3,x)$, so  \eqref{defE32}
reduces to
\be
E_3(C_1,C_2,C_3;x) \sim \sign B(C_3,x) \, E_2(C_1,C_2;x) ,
\ee
in agreement with \eqref{Enlim}. Similarly, in the region where
both
$|B(C_2,x)|$, $|B(C_3,x)|\to\infty$ keeping $B(C_1,x)$ finite,
one has  $B(C_{2\perp1},x)\sim B(C_2,x)$, $B(C_{3\perp1},x)\sim B(C_3,x)$ so \eqref{defE32}
reduces to
\be
E_3(C_1,C_2,C_3;x) \sim \sign B(C_2,x) \, \sign B(C_3,x) \, E_1(C_1;x),
\ee
again in agreement with \eqref{Enlim}.

Following the same strategy as in Section \ref{sec_confth},
the triple error functions $E_3$ and $M_3$ can be used to find the modular completion of any indefinite
theta series of the form $\vartheta_{\bfmu}[\Phi_{3},0]$ with
$\Phi_3(x)=\tfrac18 \prod_{i=1,2,3} \bigl[\sign B(C_i,x)-\sign B(C'_i,x) \bigr]$. We leave the
detailed study of such theta series for future work.


\begin{thebibliography}{10}

\bibitem{Alexandrov:2011va}
S.~Alexandrov.
\newblock {Twistor Approach to String Compactifications: a Review}.
\newblock {\em Phys.Rept.}, 522:1--57, 2013, arXiv:1111.2892.

\bibitem{Alexandrov:2016tnf}
S.~Alexandrov, S.~Banerjee, J.~Manschot, and B.~Pioline.
\newblock {Multiple D3-instantons and mock modular forms I}.
\newblock {\em  Commun.\ Math.\ Phys.}\  353:379-411, 2017,
\newblock arXiv:1605.05945.

\bibitem{abmp-to-appear}
S.~Alexandrov, S.~Banerjee, J.~Manschot, and B.~Pioline.
\newblock {Multiple D3-instantons and Mock Modular Forms II}.
\newblock arXiv:1702.0549.


\bibitem{Alexandrov:2013yva}
S.~Alexandrov, J.~Manschot, D.~Persson, and B.~Pioline.
\newblock {Quantum hypermultiplet moduli spaces in N=2 string vacua: a review}.
\newblock In {\em {Proceedings, String-Math 2012, Bonn, Germany, July 16-21,
  2012}}, pages 181--212, 2013, arXiv:1304.0766.

\bibitem{Alexandrov:2012au}
S.~Alexandrov, J.~Manschot, and B.~Pioline.
\newblock {D3-instantons, Mock Theta Series and Twistors}.
\newblock {\em JHEP}, 1304:002, 2013, arXiv:1207.1109.

\bibitem{Appell:1886}
P.~E. Appell.
\newblock {Sur les fonctions doublement p\'eriodique de troisi\`eme esp\`ece}.
\newblock {\em Annales scientifiques de l'E.N.S.}, pages 9--42, 1886.

\bibitem{0919.11036}
R.~E. Borcherds.
\newblock {Automorphic forms with singularities on Grassmannians.}
\newblock {\em Invent. Math.}, 132(3):491--562, 1998.

\bibitem{borwein1994}
J.~M. Borwein, P.~B. Borwein, and F.~G. Garvan.
\newblock Some cubic identities of {R}amanujan.
\newblock {\em Trans.\ Amer.\ Math.\ Soc.}, 343:35--47, 1994.

\bibitem{bringmann:2007}
K.~{Bringmann} and J.~{Lovejoy}.
\newblock {Overpartitions and class numbers of binary quadratic forms}.
\newblock {\em Proceedings of the National Academy of Sciences}, pnas-0900783106,
arXiv:0712.0631.

\bibitem{Bringmann:2010sd}
K.~Bringmann and J.~Manschot.
\newblock {From sheaves on $\mathbb{P}^2$ to a generalization of the Rademacher
  expansion}.
\newblock {\em A. J. of Math.}, 135:1039--1065, 2013, arXiv:1006.0915.

\bibitem{BMR2015}
K.~{Bringmann}, J.~{Manschot}, and L.~{Rolen}.
\newblock {Identities for generalized Appell functions and the blow-up
  formula}.
\newblock {\em Lett. Math. Phys.} 106:1379, 2016, arXiv:1510.00630.

\bibitem{BRZ2015}
K.~{Bringmann}, L.~{Rolen}, and S.~{Zwegers}.
\newblock {On the modularity of certain functions from the Gromov-Witten theory
  of elliptic orbifolds}.
\newblock {\em Royal Society Open Science}, 2(15):150310, Nov. 2015,
  arXiv:1506.07833.

\bibitem{zbMATH01375120}
M.~{Brion} and M.~{Vergne}.
\newblock {Arrangement of hyperplanes. I: Rational functions and Jeffrey-Kirwan
  residue.}
\newblock {\em {Ann. Sci. \'Ec. Norm. Sup\'er. (4)}}, 32(5):715--741, 1999.

\bibitem{Dabholkar:2012nd}
A.~Dabholkar, S.~Murthy, and D.~Zagier.
\newblock {Quantum Black Holes, Wall Crossing, and Mock Modular Forms}.
\newblock 2012, arXiv:1208.4074.

\bibitem{NIST:DLMF}
{NIST Digital Library of Mathematical Functions}.
\newblock http://dlmf.nist.gov/, Release 1.0.10 of 2015-08-07.
\newblock Online companion to \cite{Olver:2010:NHMF}.

\bibitem{Eguchi:1988af}
T.~Eguchi and A.~Taormina.
\newblock {On the Unitary Representations of $N=2$ and $N=4$ Superconformal
  Algebras}.
\newblock {\em Phys. Lett.}, B210:125--132, 1988.

\bibitem{MR781735}
M.~Eichler and D.~Zagier.
\newblock {\em The theory of {J}acobi forms}, volume~55 of {\em Progress in
  Mathematics}.
\newblock Birkh\"auser Boston Inc., Boston, MA, 1985.

\bibitem{FunkeKudla:2016}
J.~Funke and S.~Kudla.
\newblock{Theta integrals and generalized error functions, II.} 
\newblock{\em preprint}, arXiv:1708.02969.

\bibitem{Gottsche:1999ix}
L.~G{\"o}ttsche.
\newblock {Theta functions and Hodge numbers of moduli spaces of sheaves on
  rational surfaces}.
\newblock {\em Comm. Math. Phys.}, 206:105, 1999.

\bibitem{MR1623706}
L.~G{\"o}ttsche and D.~Zagier.
\newblock Jacobi forms and the structure of {D}onaldson invariants for
  {$4$}-manifolds with {$b_+=1$}.
\newblock {\em Selecta Math. (N.S.)}, 4(1):69--115, 1998.

\bibitem{zbMATH02591849}
E.~{Hecke}.
\newblock {\"Uber einen neuen Zusammenhang zwischen elliptischen
  Modulfunktionen und indefiniten quadratischen Formen.}
\newblock {\em {Nachr. Ges. Wiss. G\"ottingen, Math.-Phys. Kl.}}, 1925:35--44,
  1925.

\bibitem{zbMATH06587777}
K.~Hikami and J.~Lovejoy.
\newblock {Torus knots and quantum modular forms.}
\newblock{ \em Res. Math. Sci.} 2:15, 2015, arXiv:1409.6243.


\bibitem{Kac:2001}
V.~G. {Kac} and M.~{Wakimoto}.
\newblock {Integrable Highest Weight Modules over Affine Superalgebras and
  Appell's Function}.
\newblock {\em Communications in Mathematical Physics}, 215:631--682, 2001,
  arXiv:math-ph/0006007.

\bibitem{Kac:2013}
V.~G. {Kac} and M.~{Wakimoto}.
\newblock {Representations of affine superalgebras and mock theta functions}.
\newblock {\em ArXiv e-prints}, Aug. 2013, arXiv:1308.1261.

\bibitem{KudlaMillson}
S.~Kudla and J.~Millson.
\newblock{The theta correspondence and harmonic forms. I}
\newblock{\em Mathematische Annalen}, 274(3):353-378, 1986;
\newblock{The theta correspondence and harmonic forms. II}. 
\newblock{\em Mathematische Annalen}, 277(2):267-314, 1987.
\newblock{Intersection numbers of cycles on locally symmetric spaces and Fourier coefficients of holomorphic modular forms in several complex variables.}
\newblock{\em Publications Math\'ematiques de l'Institut des Hautes Etudes Scientifiques,} 71(1): 121-172, 1990.

\bibitem{Kudla:2016}
S.~Kudla.
\newblock{Theta integrals and generalized error functions}.
\newblock{\em Manuscripta Mathematica,} 155(3-4):303-333, 2018,
\newblock{arXiv:1608.03534}.

\bibitem{Lerch:1892}
M.~Lerch.
\newblock {Bemerkungen zur Theorie der elliptischen Funktionen}.
\newblock {\em Jahrbuch uber die Fortschritte der Mathematik}, 24:442--445,
  1892.

\bibitem{Manschot:2010nc}
  J.~Manschot,
  \newblock {The Betti numbers of the moduli space of stable sheaves of rank 3 on $\mathbb{P}^2$,}
  \newblock {\em Lett.\ Math.\ Phys. }  98:65, 2011, arXiv:1009.1775.

\bibitem{Manschot:2014cca}
J.~Manschot.
\newblock {Sheaves on $\mathbb{P}^2$ and generalized Appell
  functions,}   \newblock {\em Adv.\ Theor.\ Math.\ Phys.}  21:655, 2017, arXiv:1407.7785.

\bibitem{U3toappear}
  J.~Manschot,
  \newblock{Vafa-Witten theory and iterated integrals of modular forms}, 
  arXiv:1709.10098 [hep-th].

\bibitem{Mortenson:2016}
E.~T.~Mortenson,
\newblock {A double-sum Kronecker type identity}.
\newblock 2016, arXiv:1601.01913.

\bibitem{Nazaroglu:2016}
C.~Nazaroglu. 
\newblock{$ r $-Tuple Error Functions and Indefinite Theta Series of Higher-Depth}. 
\newblock{\em preprint},  arXiv:1609.01224.

\bibitem{Olver:2010:NHMF}
F.~W.~J. Olver, D.~W. Lozier, R.~F. Boisvert, and C.~W. Clark, editors.
\newblock {\em {NIST Handbook of Mathematical Functions}}.
\newblock Cambridge University Press, New York, NY, 2010.
\newblock Print companion to \cite{NIST:DLMF}.

\bibitem{Polishchuk:2001}
A.~{Polishchuk}.
\newblock {M. P. Appell's function and vector bundles of rank 2 on elliptic
  curve}.
\newblock {\em The Ramanujan Journal}, 5:111--128, 2001, arXiv:math/9810084.


\bibitem{Toda:2014}
Y.~Toda. 
\newblock {Generalized Donaldson-Thomas invariants on the local projective plane}.
\newblock 2014, arXiv:1405.3366.

\bibitem{Vafa:1994tf}
C.~Vafa and E.~Witten.
\newblock {A Strong coupling test of S duality}.
\newblock {\em Nucl.Phys.}, B431:3--77, 1994, arXiv:hep-th/9408074.

\bibitem{Vigneras:1977}
M.-F. Vign\'eras.
\newblock {S\'eries th\^eta des formes quadratiques ind\'efinies}.
\newblock {\em Springer Lecture Notes}, 627:227 -- 239, 1977.

\bibitem{WRaum:2012}
M.~Westerholt-Raum.
\newblock {H-Harmonic Maass-Jacobi Forms of Degree 1: The Analytic Theory of Some Indefinite Theta Series}.
\newblock {\em Res. Math. Sciences}, 2.12, 2015, arXiv:1207.5603.

\bibitem{Raum:2016}
M.~Westerholt-Raum.
\newblock{Indefinite theta series on tetrahedral cones}
\newblock{\em preprint}, arXiv:1608.08874.

\bibitem{Yoshioka:1994}
K.~Yoshioka.
\newblock {The Betti numbers of the moduli space of stable sheaves of rank 2 on
  $\mathbb{P}^2$}.
\newblock {\em J. Reine Angew. Math}, 453:193--220, 1994.

\bibitem{Zagier:1975}
D.~Zagier.
\newblock {Nombres de classes et formes modulaires de poids 3/2}.
\newblock {\em C. R. Acad. Sc. Paris}, 281:883--886, 1975.

\bibitem{MR2605321}
D.~Zagier.
\newblock Ramanujan's mock theta functions and their applications (after
  {Z}wegers and {O}no-{B}ringmann).
\newblock {\em Ast\'erisque}, (326):Exp. No. 986, vii--viii, 143--164 (2010),
  2009.
\newblock S{\'e}minaire Bourbaki. Vol. 2007/2008.

\bibitem{Zwegers-thesis}
S.~Zwegers.
\newblock Mock theta functions.
\newblock PhD dissertation, Utrecht University, 2002.

\bibitem{Zwegers:2009}
S.~Zwegers.
\newblock {On two fifth order mock theta functions}.
\newblock {\em Ramanujan J.}
 20 no. 2, 207Ð214, 2009.

\bibitem{Zwegers:2010}
S.~Zwegers.
\newblock {Multivariable Appell functions}.
\newblock {\em preprint}, 2010.



\end{thebibliography}

\end{document}